\pgfplotsset{every axis/.append style={
		axis x line=middle,    
		axis y line=middle,    
		axis line style={-,color=blue}, 
		xlabel={$x$},          
		ylabel={$y$},          
}}
\newcommand\enet[1]{\renewcommand\theenumi{#1} 
	\renewcommand\labelenumi{\theenumi}}
\DeclareMathOperator{\orb}{orb}
\DeclareMathOperator{\SF}{\Gamma_F}
\DeclareMathOperator{\SB}{\Gamma_S}
\def\cF{\mathcal{F}}
\def\A{\mathbb{A}}
\def\ZZ{\mathbb{Z}}
\def\CC{\mathbb{C}}
\def\DD{\mathbb{D}}
\def\FF{\mathbb{F}}
\def\PP{\mathbb{P}}
\def\rightmap#1{\smash{\mathop{\rightarrow}\limits^{#1}}}
\newcommand{\Conv}{\mathop{\scalebox{1.5}{\raisebox{-0.2ex}{$\ast$}}}}%
\newtheorem{thm}{Theorem}[section]  
\newtheorem{main-thm}{Theorem}  
\newtheorem{prop}{Proposition}[section]%
\newtheorem{proper}{Condition}[section]%
\newtheorem{main-conj}{Conjecture}[section]%
\newtheorem{cor}{Corollary}[section]
\newtheorem{lemma}{Lemma}[section]
\theoremstyle{remark}
\newtheorem{rem}{Remark}[section]
\theoremstyle{definition}
\newtheorem{dfn}{Definition}[section]
\newtheorem{exam}{Example}[section]
\let\c@lemma\c@thm
\let\c@prop\c@thm
\let\c@propdef\c@thm
\let\c@proper\c@thm
\let\c@problem\c@thm
\let\c@conj\c@thm
\let\c@cor\c@thm
\let\c@rem\c@thm
\let\c@dfn\c@thm
\let\c@notation\c@thm
\let\c@exam\c@thm
\title
[The fundamental group of the complement of a generic fiber-type curve]
{The fundamental group of the complement of a generic fiber-type curve}
\author[Jos\'e I. Cogolludo-Agust{\'i}n and Eva Elduque]{J.I.~Cogolludo-Agust{\'i}n and Eva Elduque}
\address{Departamento de Matem\'aticas, IUMA\\
	Universidad de Zaragoza \\
	C.~Pedro Cerbuna 12 \\
	50009 Zaragoza, Spain.} 
\email{jicogo@unizar.es} 
\address{Departamento de Matem\'aticas, ICMAT\\ 
	Universidad Aut\'onoma de Madrid \\
	28049 Madrid, Spain.}
\email{eva.elduque@uam.es}
\begin{document}
	
	\thanks{The authors are partially supported by PID2020-114750GB-C31, funded by 
		MCIN/AEI/10.13039/501100011033. The first author is also partially funded by the Departamento de Ciencia, 
		Universidad y Sociedad del Conocimiento of the Gobierno de Arag\'on 
		(Grupo de referencia E22\_20R ``\'Algebra y Geometr\'{\i}a''). 
		The second author is partially supported by the Ram\'on y Cajal Grant RYC2021-031526-I funded by MCIN/AEI/10.13039/501100011033 and by the European Union NextGenerationEU/PRTR
	}
	
	\subjclass[2020]{Primary 32S25, 32S20, 14F35, 14C21; Secondary 32S50, 20H10}
	
	\maketitle
	\begin{abstract}
	In this paper we describe and characterize the fundamental group of the complement to generic fiber-type curves,
	i.e. unions of (the closure of) finitely many generic fibers of a component-free pencil
	$F=[f:g]:\CC\PP^2\dashrightarrow \CC\PP^1$. We also observe that these groups have already appeared in the literature
	in examples by Eyral and Oka associated with curves of fiber-type. As a byproduct of our techniques we obtain
	a criterion to address Zariski's Problem on the commutativity of the fundamental group of the complement of
	a plane curve.
	\end{abstract}

	\section{Introduction}
	The present work is a continuation of a series of papers by the authors devoted to a particular class of divisors
	on surfaces called fiber-type curves. Fiber-type curves have been considered classically combining both topological
	and geometric perspectives
	(see~\cite{Shimada-fundamental,Arapura-fundamentalgroups,Arapura-geometry,ACM-characteristic,Bauer-irrational,Catanese-Fibred,Green-Lazarsfled-higher,Gromov-fundamental,Jost-Yau-Harmonic,Hillman-Complex}).
	In particular, we will focus on fiber-type curves on the complex projective
	plane $\PP^2$ that are a finite union of generic fibers. A precise definition is given as follows.

	\begin{dfn}[Plane fiber type curve in $\PP^2$]
	Let $C$ be a curve in the complex projective plane $\PP^2$. We say that a $C$ is a fiber-type
	curve if there exists a component-free pencil $F:\PP^2\dashrightarrow \PP^1$
	such that $C$ is the closure in $\PP^2$ of a finite number of fibers of~$F$.
	\end{dfn}

\begin{rem}\label{rem:Stein}
Note that, after Stein factorization, every fiber-type curve is a fiber-type curve associated with a
pencil $F : \PP^2\dashrightarrow \PP^1$ which has connected generic fibers (i.e., the
generic fibers of $F : \PP^2\setminus \mathcal B\rightarrow \PP^1$ are connected, where $\mathcal B$
is the finite set of base points of the pencil~$F$).
\end{rem}

	Even though any plane curve can be considered as a fiber-type curve by taking any line in the projective space
	of curves of a given degree, not every curve can be a union of generic members of a component-free pencil.
	We will refer to such curves as \emph{generic fiber-type curves} (see~\ref{sec:admissible}).
	There are strong topological restrictions
	in terms of the fundamental group of the complement of a generic fiber-type curve. A key role is played by the
	existence of multiple fibers in this pencil.

\begin{rem}\label{rem:2mult}
By \cite[Proposition 2.8]{ji-Libgober-mw}, the number of multiple fibers of a pencil
$F:\PP^2\dashrightarrow\PP^1$ with connected generic fibers is at most $2$. Hence, after a change
of coordinates in $\PP^1$, we may always assume that the multiple fibers of any pencil with connected
generic fibers lie over a subset of $\{[0:1],[1:0]\}$, and that the multiplicity of the fiber over
$[1:0]$ is greater or equal than the multiplicity of the fiber over~$[0:1]$.
\end{rem}

In light of Remarks~\ref{rem:Stein} and~\ref{rem:2mult}, every plane fiber-type curve is a fiber-type
curve associated to a pencil $F : \PP^2\dashrightarrow \PP^1$ satisfying the following condition:
\begin{proper}\label{cond}
$F : \PP^2\dashrightarrow \PP^1$ is of the form $F=[f_{kp}^q:f_{kq}^p]$ for some $p,q,k\in\ZZ_{>0}$
with $p\geq q$, where
\begin{itemize}
\item $f_{kp}$ and $f_{kq}$ are coprime homogeneous polynomials in $\CC[x,y,z]$ of degrees $kp$ and
$kq$ respectively.
\item
Neither $f_{kp}$ nor $f_{kq}$ is an $l$-th power of another polynomial in $\CC[x,y,z]$ for any $l\geq 2$.
\item
$F$ has connected generic fibers, so $\gcd(p,q)=1$.
\item
$F$ has no multiple fibers outside $\{[0:1],[1:0]\}\subset\PP^1$
\end{itemize}
\end{proper}

	
	Given a pencil $F$ and a point $P\in\PP^1$, we denote $C_P:=\overline{F^{-1}(P)}\subset \PP^2,$
	and, for any finite set $B\subset \PP^1$, we use $C_B:=\bigcup_{P\in B} C_P$ to denote the fiber-type curve
	corresponding to $F$ and~$B$.

	The original motivation for this paper comes from \cite{Oka-genericRjoin}, where Oka defines the following family
	of groups which will appear as fundamental groups of complements of some particular cases of generic fiber-type curves
	(see also Eyral-Oka~\cite{Eyral-Oka-RjoinType, Eyral-Oka-RjoinTypeII} for more examples where the genericity condition
	is relaxed).

\begin{dfn}\label{def:Gpqr}
	Let $p$, $q$ and $r$ be positive integers. \emph{Oka's group} $G(p;q;r)$ is given by the following presentation:
	\begin{equation}\label{eq:presGpqr}
		G(p;q;r)=
		\langle \omega, a_i (i\in\ZZ): \omega=a_{p-1}\cdot a_{p-2}\cdot\ldots \cdot a_1\cdot a_0, \omega^r=e, R_1, R_2\rangle,
	\end{equation}
	where $R_1$ are the (periodicity) relations of the form $a_i=a_{i+q}$ for all $i\in\ZZ$, and $R_2$ are the (conjugacy) relations of the form $a_{i+p}=\omega a_i\omega^{-1}$ for all $i\in\ZZ$.
\end{dfn}



\begin{exam}[Cor. 1.4 in \cite{Oka-genericRjoin}]\label{ex:Oka}
	Let $r\geq 0$. Let $G$ be the fundamental group of the complement of
	a fiber-type curve $C=\cup_{k=0}^{r} C_{[1:\omega^k]}$ associated to the pencil $F=[f:g]$, where
	\begin{itemize}
		\item $\omega$ is a primitive $(r+1)$-th root of unity,
		\item $d=\deg(f)=\deg(g)$,
		\item $f=\prod_{j=1}^{l_1} (x-\beta_jz)^{m_{1j}}$ and $g=\prod_{i=1}^{l_2} (y-\alpha_iz)^{m_{2i}}$,
		where $\alpha_1,\ldots,\alpha_{l_2},\beta_1,\ldots,\beta_{l_1}$ are mutually distinct complex numbers,
		\item $m_1:=\gcd(m_{1j})$ and $m_2:=\gcd(m_{2i})$ are coprime, and
		\item the curve $C$ is a union of generic members of the pencil $F$.
	\end{itemize}
	Under those conditions, Oka proves that $G\cong G\left((r+1)m_1;(r+1)m_2;\frac{d}{m_1}\right)$.
%
\end{exam}

%

Also in~\cite{Oka-genericRjoin}, Oka shows that $G(p;q;r)$ is a central extension of a free product of cyclic groups
by a finite cyclic group (see Proposition~\ref{prop:center}).
In fact, some finitely generated free products of cyclic groups appear as examples of Oka's groups, namely,
$G(sp;sq;q)\cong \FF_{s-1}*\ZZ_p*\ZZ_q$ for all $p,q,s\in \ZZ_{>0}$ such that $\gcd(p,q)=1$.
Oka's Example~\ref{ex:Oka} generalizes his own celebrated work~\cite[\S8]{Oka-some-plane-curves}
(also see Dimca~\cite[Prop. \S4(4.16)]{Dimca-singularities} and Némethi~\cite{Nemethi-fundamental}).
Examples~\ref{ex:Oka} and~\ref{ex:Okapq} are both from the 70's.

\begin{exam}[{\cite[\S8]{Oka-some-plane-curves}}]\label{ex:Okapq}
Let $p,q>1$ be coprime integers, and let $C_{p,q}$ be the irreducible curve in $\PP^2$ defined as
\begin{equation}
\label{eq:cpq}
C_{p,q}:=V\left((x^p+y^p)^q+(y^q+z^q)^p=0\right),
\end{equation}
which is the closure of the fiber over $[1:-1]$ of the pencil $F=[(x^p+y^p)^q:(y^q+z^q)^p]$.
Then, $\pi_1(\PP^2\setminus C_{p,q})\cong\ZZ_p*\ZZ_q$.
\end{exam}

Other results about curves whose fundamental group of their complement is a free product can be found in~\cite{Oka-topology}.

Our first contribution in this direction is given in Proposition~\ref{prop:GpqrImpliesFiber}, where we show that
any curve whose fundamental group of its complement is one of Oka's groups must be of fiber-type, one
irreducible component per fiber. However, the pencil $F=[f:g]$ used might not be such that $f$ and $g$ are completely
reducible fibers (products of powers of degree-one polynomials).

The main result of this paper computes the fundamental group of any generic fiber-type curve in $\PP^2$.
In particular, we obtain that such fundamental groups are Oka's groups from Definition~\ref{def:Gpqr}.

\begin{thm}[Main Theorem]\label{thm:newgeneric}
	Let $F : \PP^2\dashrightarrow \PP^1$ be as in Condition~\ref{cond}. Let $B\subset \PP^1\setminus B_F$
	be a finite non-empty set, and let $s:=\#B$. Then,
	$$\pi_1(\PP^2\setminus C_B)\cong G\left(sp;sq;kq\right).$$
	Moreover, assume that $V(f_{kp})$ (resp. $V(f_{kq})$) is irreducible and $\pi_1(\PP^2\setminus V(f_{kp}))\cong \ZZ_{kp}$ (resp. $\pi_1(\PP^2\setminus V(f_{kq}))\cong \ZZ_{kq}$). Then 
	$$\pi_1\left(\PP^2\setminus (C_B\cup V(f_{kp}))\right)\cong G\left((s+1)p;(s+1); k\right)$$
	(resp.
$
\pi_1\left(\PP^2\setminus (C_B\cup V(f_{kq}))\right)\cong  G\left((s+1)q;s+1;k\right)
$).
\end{thm}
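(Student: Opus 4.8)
The plan is to compute $\pi_1(\PP^2\setminus C_B)$ from the orbifold fibration attached to the pencil, and then read off Oka's presentation. Since $B\subset\PP^1\setminus B_F$ is nonempty and every fibre of $F$ passes through the base locus $\cB$, we have $\cB\subset C_B$ and $C_B\setminus\cB=F^{-1}(B)$, so that $F$ restricts to a surjection $F\colon\PP^2\setminus C_B\to\PP^1\setminus B=:T$. First I would resolve the base points so that $F$ becomes a morphism on a blow-up of $\PP^2$, where the exceptional divisors provide (multi)sections of the fibration; by Condition~\ref{cond} the only multiple fibres lie over $B_F=\{[1:0],[0:1]\}$ with multiplicities exactly $p$ and $q$ (the non-power hypothesis on $f_{kp},f_{kq}$ forbids larger multiplicities). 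This realises $F$ as an orbifold fibration over the base orbifold $\cT$, namely $T$ with cone points of orders $p$ and $q$ at $[1:0]$ and $[0:1]$; its generic fibre $\Phi$ is a smooth connected affine curve, connectedness coming from $\gcd(p,q)=1$ as in Remark~\ref{rem:Stein}.

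The backbone is then the homotopy exact sequence of this orbifold fibration,
\[
\pi_1(\Phi)\xrightarrow{\ \iota\ }\pi_1(\PP^2\setminus C_B)\xrightarrow{\ F_*\ }\pi_1^{\mathrm{orb}}(\cT)\to 1,
\]
with $\ker F_*=\im\iota$. The quotient is computed directly: puncturing $\PP^1$ at the $s$ points of $B$ and adjoining the two cone relations gives
\[
\pi_1^{\mathrm{orb}}(\cT)=\langle\,\mu_1,\dots,\mu_s,\delta_0,\delta_1\mid \mu_1\cdots\mu_s\,\delta_0\delta_1=1,\ \delta_0^{\,p}=\delta_1^{\,q}=1\,\rangle\cong\FF_{s-1}*\ZZ_p*\ZZ_q,
\]
which is exactly the Oka group $G(sp;sq;q)$ by the isomorphism recorded after Example~\ref{ex:Oka}. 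It remains to identify $\im\iota$ and the extension, and here I would run a Zariski--van Kampen analysis of the pencil: the meridians of the branches of the generic fibre around a base point, unrolled along the monodromy orbit, furnish the $\ZZ$-indexed generators $a_i$; one full loop of the monodromy yields the conjugacy relations $R_2$, the second multiple fibre yields the periodicity $R_1$, and $\omega=a_{sp-1}\cdots a_0$ is the resulting fibre meridian. The multiplicity-$q$ fibre over $[0:1]$ together with the degree-$k$ structure forces $\omega^{kq}=e$ and nothing stronger, so that $\im\iota$ is the central cyclic subgroup $\langle\omega^{q}\rangle\cong\ZZ_k$ (centrality and order as in Proposition~\ref{prop:center}). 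This produces the central extension
\[
1\to\ZZ_k\to\pi_1(\PP^2\setminus C_B)\to G(sp;sq;q)\to 1,
\]
matching Definition~\ref{def:Gpqr} term by term and giving $\pi_1(\PP^2\setminus C_B)\cong G(sp;sq;kq)$.

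For the ``moreover'' part I would apply the same method to $C_B\cup V(f_{kp})$, now additionally removing the reduced multiple fibre over $[0:1]$. The base becomes $\PP^1\setminus(B\cup\{[0:1]\})$, with $s+1$ punctures and a single cone point of order $p$ at $[1:0]$, so that $\pi_1^{\mathrm{orb}}$ is $\FF_s*\ZZ_p\cong G((s+1)p;s+1;1)$. The hypothesis that $V(f_{kp})$ is irreducible with $\pi_1(\PP^2\setminus V(f_{kp}))\cong\ZZ_{kp}$ is precisely what controls the removed fibre: it guarantees that $V(f_{kp})$ contributes a single meridian with cyclic local group, so that the order-$q$ cone point at $[0:1]$ is converted into a clean puncture with no extra generators or relations, while the central $\ZZ_k$ from the generic fibre survives. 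Assembling as before gives $1\to\ZZ_k\to\pi_1(\PP^2\setminus(C_B\cup V(f_{kp})))\to G((s+1)p;s+1;1)\to 1$, i.e.\ $G((s+1)p;s+1;k)$; the symmetric computation with $V(f_{kq})$ yields $G((s+1)q;s+1;k)$.

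The main obstacle is the identification of $\ker F_*=\im\iota$ in the second step: showing it is central and cyclic of order exactly $k$, and matching the pencil monodromy with the precise relations $R_1,R_2$ of Definition~\ref{def:Gpqr}. This is where the local models $t\mapsto t^{p}$ and $t\mapsto t^{q}$ of $F$ near the multiple fibres, and the behaviour of the fibre branches at the base points, must be analysed carefully. For the ``moreover'' statement the analogous difficulty is to deduce from the cyclic-complement hypothesis that deleting $V(f_{kp})$ genuinely replaces the cone point by a puncture without disturbing the central kernel, rather than introducing additional vertical monodromy.
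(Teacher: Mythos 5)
Your skeleton---the exact sequence $\pi_1(\Phi)\to\pi_1(\PP^2\setminus C_B)\xrightarrow{F_*}\pi_1^{\orb}\to 1$ of Lemma~\ref{lem:exact} and the identification $\pi_1^{\orb}\cong\FF_{s-1}*\ZZ_p*\ZZ_q$---matches the paper's starting point, but there is a genuine gap exactly where you place ``the main obstacle'': the identification of $\ker F_*$ as a central $\ZZ_k$ and the derivation of Oka's relations are deferred to an unexecuted Zariski--van Kampen/braid monodromy analysis. This is precisely the computation the paper is designed to avoid, and it is not feasible in the present generality: for an arbitrary pencil satisfying Condition~\ref{cond}, the curves $V(f_{kp})$, $V(f_{kq})$ and the atypical fibers over $B_F$ can have arbitrary singularities, and the braid monodromy is not computable; Oka's derivation of $R_1$, $R_2$ and $\omega^{kq}=e$ worked only for join-type pencils whose special fibers are unions of lines. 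Your predicted dictionary (monodromy $\leadsto R_2$, second multiple fiber $\leadsto R_1$, ``$\omega^{kq}=e$ and nothing stronger'') is a pattern-match to that special case, not an argument---in general ZvK also imposes relations coming from every atypical fiber in $B_F$, and the claim that the kernel has order exactly $k$ (rather than a proper divisor of $k$) is the whole difficulty. Note also that invoking Proposition~\ref{prop:center} for centrality is circular: it is a statement about the abstract group $G(p;q;r)$, usable only after the isomorphism you are trying to prove is established.

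What is missing, compared with the paper, are the three tools that replace the monodromy computation. First, even if you proved the central extension $1\to\ZZ_k\to\pi_1(\PP^2\setminus C_B)\to\FF_{s-1}*\ZZ_p*\ZZ_q\to 1$, that alone does not finish: a central extension is not determined by its kernel and quotient. The paper's key new ingredient is the characterization Theorem~\ref{thm:OkaUnique}, which says the extension \emph{together with} the abelianization $\ZZ_{kpq}\times\ZZ^{s-1}$ (supplied for free by Lemma~\ref{lem:coprime} from the degrees) pins down the group as $G(sp;sq;kq)$, so no presentation of $\pi_1$ ever needs to be computed. Second, the base case $\#B=1$ (Theorem~\ref{thm:1fiber}, via Lemmas~\ref{lem:purelycentral} and~\ref{lem:genericFiberNoMultiple}) nails the kernel by a two-sided squeeze: it is a quotient of $\ZZ_k$ by the presentations of Lemmas~\ref{lem:kernel1fiber}--\ref{lem:presentation1fiber}, and it surjects onto $\ZZ_k$ because the generic fiber has degree $kpq$ while $H_1\cong\ZZ_{kpq}$. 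Third, the general case is an induction on $\#B$ using the Addition Theorem~\ref{thm:additiongeneral}, which transports the purely central $\ZZ_k$ kernel when a generic fiber is added; the \emph{moreover} part is the same induction whose base case $B=\emptyset$ is exactly where the hypothesis $\pi_1(\PP^2\setminus V(f_{kp}))\cong\ZZ_{kp}$ enters (it gives a purely central kernel $\ZZ_k$ of $\ZZ_{kp}\twoheadrightarrow\ZZ_p$), rather than your informal ``cone point becomes a clean puncture'' picture. Without these three steps---or a completed monodromy computation, which is out of reach here---the proposal does not constitute a proof.
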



It is worth mentioning that the proof of this result is more conceptual and avoids the braid monodromy and Zariski-Van
Kampen calculations needed in~\cite{Oka-genericRjoin, Eyral-Oka-RjoinTypeII}, which gives a new insight into these results.

As for the \emph{moreover} hypothesis $G=\pi_1(\PP^2\setminus V(f_{kp}))\cong \ZZ_{kp}$ in Theorem~\ref{thm:newgeneric},
this is equivalent to $V(f_{kp})$ being irreducible and $G$ abelian (see Lemma~\ref{lem:coprime}).
Finding a geometric criterion for the fundamental group of a curve complement to be abelian is known as
Zariski's Problem (see~\cite[\S2]{Libgober-complements} for a more detailed approach to this problem).
For instance smooth and nodal curves have complements with an abelian fundamental group
(see~\cite{Zariski-problem,Cheniot,Deligne-groupe}). Other criteria include the existence of high order flexes
(see~\cite{Zariski-problem}),
the existence of points of high multiplicity (see~\cite[Cor. \S4(3.8)]{Dimca-singularities}),
the positive self intersection of a resolution of the curve
(see~\cite{Nori-Zariski,Libgober-complements,Abhyankar-Zariski-algebraic}),
other non-negativity conditions (see~\cite{Orevkov-fundamental,Orevkov-commutant}),
or based on $\mu$-constant determinacy numbers (see~\cite[Cor. \S4(3.11)]{Dimca-singularities}).

As a consequence of the Main Theorem, we also obtain a contribution to Zariski's Problem.

\begin{cor}\label{cor:1or0multipleAbelianintro}
	If an irreducible curve $C$ is the closure of a generic fiber of a pencil $F:\PP^2\dashrightarrow \PP^1$ of
	degree $d$ with at most one multiple fiber, then $\pi_1(\PP^2\setminus C)\cong\ZZ_d$.
\end{cor}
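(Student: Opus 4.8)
The plan is to derive this directly from the Main Theorem (Theorem~\ref{thm:newgeneric}) after normalizing the pencil. First I would observe that, since $C$ is irreducible, the closure of a generic fiber of $F$ is irreducible, so the generic fibers of $F:\PP^2\setminus\mathcal B\to\PP^1$ are connected; thus Stein factorization changes nothing (Remark~\ref{rem:Stein}), and by Remark~\ref{rem:2mult} a change of coordinates on $\PP^1$ places the (at most two) multiple fibers over a subset of $\{[0:1],[1:0]\}$ with the larger multiplicity over $[1:0]$. Consequently $F$ can be written in the form of Condition~\ref{cond}, namely $F=[f_{kp}^q:f_{kq}^p]$ with $p\geq q$, $\gcd(p,q)=1$, and both $f_{kp}$, $f_{kq}$ free of nontrivial power factors. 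In this normal form a generic fiber $C=C_P$ has degree $\deg(f_{kp}^q)=kpq$, so $d=kpq$.

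Next I would translate the hypothesis ``at most one multiple fiber'' into a condition on $p$ and $q$. By the normalization of Condition~\ref{cond} (in particular the absence of nontrivial power factors in $f_{kp}$ and $f_{kq}$), the fiber over $[0:1]$ has multiplicity $q$ and the fiber over $[1:0]$ has multiplicity $p$, and these are the only candidates for multiple fibers. Since $p\geq q\geq 1$, having $q\geq 2$ would force $p\geq 2$ and hence two multiple fibers; therefore ``at most one multiple fiber'' is equivalent to $q=1$ (and then the unique possible multiple fiber is $V(f_{kq})$ over $[1:0]$, of multiplicity $p$).

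Finally I would apply Theorem~\ref{thm:newgeneric} to the single generic fiber $C=C_{\{P\}}$, i.e.\ with $s=\#B=1$ and $P\notin B_F$, obtaining $\pi_1(\PP^2\setminus C)\cong G(p;1;k)$, and then identify this group. In the presentation of Definition~\ref{def:Gpqr} with $q=1$, the periodicity relations $R_1$ read $a_i=a_{i+1}$ for all $i\in\ZZ$ and so collapse all generators to a single $a:=a_0$; then $\omega=a_{p-1}\cdots a_0=a^p$, the conjugacy relations $R_2$ become the tautology $a=\omega a\omega^{-1}$, and $\omega^k=e$ reads $a^{pk}=e$. Hence $G(p;1;k)\cong\langle a\mid a^{pk}=e\rangle\cong\ZZ_{pk}=\ZZ_d$, as claimed.

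I expect the argument to be essentially formal once the Main Theorem is in hand: the only genuine input is Theorem~\ref{thm:newgeneric}, and the remaining steps are bookkeeping. The mildest point requiring care is the reduction to Condition~\ref{cond} together with the identification of the fiber multiplicities, but both are supplied by Remarks~\ref{rem:Stein} and~\ref{rem:2mult}; in particular no braid-monodromy or Zariski--Van Kampen computation is needed.
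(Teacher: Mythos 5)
Your proof is correct and follows essentially the same route as the paper: the corollary is read off from the paper's computation of $\pi_1(\PP^2\setminus C_P)$ for a single generic fiber of a pencil normalized as in Condition~\ref{cond}, with ``at most one multiple fiber'' translated into $q=1$. The paper states this as an immediate consequence of Lemmas~\ref{lem:coprime}, \ref{lem:purelycentral} and~\ref{lem:genericFiberNoMultiple} (which are exactly the two cases underlying the $s=1$ instance of Theorem~\ref{thm:newgeneric}), so where you invoke the Main Theorem and then collapse the presentation of $G(p;1;k)$ to $\ZZ_{pk}$, the paper obtains cyclicity directly from those lemmas and pins down the order via Lemma~\ref{lem:coprime} --- the same content in slightly different packaging.
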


Using this result one can produce infinitely many curves with complicated singularities, whose fundamental
groups of their complements are abelian and yet are not detected by the previously
known criteria for Zariski's Problem.

The paper is structured as follows. In Section~\ref{s:preliminaries} we give some preliminaries regarding fundamental
groups of curve complements and orbifold fundamental groups in a more general setting. In particular, in
Section~\ref{sec:fundamentalgroupsfiber}, we let $F:U\to S$ be a morphism between a smooth quasi-projective
variety and a projective curve, and consider fiber-type hypersurface complements of the form $U\setminus F^{-1}(B)$
for a finite set $B\subset S$.
This directly generalizes the current setting: If $F:U=\PP^2\setminus\mathcal B\to \PP^1$ is a non-constant pencil defined in
its maximal domain of definition, then $U\setminus F^{-1}(B)=\PP^2\setminus C_B$. In Section~\ref{sec:fundamentalgroupsfiber},
we develop group-theoretical methods to describe fundamental groups of generic fiber-type hypersurface
complements in the general setting of quasi-projective varieties such as Addition and Deletion Lemmas.

Section~\ref{s:Okagroups} studies Oka's groups from Definition~\ref{def:Gpqr}.
In Theorem~\ref{thm:OkaUnique}, we give a simpler finite presentation for Oka's groups than the one given
in Definition~\ref{def:Gpqr} and use it to give a characterization of such groups
which will allow us to identify them in different contexts and solve the isomorphism problem for the infinite
family of Oka's groups in Corollary~\ref{cor:isoGpqr}.

Finally, the Main Theorem~\ref{thm:newgeneric} is proved in Section~\ref{s:main}, describing
$\pi_1(\PP^2\setminus C)\cong G(p;q;r)$ for any generic fiber-type curve $C$ for certain non-zero integers $p,q,r$,
in particular, $\pi_1(\PP^2\setminus C)$ is a central extension of a free product of cyclic groups.
As a consequence of this, we obtain Corollary~\ref{cor:1or0multipleAbelianintro}.
The famous example of the equisingular stratum of sextics with an $\A_{17}$ singularity is discussed in
Example~\ref{exam:a17} using the techniques developed in this paper. We also improve Dimca's criterion for abelian
fundamental groups in the context of Zariski's Problem using $\mu$-constant determinacy in Example~\ref{exam:muconstant}.
Finally, we observe in Theorem~\ref{thm:additionPlane} that our Main Theorem~\ref{thm:newgeneric} can be extended
under some circumstances to non-generic fiber-type curves.

\subsection*{Acknowledgments}
The authors would like to thank Moisés Herradón Cueto for useful conversations and Andrei Jaikin-Zapirain for his help
with simplifying the proof of Theorem~\ref{thm:OkaUnique}.

\section{Preliminaries}\label{s:preliminaries}

\subsection{Admissible maps}\label{sec:admissible}
In this section, we talk about a generalized notion of the pencils satisfying Condition~\ref{cond}, and set notation
for the rest of the paper.

Following Arapura~\cite{Arapura-geometry}, we call a surjective algebraic morphism $F:U\to S$ from a smooth connected 
quasi-projective variety $U$ to a smooth connected quasi-projective curve $S$ \textit{admissible} if it has connected
generic fibers.

The minimal set of values $B_F$ for which 
$F:U\setminus F^{-1}(B_F)\to S\setminus B_F$ is a locally trivial fibration is finite~\cite{Thom-ensembles}. 
The points in $B_F$ are called \emph{atypical} values of $F:U\to S$. 
A fiber $F^{-1}(P)$ is called \emph{generic} if $P\notin B_F$.
We will distinguish between $F^{*}(P)$ as the pulled-back divisor and $C_P$ as 
its reduced structure. Using this notation, one can describe the set of multiple fibers as
\begin{equation}
\label{eq:multiple}
M_F=\{P\in S\mid F^{*}(P)=mD, m>1, \textrm{ for some effective divisor } D\}\subset B_F.
\end{equation}
Note that in general, the effective divisor $D$ in~\eqref{eq:multiple} need not be reduced. If $P\in S$, the 
\emph{multiplicity} 
of $F^{*}(P)$ is defined as $m\geq 1$ if $F^{*}(P)=mD$ for some $D$ and whenever $F^{*}(P)=m'D'$, then~$m'\leq m$.

If $F:\PP^2\dashrightarrow \PP^1$ is a component-free pencil, the definition of $B_F$, $M_F$, and generic fiber is
done using the induced map $F:\PP^2\setminus \mathcal B\to \PP^1$, where $\mathcal B$ is the finite set of base
points of the pencil.

\subsection{Fundamental groups of curve complements}\label{sec:fundamentalgroups}

\begin{dfn}[Meridian]
Let $X$ be a smooth quasi-projective variety and let $C=\cup_{i\in I} C_i$ be a curve in $X$, where $C_i$ are its 
irreducible components. Let $p\in X\setminus C$ be a base point. A positively (resp. negatively) oriented meridian around $C_i$ based at $p$ is the class in $\pi_1(X\setminus C,p)$ of a loop $\delta_i \star \hat\gamma_i \star \bar \delta_i$, where $\delta_i$ and $\hat\gamma_i$ are as follows:
\begin{itemize}
\item $p_i$ is a regular point on $C_i$,
\item $\DD_i\subset X$ is a closed disk centered at $p_i$ which is transversal to $C_i$ at $p_i$ such that $\DD_i\cap C=\{p_i\}$,
\item $\hat p_i$ is a point in the boundary of $\DD_i$,
\item $\hat\gamma_i$ is a loop based at $\hat p_i$ around the boundary of $\partial \DD_i$ travelled in the positive (resp. negative) orientation, and
\item $\delta_i:[0,1]\to X\setminus C$ is a path in $X\setminus C$ starting at the base point $\delta_i(0)=p$ and ending at $\delta_i(1)=\hat p_i$.
\end{itemize}
\end{dfn}

The following three results are well known.

\begin{lemma}
\label{lemma:meridians-conjugated}
Let $\gamma$ be a positively oriented meridian around $C_i$ based at $p$. Then, $\gamma'\in\pi_1(X\setminus C,p)$ is a positively oriented meridian around $C_i$ if and 
only if $\gamma'$ is in the conjugacy class of $\gamma$ in $\pi_1(X\setminus C,p)$.
\end{lemma}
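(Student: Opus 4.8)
The plan is to prove that the positively oriented meridians around $C_i$ based at $p$ form exactly one conjugacy class in $G:=\pi_1(X\setminus C,p)$; the two implications in the statement are then ``a conjugate of a meridian is again a meridian'' and its converse ``any two positively oriented meridians are conjugate''. I would first dispose of the easy implication and of the dependence on the connecting path, and then concentrate on showing that varying the transversal disk and the base regular point changes the meridian only up to free homotopy, hence up to conjugacy.

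The ``if'' direction is the routine one. Fix a positively oriented meridian $\gamma=\delta\star\hat\gamma\star\bar\delta$ arising from data $(p_i,\DD_i,\hat p_i,\hat\gamma,\delta)$, and let $\alpha\in G$ be represented by a loop based at $p$. Keeping $p_i$, $\DD_i$, $\hat p_i$ and $\hat\gamma$ unchanged, replace the connecting path $\delta$ by the concatenation $\alpha\star\delta$, which is still a path in $X\setminus C$ from $p$ to $\hat p_i$. The resulting loop $(\alpha\star\delta)\star\hat\gamma\star\overline{(\alpha\star\delta)}$ equals $\alpha\gamma\alpha^{-1}$ in $G$, so every conjugate of $\gamma$ is realized by legitimate meridian data and is therefore a positively oriented meridian. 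The same computation, read backwards, shows that two meridians differing only in the choice of connecting path $\delta$ are conjugate, which settles the dependence on $\delta$ once and for all.

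For the ``only if'' direction it remains to compare two positively oriented meridians built from possibly different regular base points and transversals. First I would fix the base point $p_i$ and vary only the transversal disk: in local analytic coordinates $(u,v)$ near $p_i$ with $C=\{v=0\}$, any transversal meets $C$ transversally at the origin, and its positively oriented boundary loop is freely homotopic, inside the local punctured neighborhood inside $X\setminus C$, to the standard loop $v=\epsilon e^{2\pi i t}$; since freely homotopic loops determine conjugate elements of $G$, the class is independent of the transversal. Next I would vary the base point. Let $C_i^\circ$ be the set of smooth points of $C_i$ lying on no other component $C_j$; since $C_i$ is irreducible and we delete only finitely many points, $C_i^\circ$ is connected, hence path connected. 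Given two regular base points I would join them by a path $\sigma$ in $C_i^\circ$ and use a tubular neighborhood of $C_i^\circ$ in $X$, whose complement of $C$ is a punctured-disk bundle over $C_i^\circ$, to sweep the meridian loop along $\sigma$. This produces a free homotopy in $X\setminus C$ between the two meridian loops, again yielding conjugacy. Combining independence from the transversal, from the base point, and from the connecting path shows that all positively oriented meridians around $C_i$ are conjugate.

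The main obstacle is the transport step: one must check that sweeping the loop along $\sigma$ inside the tubular neighborhood stays in $X\setminus C$ \emph{and} preserves the positive orientation. Both follow from the complex structure, since the normal bundle of $C_i^\circ$ is a complex line bundle, so its transition functions and parallel transport along $\sigma$ act by $\CC^*$, which fixes the homotopy class of a positively oriented loop around $0$ rather than inverting it. This is precisely the point distinguishing positively oriented meridians from their inverses, and it is why the statement produces a single conjugacy class rather than one together with its inverse.
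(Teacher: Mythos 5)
Your proposal is correct, but note that the paper does not actually prove this lemma: its ``proof'' is a pointer to \cite{Nori-Zariski} and to \cite[Prop.~1.34]{ji-pau}. So the comparison is really with the standard argument in those references, and that is essentially what you have reconstructed: conjugates are realized by changing the connecting path $\delta$, and all positively oriented meridians lie in one conjugacy class because the set of regular points of $C$ on $C_i$ is connected (irreducibility of $C_i$ minus finitely many points) and the meridian loop can be transported along a path in that set inside an oriented normal disk bundle, the complex structure of the normal bundle guaranteeing that the positive orientation is preserved rather than reversed. This last point is indeed the crux, and you identify it correctly.

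Two details deserve tightening. First, the transversal disk $\DD_i$ in the definition is only required to be centered at $p_i$, transversal to $C_i$ there, and to meet $C$ only at $p_i$; it need not lie in a local analytic chart, so your local-coordinates computation does not apply to it directly. This is easily repaired: the punctured disk $\DD_i\setminus\{p_i\}$ lies in $X\setminus C$ and provides a free homotopy from $\partial\DD_i$ to the boundary of an arbitrarily small concentric subdisk, which is still transversal at $p_i$ and now does lie in a chart, after which your argument proceeds. Second, ``regular point on $C_i$'' must be read as a smooth point of the total curve $C$ lying on $C_i$, i.e.\ a point of your $C_i^\circ$; if $p_i$ were allowed to be a smooth point of $C_i$ sitting on another component $C_j$, the boundary of a disk meeting $C$ only at $p_i$ would also link $C_j$ and would not be conjugate to a meridian taken at a generic point of $C_i$ (two lines in $\PP^2$ already give a counterexample). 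With these two readings made explicit, your proof is complete and matches the classical argument the paper cites.
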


\begin{proof}
See \cite{Nori-Zariski}, also~\cite[Prop. 1.34]{ji-pau} for a proof.
\end{proof}

\begin{lemma}
\label{lemma:meridians-generators}
Consider $X_i:=X\setminus (\cup_{j\in I\setminus\{i\}}C_j)$ and the map $(j_i)_*:\pi_1(X\setminus C)\to \pi_1(X_i)$ induced by the 
inclusion $X\setminus C\hookrightarrow X_i$. Then $(j_i)_*$ is surjective, and $\ker (j_i)_*$ is the normal closure of 
any meridian $\gamma_i$ in~$\pi_1(X\setminus C)$ around $C_i$. 
In particular, if $X$ is simply connected, then the set $\{\gamma_i\}_{i\in I}$ normally generates~$\pi_1(X\setminus C)$.
\end{lemma}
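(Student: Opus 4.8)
The plan is to reduce the statement to the effect of removing a single irreducible hypersurface, and then to run two transversality arguments: one for surjectivity and one for the kernel. Observe first that $X\setminus C = X_i\setminus (C_i\cap X_i)$, where $D:=C_i\cap X_i$ is an irreducible hypersurface in the smooth quasi-projective variety $X_i$. Thus everything is governed by the inclusion $X_i\setminus D\hookrightarrow X_i$, and I may work entirely inside $X_i$. The geometric fact I will use repeatedly is that $D$ has real codimension $2$ in $X_i$, while its singular locus has complex codimension $\geq 2$, hence real codimension $\geq 4$; this is what allows generic $1$- and $2$-dimensional objects to avoid $D$ entirely, or to meet its smooth locus transversally in isolated points.

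For surjectivity, I would take a class in $\pi_1(X_i,p)$ represented by a loop $\gamma\colon S^1\to X_i$ based at $p\in X\setminus C$. Since $\dim_{\RR}S^1=1$ is strictly less than the real codimension of $D$, a small perturbation of $\gamma$ rel basepoint—justified by Thom's transversality theorem applied to the smooth strata of $D$, together with the fact that a generic $1$-chain misses the singular locus of $D$—produces a homotopic loop with image in $X_i\setminus D=X\setminus C$. Hence every class in $\pi_1(X_i)$ lifts, and $(j_i)_*$ is surjective.

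For the kernel I would argue by double inclusion. The inclusion $\ncl(\gamma_i)\subseteq\ker(j_i)_*$ is immediate: the disk $\DD_i$ realizing $\gamma_i$ satisfies $\DD_i\cap C=\{p_i\}$ with $p_i\in C_i$, so $\DD_i\subset X_i$, and therefore $\gamma_i$ bounds a disk in $X_i$, giving $(j_i)_*(\gamma_i)=e$; as the kernel is normal, the normal closure is contained in it. For the reverse inclusion, take $\alpha\in\ker(j_i)_*$, so $\alpha$ bounds a singular disk $h\colon \DD^2\to X_i$. Perturbing $h$ rel boundary to be transverse to the smooth locus of $D$ and to miss its singular locus, the preimage $h^{-1}(D)$ becomes a finite set of interior points. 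The standard picture of cutting $\DD^2$ into subdisks around these points then expresses $\alpha$, up to homotopy in $X_i\setminus D=X\setminus C$, as a product of conjugates of small loops encircling these points; each such loop is a positively or negatively oriented meridian of $C_i$. By Lemma~\ref{lemma:meridians-conjugated} every meridian of $C_i$ is conjugate to $\gamma_i^{\pm1}$ in $\pi_1(X\setminus C)$, so $\alpha\in\ncl(\gamma_i)$. This yields $\ker(j_i)_*=\ncl(\gamma_i)$.

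Finally, the ``in particular'' clause follows by iterating the main statement along a chain $X\setminus C\hookrightarrow X_{i_1}\hookrightarrow X_{i_1,i_2}\hookrightarrow\cdots\hookrightarrow X$ obtained by adding the components $C_i$ back one at a time. Surjectivity composes to give a surjection $\pi_1(X\setminus C)\surj\pi_1(X)$, and the successive kernel computations assemble to identify the total kernel with $\ncl(\{\gamma_i\}_{i\in I})$. When $X$ is simply connected this kernel is everything, so the meridians $\{\gamma_i\}_{i\in I}$ normally generate $\pi_1(X\setminus C)$. I expect the main obstacle to be the transversality step in the kernel argument: ensuring that the null-homotopy can be made transverse to the generally singular hypersurface $D$, and that the resulting decomposition genuinely yields meridians, which hinges on controlling the singular locus of $C_i$ and tracking the orientations carefully.
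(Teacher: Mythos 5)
Your proof is correct, but it is worth noting that it does not parallel anything in the paper: the paper offers no argument for this lemma at all, simply citing Nori's paper and Shimada's \emph{On the Zariski--van Kampen theorem} (Lemma 2.3 there). What you have written is the classical self-contained argument that underlies those references: reduce to removing the single irreducible divisor $D=C_i\cap X_i$ from $X_i$, push loops off $D$ by transversality to get surjectivity, and for the kernel perturb a null-homotopy rel boundary so that it misses $\Sing(D)$ (real codimension $\geq 4$) and meets the smooth locus transversally in finitely many points, then cut the disk into subdisks to exhibit the class as a product of conjugates of meridians, which Lemma~\ref{lemma:meridians-conjugated} identifies with conjugates of $\gamma_i^{\pm 1}$; finally iterate over the components, using surjectivity at each stage to pull the successive kernels back to $\ncl(\{\gamma_i\}_{i\in I})$. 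Two small points deserve attention if this were to be written out in full. First, your argument implicitly takes $C_i$ to be of complex codimension one in $X$ (you call $D$ a hypersurface); this is the relevant setting here, since the paper only invokes the lemma for curves in surfaces (after the Lefschetz-type reduction in Section~\ref{sec:fundamentalgroupsfiber}), and in higher codimension the statement only holds in the degenerate sense that both the kernel and the meridian become trivial. Second, the transversality step should be phrased as stratified (or jet) transversality with respect to the decomposition of $D$ into its smooth and singular loci, since the smooth locus is not closed; you flag this yourself, and it is standard. The trade-off between the two routes is the usual one: the citation to Nori/Shimada is shorter and those sources prove more general statements, while your argument is elementary, self-contained, and makes visible exactly where irreducibility of $C_i$ (via conjugacy of meridians) and the codimension hypotheses enter.
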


\begin{proof}
See~\cite{Nori-Zariski}. Also, as a consequence of~\cite[Lemma 2.3]{Shimada-Zariski}.
\end{proof}

\begin{lemma}[{\cite[\S4 Prop. (1.3)]{Dimca-singularities}}]\label{lem:coprime}
Suppose that $X=\PP^2$, and let $C=\cup_{i=1}^r C_i$ be a curve with $r$ irreducible components $C_1,\ldots,C_r$. Then,
$H_1(X\setminus C;\ZZ)\cong \ZZ^{r-1}\times \ZZ_d,$
where $d$ is the greatest common divisor of the degrees of the irreducible polynomials defining each of the components of $C$.
\end{lemma}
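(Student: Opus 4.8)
The plan is to compute $H_1(\PP^2\setminus C;\ZZ)$ as the abelianization of $\pi_1(\PP^2\setminus C)$ by exhibiting an explicit generating set together with a single defining relation, and then reading off the group via Smith normal form. Write $d_i:=\deg C_i$, so that $d=\gcd(d_1,\dots,d_r)$. Since $\PP^2$ is simply connected, Lemma~\ref{lemma:meridians-generators} shows that the meridians $\gamma_1,\dots,\gamma_r$ (one around each irreducible component $C_i$) normally generate $\pi_1(\PP^2\setminus C)$; hence their classes generate $H_1(\PP^2\setminus C;\ZZ)$ as an abelian group, and we obtain a surjection $\ZZ^r\twoheadrightarrow H_1(\PP^2\setminus C;\ZZ)$ sending the $i$-th basis vector to $\gamma_i$.

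First I would produce the relation $d_1\gamma_1+\dots+d_r\gamma_r=0$. Choose a line $\ell\cong\PP^1$ that is generic with respect to $C$, so that $\ell$ meets each $C_i$ transversally in exactly $d_i$ distinct smooth points of $C$ and $\ell\cap C$ consists of $d=\sum_i d_i$ points. Restricting to $\ell$, the complement $\ell\setminus(\ell\cap C)$ is a sphere with $d$ punctures, in which the product of the small boundary loops around all $d$ punctures is trivial. Under the inclusion $\ell\setminus(\ell\cap C)\hookrightarrow \PP^2\setminus C$, each boundary loop around a point of $\ell\cap C_i$ maps to a meridian of $C_i$, hence to $\gamma_i$ in $H_1$. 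Abelianizing the trivial product yields exactly $\sum_i d_i\gamma_i=0$, so the surjection above factors through $\ZZ^r/\langle(d_1,\dots,d_r)\rangle$.

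The main point, and the one requiring genuine input, is that this is the \emph{only} relation, i.e.\ that the induced map $\ZZ^r/\langle(d_1,\dots,d_r)\rangle\to H_1(\PP^2\setminus C;\ZZ)$ is injective. I would deduce this from the corresponding affine statement: taking the line $\ell$ above as the line at infinity and setting $C_a:=C\setminus\ell\subset\CC^2$, one has $\PP^2\setminus(C\cup\ell)=\CC^2\setminus C_a$, and $H_1(\CC^2\setminus C_a;\ZZ)\cong\ZZ^r$ is \emph{freely} generated by the meridians $\gamma_1,\dots,\gamma_r$. This classical fact follows, for instance, from Alexander duality, or from the observation that the logarithmic residue classes attached to the forms $df_i/f_i$ form a dual basis, where $f_i$ is the irreducible polynomial defining $C_i$. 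Re-including $\ell$ amounts to quotienting by the normal closure of a meridian $\gamma_\ell$ of $\ell$ (Lemma~\ref{lemma:meridians-generators} again), so $H_1(\PP^2\setminus C;\ZZ)\cong \ZZ^r/\langle\gamma_\ell\rangle$; and $\gamma_\ell$, realized as a large circle at infinity, links each of the $d$ points of $\ell\cap C$ once, so its class in $H_1(\CC^2\setminus C_a;\ZZ)\cong\ZZ^r$ generates the same cyclic subgroup as $\sum_i d_i\gamma_i$. Thus $H_1(\PP^2\setminus C;\ZZ)\cong\ZZ^r/\langle d_1\gamma_1+\dots+d_r\gamma_r\rangle$, confirming that no further relations occur.

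Finally I would compute this cokernel directly. Placing the $1\times r$ integer matrix $(d_1\ \dots\ d_r)$ into Smith normal form, its single nonzero invariant factor is $\gcd(d_1,\dots,d_r)=d$, so $\ZZ^r/\langle(d_1,\dots,d_r)\rangle\cong\ZZ^{r-1}\oplus\ZZ_d$. This gives $H_1(\PP^2\setminus C;\ZZ)\cong\ZZ^{r-1}\times\ZZ_d$, as claimed. The only delicate ingredient is the affine freeness input used to rule out extra relations; the remaining steps are elementary.
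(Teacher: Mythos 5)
Your argument is correct, but note that the paper itself does not prove this lemma: it is quoted verbatim from Dimca's book, so the relevant comparison is with the proof in that reference, which is purely (co)homological. There one uses Alexander--Lefschetz duality $H_1(\PP^2\setminus C)\cong H^3(\PP^2,C)$ together with the exact sequence of the pair $(\PP^2,C)$,
\begin{equation*}
H^2(\PP^2)\to H^2(C)\to H^3(\PP^2,C)\to H^3(\PP^2)=0,
\end{equation*}
which identifies $H_1(\PP^2\setminus C)$ with the cokernel of the map $\ZZ\to\ZZ^r$, $1\mapsto (d_1,\ldots,d_r)$, given by restricting the hyperplane class; Smith normal form then finishes. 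Your route instead stays at the level of $\pi_1$ and meridians, in the spirit of the paper's toolkit: surjectivity of $\ZZ^r\to H_1(\PP^2\setminus C;\ZZ)$ from Lemma~\ref{lemma:meridians-generators}, the relation $\sum_i d_i\gamma_i=0$ from a generic line section, and injectivity from the classical fact that $H_1(\CC^2\setminus C_a;\ZZ)$ is free on the meridians. That affine input is where the genuine topological content sits, and it is itself proved by the same kind of duality (or by the linking-number/logarithmic-residue pairing you invoke), so your proof relocates the duality step rather than eliminating it, at the cost of extra length; what it buys is an explicit geometric picture of the unique relation (the meridian at infinity versus the degrees), which matches how meridians are actually manipulated elsewhere in the paper.

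Two small blemishes you should fix. First, you overload $d$: it is defined as $\gcd(d_1,\ldots,d_r)$, yet you later write $\#(\ell\cap C)=d=\sum_i d_i$; use a different symbol for the total degree. Second, ``links each of the $d$ points of $\ell\cap C$ once'' is not quite meaningful as stated (one links components, not points); the precise claim is $\mathrm{lk}(\gamma_\ell,C_i)=\pm d_i$, seen for instance by realizing $\gamma_\ell$ as a large circle in a generic affine line, which bounds a disk meeting $C_i$ transversally in $d_i$ points. This is what yields $[\gamma_\ell]=\pm\sum_i d_i[\gamma_i]$ in $H_1(\CC^2\setminus C_a;\ZZ)$ and hence the identification of the two cyclic subgroups.
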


\subsection{Orbifold fundamental groups}
\label{sec:orbifold}

Let $S$ be a smooth projective curve of genus $g$, and choose a labeling map 
$\varphi:S\to\ZZ_{\geq 0}$ such that $\varphi(P)\neq 1$ only for a finite number of points, say 
$\Sigma=\Sigma_0\cup\Sigma_+\subset S$ for which $\varphi(P)= 0$ if $P\in \Sigma_0$ and 
$\varphi(Q)=m_{Q}>1$ if $Q\in \Sigma_+$. In this context, we will refer to this as an \emph{orbifold structure on} $S$.
This structure will be denoted by $S_{(s,\bar m)}$, where $s=\#\Sigma_0$, and $\bar{m}$ is a $(\#\Sigma_+)$-tuple 
whose entries are the corresponding $m_Q$'s. 

\begin{dfn}[Orbifold fundamental group]
The orbifold fundamental group associated with $S_{(s,\bar m)}$
and denoted by $\pi_1^{\orb}(S_{(s,\bar m)})$ is the quotient of
$
\pi_1(S\setminus \Sigma)$ by the normal closure of the subgroup $
\langle \mu_P^{\varphi(P)}, P\in \Sigma\rangle,
$
where $\mu_P$ is a meridian in $S\setminus \Sigma$ around $P\in \Sigma$.
Note that $\pi_1^{\orb}(S_{(s,\bar m)})$ is hence generated by
$
\{a_i,b_i\}_{i=1,\dots,g} \cup \{\mu_P\}_{P\in \Sigma}
$
and presented by the relations
\begin{equation}
\label{eq:rels}
\mu_P^{m_P}=1, \quad \textrm{ for } P\in \Sigma_+, \quad \textrm{ and } \quad 
\prod_{P\in \Sigma}\mu_P=\prod_{i=1,\dots,g}[a_i,b_i].
\end{equation}
\end{dfn}
In the particular case where $\Sigma_0\neq \emptyset$ ($s\geq 1$), \eqref{eq:rels} shows that 
$\pi_1^{\textrm{orb}}(S_{(s,\bar m)})$ is a free product of cyclic groups:
$$
\pi_1^{\textrm{orb}}(S_{(s,\bar m)})\cong \pi_1(S\setminus\Sigma_0)\Conv\left(
\Conv_{P\in\Sigma_+}\left(\frac{\ZZ}{m_P\ZZ}\right)\right)\cong
\FF_{r}*\ZZ_{m_1}*\dots*\ZZ_{m_n},
$$
where $r=2g-1+\# \Sigma_0=2g+s-1$, $n=\# \Sigma_+$, and $\bar m=(m_1,\dots,m_n)$.

\begin{dfn}
Let $X$ be a smooth algebraic variety. A dominant algebraic morphism $F:X\to S$ defines an 
\emph{orbifold morphism} $F:X\to S_{(s,\bar m)}$ if for all $P\in S$ the divisor $F^*(P)$ is a 
$\varphi(P)$-multiple. The orbifold $S_{(s,\bar m)}$ is said to be \emph{maximal} (with respect to $F$) 
if $F(X)=S\setminus \Sigma_0$ and for all $P\in F(X)$ the divisor $F^*(P)$ is not an $m$-multiple for any $m > \varphi(P)$. 
\end{dfn}

The following result is well known (see for instance~\cite[Prop. 1.4]{ACM-multiple-fibers})
\begin{rem}\label{rem:inducedorb}
Let $F:X\to S_{(s,\bar m)}$ be an orbifold morphism. Then, $F$ induces a morphism
$
F_*:\pi_1(X)\to\pi_1^{\orb}(S_{(s,\bar m)}).
$
Moreover, if the generic fiber of $F$ is connected, then $F_*$ is surjective.
\end{rem}

The language of admissible maps and orbifold morphisms is useful when studying the fundamental group of plane fiber-type curves, as seen by the following result, which is well known in different settings (cf. \cite[Lemma 4.2]{Catanese-Fibred},
\cite[Corollary 2.22]{ji-Eva-orbifold}). Indeed, given a pencil $F$ satisfying Condition~\ref{cond} and a finite set $B\subset \PP^1$ of $s$ elements, one may apply it to $U:=\PP^2\setminus C_B$ to get information about the fundamental group of the complement of $C_B$.

\begin{lemma}[{\cite[Cor. 2.32]{ji-Eva-GOMP}}]\label{lem:exact}
Let $U$ be a smooth quasi-projective variety, let $S$ be a smooth projective curve of genus
	$g$ and let $S'$ be the complement in $S$ of $s$ points, where $s\geq 0$. Let $F:U\to S'$ be an admissible map.
	Let $S_{(s,\bar m)}$ be the maximal orbifold structure of $S$ with respect to $F$ for some
	$\bar m=(m_1,\dots,m_n)$. Then, the following sequence is exact
	$$
	\pi_1(F^{-1}(P))\to\pi_1(U)\xrightarrow{F_*}\pi_1^{\orb}(S_{(s,\bar m)})\to 1,
	$$
	where $P\in S'\setminus B_F$, and the first arrow is induced by the inclusion.
\end{lemma}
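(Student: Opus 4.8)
The plan is to reduce to the genuine fibration obtained by deleting the atypical fibers and then to account for gluing them back in. Write $B:=B_F$, $S^\ast:=S'\setminus B$, $U^\ast:=F^{-1}(S^\ast)$, and fix $P\in S^\ast$ with fiber $\cF:=F^{-1}(P)$; basing all groups at a point of $\cF$, the map $F\colon U^\ast\to S^\ast$ is a locally trivial fibration with connected fiber $\cF$. Surjectivity of $F_\ast\colon\pi_1(U)\to\pi_1^{\orb}(S_{(s,\bar m)})$ is Remark~\ref{rem:inducedorb}, and the inclusion $\im\big(\pi_1(\cF)\to\pi_1(U)\big)\subseteq\ke F_\ast$ is immediate, since the composite $\cF\hookrightarrow U\xrightarrow{F}S$ is constant. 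Hence the only real content is the reverse inclusion $\ke F_\ast\subseteq\im\big(\pi_1(\cF)\to\pi_1(U)\big)$.

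First I would set up the two vertical surjections relating the fibration to the actual spaces. The homotopy exact sequence of $\cF\hookrightarrow U^\ast\to S^\ast$, together with connectedness of $\cF$, yields exactness of
\[
\pi_1(\cF)\xrightarrow{\ i_\ast\ }\pi_1(U^\ast)\xrightarrow{\ F_\ast\ }\pi_1(S^\ast)\longrightarrow 1,
\]
so $\ke\big(F_\ast\colon\pi_1(U^\ast)\to\pi_1(S^\ast)\big)=\im i_\ast$. Since $F$ is flat over the smooth curve $S$ (being a dominant morphism from an irreducible variety), $F^{-1}(B)=U\setminus U^\ast$ is a divisor, so by the meridian-generation principle (Lemma~\ref{lemma:meridians-generators}, in its hypersurface form) the inclusion induces a surjection $p\colon\pi_1(U^\ast)\twoheadrightarrow\pi_1(U)$ whose kernel $N_U$ is the normal closure of the meridians $\gamma_{P,j}$ around the irreducible components $D_{P,j}$ of the fibers over $P\in B$. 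Downstairs, the composite $\pi_1(S^\ast)\to\pi_1(S\setminus\Sigma)\to\pi_1^{\orb}(S_{(s,\bar m)})$ is a surjection $q$ with kernel $N_S$ the normal closure of $\{\mu_P : P\in B\setminus\Sigma_+\}\cup\{\mu_Q^{m_Q} : Q\in\Sigma_+\}$, where $\mu_P$ is a base meridian around $P$. These four maps form a commutative square $q\circ F_\ast=F_\ast\circ p$.

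A diagram chase then reduces everything to the single identity $F_\ast(N_U)=N_S$: one checks that $\ke\big(F_\ast\colon\pi_1(U)\to\pi_1^{\orb}\big)=p\big(F_\ast^{-1}(N_S)\big)$ while $\im\big(\pi_1(\cF)\to\pi_1(U)\big)=p(\im i_\ast)=p(\ke F_\ast)$, and, using that $F_\ast\colon\pi_1(U^\ast)\to\pi_1(S^\ast)$ is surjective with kernel $\im i_\ast$, these agree exactly when $F_\ast(N_U)=N_S$. To establish the latter I would compute $F_\ast$ on meridians locally. Near a general point of each component $D_{P,j}$ the smoothness of $U$ gives $F$ the local normal form $z\mapsto z^{n_{P,j}}$ on a transverse disk, where $n_{P,j}$ is the multiplicity of $D_{P,j}$ in the divisor $F^\ast(P)$; hence $F_\ast(\gamma_{P,j})$ is conjugate to $\mu_P^{n_{P,j}}$ (Lemma~\ref{lemma:meridians-conjugated}). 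As all these are powers of the single element $\mu_P$, a Bézout argument identifies the normal closure of $\{\mu_P^{n_{P,j}}\}_j$ with that of $\mu_P^{\gcd_j n_{P,j}}$. Since the orbifold structure is maximal, $\gcd_j n_{P,j}$ equals the label $m_P=\varphi(P)$ (with $m_P=1$, i.e. $\mu_P^{m_P}=\mu_P$, exactly when $P\notin\Sigma_+$), so summing over $P\in B$ gives $F_\ast(N_U)=N_S$.

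The main obstacle is precisely this local analysis and the passage from the component multiplicities $n_{P,j}$ to the fiber multiplicity $\gcd_j n_{P,j}$: one must place the transverse disks at general points of the $D_{P,j}$, away from the remaining components and from the singular locus $\Sing F^{-1}(P)$, justify the monomial normal form from smoothness of $U$, and verify that the maximality of $S_{(s,\bar m)}$ is exactly what forces $\varphi(P)=\gcd_j n_{P,j}$, so that the two normal closures coincide on the nose. The remaining ingredients — the fibration exact sequence and the meridian lemmas — are standard and quoted above.
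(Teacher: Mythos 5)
Your proof is correct. Note that the paper itself contains no proof of Lemma~\ref{lem:exact}: it quotes the result from \cite[Cor. 2.32]{ji-Eva-GOMP}, and the argument there (as in the closely related \cite[Prop. 1.4]{ACM-multiple-fibers} and \cite[Lemma 4.2]{Catanese-Fibred}) follows essentially the route you take — restrict to the locally trivial fibration over $S'\setminus B_F$, use its homotopy exact sequence, and glue back the atypical fibers by tracking meridians, whose images under $F_*$ are conjugates of $\mu_P^{n_{P,j}}$, with the B\'ezout step and the maximality of the orbifold structure identifying $\gcd_j n_{P,j}$ with the orbifold label at $P$.
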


\subsection{Fundamental groups of fiber-type curve complements}\label{sec:fundamentalgroupsfiber}
We start by recalling some general facts about fundamental groups of complements of fiber-type curves which were found by the authors in \cite{ji-Eva-orbifold} by exploiting the exact sequence from Lemma~\ref{lem:exact}, as opposed to using braid monodromy arguments.

Given an admissible map $F:U\to S$ from a smooth quasi-projective variety to a smooth projective curve, we want to obtain information regarding a presentation of the fundamental group of $U_B:=U\setminus F^{-1}(B)$. After a Lefschetz-type argument, we may assume that $U$ is a smooth quasi-projective surface, and that $F^{-1}(P)$ is a curve for all $P\in B$, so $U_B$ is the complement of a fiber-type curve in $U$.

Suppose that $B\subset S$ is a finite set. If $B\cup B_F\neq\emptyset$, then $S\setminus (B\cup B_F)$ is aspherical, as it is homotopy equivalent to a wedge of circles. Hence, the long exact sequence in homotopy groups associated to the fibration $F:U_{B\cup B_F}\twoheadrightarrow S\setminus (B\cup B_F)$ yields a short exact sequence
$$
1\to \pi_1(F^{-1}(P))\to \pi_1(U_{B\cup B_F})\to \pi_1(S\setminus (B\cup B_F))\to 1
$$
which splits, since $\pi_1(S\setminus (B\cup B_F))$ is a free group. This yields a description of $\pi_1(U_{B\cup B_F})$ as a semidirect product. Lemma~\ref{lemma:meridians-generators} can be used to obtain a presentation of $\pi_1(U_{B})$
from such a presentation of $\pi_1(U_{B\cup B_F})$ by adding meridians about the irreducible components of
$F^{-1}(B_F\setminus B)$ to the relations, resulting after some work in the following.

\begin{lemma}[{\cite[Lem. 4.1]{ji-Eva-orbifold}}]\label{lem:presentationGeneral}
Let $U$ be a smooth complex quasi-projective variety, let $S$ be a smooth complex projective curve of genus $g$,
and let $F:U\to S$ be an admissible map. Let $B=\{P_1,\ldots, P_{s+1}\}$ be a non-empty finite subset of $S$
with $s+1$ elements, let $U_B:=U\setminus F^{-1}(B)$ and let $S_{(s+1,\bar m)}$ be the maximal orbifold structure on $S$ with
respect to the map $F_|:U_B\to S\setminus B\subset S$, where $\bar m=(m_1,\ldots,m_n)$. For all $i=1,\ldots, n$, let
$Q_i\in S\setminus B$ be the point of multiplicity $m_i$ in the orbifold structure $S_{(s+1,\bar m)}$. Let $K_B$ be
the kernel of
$$
F_*:\pi_1(U_B)\twoheadrightarrow \pi_1^{\orb}\left(S_{(s+1,\bar m)}\right).
$$
Then, $\pi_1(U_{B})$ has a
	presentation with a finite set of generators
	$\SF\cup\SB$, where
	$\SB=\{\gamma_1,\ldots,\gamma_{n+s},\ldots,\gamma_{s+2g+n}\}$, and relations
	\begin{enumerate}
		\enet{\rm(R\arabic{enumi})}
		\item\label{R1}
		$[\gamma_{l},w]=z_{l,w}$, for all $l=1,\ldots,s$ and all $w\in\SF$, where $z_{l,w}$ is a word in~$\SF$
		(that depends both on $\gamma_l$ and $w$), which is the trivial word if $P_l\in\PP^1\setminus B_F$,
		\item\label{R2}
		$[\gamma_{l},w]=z_{l,w}$, for all $l=s+1,\ldots,s+2g+n$ and all $w\in\SF$, where $z_{l,w}$ is a word in~$\SF$
		(that depends both on $\gamma_l$ and $w$).
		\item\label{R3}
		$y=1$ for a finite number of words $y$ in $\SF$,
		\item\label{R4} 
		$z_{i}=\gamma_{2g+s+i}^{m_{i}}$, for any $i=1,\ldots,n$, where $z_{i}$ is a word in~$\SF$.
	\end{enumerate}
	Furthermore, the loops in the generating set satisfy the following:
	\begin{itemize}
		\item The set of loops in $\SF$ generates $K_B$.
		\item $F_*(\SB)$ is a set of generators of
		$\pi_1^{\orb}(S_{(s+1,\bar m)})\cong \FF_{2g+s}*(\ast_{i=1}^n\ZZ_{m_i})$.
		\item $F_*(\gamma_{j})$ is a positively oriented meridian about $P_j$ for $j=1,\ldots,s$ and $F_*(\gamma_{2g+s+i})$ is a positively oriented meridian about the point $Q_i$ of multiplicity
		$m_i$, in a way in which $F_*(\gamma_{j})$ generates the $j$-th $\ZZ$ in the free product $\FF_{2g+s}*(\ast_{i=1}^n\ZZ_{m_i})$ for $j=1,\ldots,2g+s$
		and $F_*(\gamma_{2g+s+i})$ generates the $\ZZ_{m_i}$ factor for $i=1,\ldots,n$.
		\item Let $i\in\{1,2,\ldots,s\}$. If $P_i\in B\setminus M_F$ satisfies that $C_{P_i}:=F^{-1}(P_i)$ is irreducible, then $\gamma_i$ is a positively oriented meridian about $C_{P_i}$.
		\end{itemize}
		
\end{lemma}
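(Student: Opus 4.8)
The plan is to compute $\pi_1(U_B)$ in two stages: first over the punctured base $S\setminus(B\cup B_F)$, where $F$ is an honest locally trivial fibration, and then recover $\pi_1(U_B)$ by filling the fibers over $B_F\setminus B$ back in. Since $B\neq\emptyset$, the base $S\setminus(B\cup B_F)$ is a noncompact curve, hence aspherical and homotopy equivalent to a wedge of $2g+\#(B\cup B_F)-1$ circles, with free fundamental group. Using that $\pi_2$ of the base vanishes and the generic fiber is connected, the long exact homotopy sequence of the fibration $F\colon U_{B\cup B_F}\to S\setminus(B\cup B_F)$ collapses to the short exact sequence displayed just before the statement, and freeness of the base group provides a splitting. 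Thus $\pi_1(U_{B\cup B_F})$ is a semidirect product of $\pi_1(F^{-1}(P))$ by $\pi_1(S\setminus(B\cup B_F))$; after a Lefschetz-type reduction we may take $U$ to be a surface and $F^{-1}(P)$ a curve, so the fiber group is finitely generated.

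Second, I would fix generators adapted to this structure. Let $\SF$ be (the image of) a generating set of the fiber group $\pi_1(F^{-1}(P))$; this set will generate $K_B$. For $\SB$, take lifts through the chosen section of a standard geometric basis of the free base group: positively oriented meridians around the points of $B$ and of $B_F\setminus B$, together with $2g$ handle generators. The semidirect-product structure encodes the monodromy of the fibration as conjugation, so for each base generator $\gamma$ and each $w\in\SF$ one obtains a relation $\gamma w\gamma^{-1}=(\text{word in }\SF)$, equivalently $[\gamma,w]=z_{\gamma,w}$ with $z_{\gamma,w}$ a word in $\SF$. Around a value $P_l\notin B_F$ the monodromy is trivial, so the corresponding $z_{l,w}$ is the empty word; splitting the generators accordingly yields the relations \ref{R1} (generic meridians, trivial $z$) and \ref{R2} (handles and meridians carrying nontrivial monodromy).

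Third, to descend from $U_{B\cup B_F}$ to $U_B$ I would invoke Lemma~\ref{lemma:meridians-generators}: filling the fibers over $B_F\setminus B$ back in realizes $\pi_1(U_B)$ as the quotient of $\pi_1(U_{B\cup B_F})$ by the normal closure of the meridians about the irreducible components of $F^{-1}(B_F\setminus B)$. The core of the argument is the local analysis of these meridians. Writing $F^*(P)=\sum_j a_j E_j$ for $P\in B_F\setminus B$, in transverse coordinates $F$ looks like $t=u^{a_j}$ near a general point of $E_j$, so the meridian about $E_j$ maps under $F_*$ to $\mu_P^{a_j}$ and thus differs from $\delta_P^{a_j}$ by a word in $\SF$, where $\delta_P$ is the base meridian around $P$. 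If $P\notin M_F$ then $\gcd_j a_j=1$, so killing these meridians both eliminates $\delta_P$ as a generator (consistently with $\varphi(P)=1$, i.e.\ $P\notin\Sigma$ in the maximal orbifold structure) and imposes that certain words in $\SF$ be trivial; these surviving conditions are the relations \ref{R3}. If instead $P=Q_i\in M_F$ has multiplicity $m_i$, its meridian $\delta_{Q_i}=\gamma_{2g+s+i}$ survives as the order-$m_i$ cone-point generator of $\pi_1^{\orb}(S_{(s+1,\bar m)})$, and the relation coming from the reduced fiber takes the form $\gamma_{2g+s+i}^{m_i}=z_i$ with $z_i$ a word in $\SF$, which is \ref{R4}. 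Matching $F_*(\SB)$ with the standard generators of $\pi_1^{\orb}(S_{(s+1,\bar m)})\cong\FF_{2g+s}*(\ast_i\ZZ_{m_i})$ from Lemma~\ref{lem:exact} and Remark~\ref{rem:inducedorb}, and using Lemma~\ref{lemma:meridians-conjugated} to recognize geometric meridians, then yields the remaining bulleted properties, in particular that $\gamma_i$ is a meridian about $C_{P_i}$ when $P_i\in B\setminus M_F$ and $C_{P_i}$ is irreducible.

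The step I expect to be the main obstacle is this last local analysis: controlling exactly which words in $\SF$ appear when a reducible or non-reduced atypical fiber is filled in, and checking that killing the meridians of its components yields precisely relations of the forms \ref{R3} and \ref{R4} with no further interaction with the base generators. Keeping the section, the monodromy action, and the orbifold meridian identifications mutually consistent throughout this bookkeeping is the delicate part; by contrast, the fibration and orbifold inputs (Lemma~\ref{lem:exact} and Remark~\ref{rem:inducedorb}) are standard.
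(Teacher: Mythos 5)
Your proposal is correct and follows essentially the same route as the paper: the paper does not reprove this lemma (it is quoted from \cite{ji-Eva-orbifold}*{Lem. 4.1}), but the paragraph preceding the statement sketches exactly your argument --- the split short exact sequence coming from the locally trivial fibration over $S\setminus(B\cup B_F)$, the resulting semidirect-product description of $\pi_1(U_{B\cup B_F})$, and the use of Lemma~\ref{lemma:meridians-generators} to fill in the fibers over $B_F\setminus B$ by adding meridian relations. Your further local analysis of how the component meridians of atypical fibers produce the relations \ref{R3} and \ref{R4} (trivial fibers versus multiple fibers of multiplicity $m_i$) is the same bookkeeping carried out in the cited reference.
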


\begin{rem}\label{rem:genericCommute}
Under the same notation and assumptions as in Lemma~\ref{lem:presentationGeneral}, let $P\in B$, and suppose that $P\notin B_F$. Then, by the description of $\pi_1(U_B)$ as a quotient of the semidirect product $\pi_1(U_{B\cup B_F})$, any meridian about $F^{-1}(P)$ commutes with all the elements in $K_B$.
\end{rem}

		Now, we turn to the case in which $B$ consists on a single point. 
		
\begin{lemma}\label{lem:kernel1fiber}
With the same notation and assumptions as in Lemma~\ref{lem:presentationGeneral}, suppose moreover that $B=\{P\}$ for some $P\in S\setminus B_F$, that $F_*:\pi_1(U)\to \pi_1^{\orb}\left(S_{(0,\bar m)}\right)$ is an isomorphism and that $M_F\neq \emptyset$ (i.e. $n\geq 1$). Let $K_P$ be the kernel of
$$
F_*:\pi_1\left(U_{\{P\}}\right)\twoheadrightarrow \pi_1^{\orb}\left(S_{(1,\bar m)}\right).
$$
Then,
\begin{itemize}
\item $K_P$ is an abelian group.
\item If $U$ is simply connected, then $K_P$ is a central subgroup of $\pi_1\left(U_{\{P\}}\right)$.
\item $\pi_1\left(U_{\{P\}}\right)$ has a presentation as in Lemma~\ref{lem:presentationGeneral} with $\SB=\{\gamma_1,\ldots,\gamma_{2g},\gamma_{2g+1},\ldots,\gamma_{2g+n}\}$, such that $K_P=\langle \gamma_{2g+1}^{m_1},\ldots,\gamma_{2g+n}^{m_n}\rangle$.
\end{itemize}
\end{lemma}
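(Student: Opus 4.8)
The plan is to compare $\pi_1(U_{\{P\}})$ with $\pi_1(U)$ through the fibration, realizing $K_P$ as the image of a generic fiber and locating it inside the normal closure of a single meridian. Write $j_*\colon\pi_1(U_{\{P\}})\to\pi_1(U)$ for the map induced by the inclusion $U_{\{P\}}\hookrightarrow U$, and set $Q:=\pi_1^{\orb}(S_{(1,\bar m)})\cong\FF_{2g}*\ZZ_{m_1}*\dots*\ZZ_{m_n}$. Since $P\notin B_F$, the fiber $F^{-1}(P)$ is a smooth connected, hence irreducible, curve, so by Lemma~\ref{lemma:meridians-generators} the map $j_*$ is surjective and $N:=\ker j_*$ is the normal closure $\langle\!\langle\mu\rangle\!\rangle$ of a meridian $\mu$ about $F^{-1}(P)$. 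Applying Lemma~\ref{lem:exact} to $F\colon U_{\{P\}}\to S\setminus\{P\}$ identifies $K_P=\ker\!\big(F_*\colon\pi_1(U_{\{P\}})\to Q\big)$ with the image of $\pi_1(F^{-1}(P_0))$ for a generic $P_0$, while applying it to $F\colon U\to S$ together with the hypothesis that $F_*\colon\pi_1(U)\to\pi_1^{\orb}(S_{(0,\bar m)})$ is an isomorphism shows that the composite $\pi_1(F^{-1}(P_0))\to\pi_1(U_{\{P\}})\xrightarrow{j_*}\pi_1(U)$ is trivial. Hence $K_P\subseteq N$.

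Next I would prove abelianness and centrality by a purely group-theoretic argument from this containment. The subgroup $K_P$ is normal in $\pi_1(U_{\{P\}})$, being a kernel, and by Remark~\ref{rem:genericCommute} the meridian $\mu$ commutes with every element of $K_P$. Given $g\in\pi_1(U_{\{P\}})$ and $x\in K_P$, normality gives $g^{-1}xg\in K_P$, and since $\mu$ centralizes $K_P$ one computes $(g\mu g^{-1})\,x\,(g\mu g^{-1})^{-1}=x$; thus every conjugate of $\mu$ centralizes $K_P$. As $N=\langle\!\langle\mu\rangle\!\rangle$ is generated by these conjugates, $N$ centralizes $K_P$, and because $K_P\subseteq N$ this already forces $K_P$ to be abelian. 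If moreover $U$ is simply connected, then $\pi_1(U)=1$, so $N=\ker j_*=\pi_1(U_{\{P\}})$, and the same computation shows that the whole group centralizes $K_P$; that is, $K_P$ is central.

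For the presentation and the generation statement I would start from the presentation furnished by Lemma~\ref{lem:presentationGeneral} with $s=0$, so that $\SB=\{\gamma_1,\dots,\gamma_{2g+n}\}$ and $\SF$ generates $K_P$. Relation~\ref{R4} reads $\gamma_{2g+i}^{m_i}=z_i$ with $z_i\in K_P$, so the elements $z_i=\gamma_{2g+i}^{m_i}$ already lie in $K_P$; the content is that they generate it. To see this I would exploit the isomorphism hypothesis homologically: $Q$ is a free product of cyclic groups, for which $H_2(-;\ZZ)=0$. In the central case the five-term exact sequence of $1\to K_P\to\pi_1(U_{\{P\}})\to Q\to1$ collapses to an injection $K_P\hookrightarrow H_1(U_{\{P\}};\ZZ)$ with image $\ker\!\big(H_1(U_{\{P\}};\ZZ)\to Q^{\mathrm{ab}}\big)$; moreover, since $\mu$ normally generates $\pi_1(U_{\{P\}})$ when $U$ is simply connected, $H_1(U_{\{P\}};\ZZ)$ is cyclic, so $K_P$ is cyclic and a direct check on the classes $[z_i]=m_i[\gamma_{2g+i}]$ shows the $z_i$ generate it.

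The main obstacle is the general, not necessarily simply connected, case. There $K_P$ is only abelian and carries a possibly nontrivial $Q$-action by conjugation, so the five-term sequence controls merely the coinvariants $(K_P)_{Q}$ rather than $K_P$ itself. Closing the gap requires two geometric inputs: first, that each $z_i$ is in fact central — heuristically, $z_i$ is a meridian about the full multiple fiber $F^{*}(Q_i)$, which is homologous to the generic fiber and so behaves like $\mu$ — which collapses the $Q$-action on $\langle z_1,\dots,z_n\rangle$; and second, that the fiber loops generating $\SF$, which die in $\pi_1(U)$ by the isomorphism hypothesis and therefore lie in $N=\langle\!\langle\mu\rangle\!\rangle$, can be rewritten in terms of the $z_i$. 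Establishing this rewriting, which reflects the local cyclic $\ZZ_{m_i}$-structure of the degeneration at each multiple fiber, is where I expect the real work to lie; once it is in place, $\SF$ may be replaced by $\{z_1,\dots,z_n\}$ and the asserted presentation with $K_P=\langle\gamma_{2g+1}^{m_1},\dots,\gamma_{2g+n}^{m_n}\rangle$ follows.
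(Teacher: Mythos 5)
Your treatment of the first two items is correct, and in fact more self-contained than the paper's own proof (which simply cites \cite[Lem.~4.2]{ji-Eva-orbifold}, step (3) of its proof, and Lemma~\ref{lemma:meridians-generators}). The chain you set up is sound: by Lemma~\ref{lem:exact}, $K_P$ is the image of $\pi_1(F^{-1}(P_0))$; by the isomorphism hypothesis and Lemma~\ref{lem:exact} applied to $F:U\to S$, this image dies in $\pi_1(U)$, so $K_P\subseteq N=\ker j_*=\langle\!\langle\mu\rangle\!\rangle$ by Lemma~\ref{lemma:meridians-generators}; and Remark~\ref{rem:genericCommute} together with your conjugation computation shows $N$ centralizes $K_P$, giving abelianness, and centrality when $U$ is simply connected (since then $N$ is the whole group).

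However, there is a genuine gap at the third item, which is the substantive content of the lemma (it is what Lemma~\ref{lem:purelycentral} actually uses, via $K_P=\langle\gamma_1^p,\gamma_2^q\rangle$): you do not prove $K_P=\langle \gamma_{2g+1}^{m_1},\ldots,\gamma_{2g+n}^{m_n}\rangle$, and you say so yourself (``this is where I expect the real work to lie''). The inclusion $\supseteq$ is immediate from relation~\ref{R4}; the missing direction is that every element of $\SF$ can be rewritten in terms of the $\gamma_{2g+i}^{m_i}$, and this is left entirely open in the general case. Moreover, even your sketch for the simply connected case does not close. The five-term sequence (valid since the extension is then central and $H_2$ of a free product of cyclic groups vanishes) does give an injection of $K_P$ onto $\ker\left(H_1(U_{\{P\}};\ZZ)\to \pi_1^{\orb}(S_{(1,\bar m)})^{\mathrm{ab}}\right)$, and cyclicity of $H_1(U_{\{P\}};\ZZ)$ gives that $K_P$ is cyclic; but to conclude you must show that the subgroup generated by the classes $m_i[\gamma_{2g+i}]$ is \emph{all} of that kernel rather than a proper subgroup of it. Writing $[\gamma_{2g+i}]=a_i[\mu]$ in the cyclic group $H_1(U_{\{P\}};\ZZ)\cong\ZZ_d$, this is an arithmetic condition on the integers $a_i$ (already for $n=1$ one needs $\gcd(a_1,d/m_1)=1$; surjectivity onto $\ZZ_{m_1}$ only gives $\gcd(a_1,m_1)=1$), and nothing in your proposal establishes it --- it is not a ``direct check''. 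The paper obtains all of this from \cite[Lem.~4.2]{ji-Eva-orbifold}, whose proof works with the semidirect-product description of $\pi_1(U_{B\cup B_F})$ and the monodromy action to eliminate the $\SF$-generators; that analysis, or a substitute for it, is precisely what your argument lacks.
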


\begin{proof}
The first and last points follow from \cite[Lem. 4.2]{ji-Eva-orbifold}. The second point follows from step (3) in the proof of \cite[Lem. 4.2]{ji-Eva-orbifold} and from Lemma~\ref{lemma:meridians-generators}.
\end{proof}

\begin{rem}\label{rem:sc}
Note that the hypothesis that $F_*:\pi_1(U)\to \pi_1^{\orb}\left(S_{(0,\bar m)}\right)$ is an isomorphism is automatically satisfied in Lemma~\ref{lem:kernel1fiber} if $F:\PP^2\dashrightarrow S=\PP^1$ is a pencil satisfying Condition~\ref{cond} and $U=\PP^2\setminus\mathcal B$, where $\mathcal B$ is the set of base points of the pencil $F$. Also note that, if $U=\PP^2\setminus\mathcal B$, then $U$ is simply connected. Furthermore, in that case, $U_{\{P\}}=\PP^2\setminus C_P$.
\end{rem}

We now further impose the hypothesis that $U$ is simply connected, in which case $S$ must be $\PP^1$, and thus $g=0$. Moreover, by \cite[Remark 2.2]{ji-Eva-orbifold}, $\# M_F\leq 2$ in that case.

\begin{lemma}[{\cite[Cor. 4.7]{ji-Eva-orbifold}}]\label{lem:presentation1fiber}
Under the same notation and assumptions of Lemma~\ref{lem:kernel1fiber}, suppose moreover that $U$ is simply connected. Then,
$\pi_1(\PP^2\setminus C_P)$ has the following presentation:
	\begin{itemize}
		\item If $\# M_F=2$, and $\bar m=(p,q)$,
		$$
		\pi_1(U_{\{P\}})\cong \langle \gamma_1,\gamma_2: [\gamma_1,\gamma_2^{q}], [\gamma_2,\gamma_1^{p}], \gamma_1^{pn_1}\gamma_2^{qn_2},\gamma_2^{qn_3}\rangle
		$$
		for some $n_1,n_2,n_3\in\ZZ$.
		\item If $\# M_F=1$, and $\bar m=(p)$, then
		$$
		\pi_1(U_{\{P\}})\cong \langle \gamma_1:  \gamma_1^{pn_1}\rangle
		$$
		for some $n_1\in\ZZ$. In particular, $\pi_1(U_{\{P\}})$ is abelian.
	\end{itemize}
\end{lemma}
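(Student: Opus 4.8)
The plan is to start from the presentation of $\pi_1(U_{\{P\}})$ given by Lemma~\ref{lem:presentationGeneral} and to simplify it drastically using the extra structural information in Lemma~\ref{lem:kernel1fiber}. Since $U$ is simply connected we have $S=\PP^1$ and $g=0$, and, as recalled above, $\#M_F\leq 2$; set $n:=\#M_F\in\{1,2\}$. With $g=s=0$ the generating set of Lemma~\ref{lem:presentationGeneral} is $\SF\cup\{\gamma_1,\dots,\gamma_n\}$, where $\gamma_1,\dots,\gamma_n$ are meridians about the $n$ multiple fibers. Lemma~\ref{lem:kernel1fiber} (using that $U$ is simply connected) tells us that $K_P$ is \emph{central} and abelian, that $K_P=\langle\gamma_1^{m_1},\dots,\gamma_n^{m_n}\rangle$, and that the loops in $\SF$ generate $K_P$. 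In particular $\SF\subseteq\langle\gamma_1,\dots,\gamma_n\rangle$, so the $n$ meridians already generate the whole group.

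The first step is to eliminate the generators in $\SF$. Since the relations (R1)--(R4) of Lemma~\ref{lem:presentationGeneral} do present $\pi_1(U_{\{P\}})$, the equality $K_P=\langle\gamma_1^{m_1},\dots,\gamma_n^{m_n}\rangle$ means that each $w\in\SF$ is equal, in the group, to a word $u_w$ in the powers $\gamma_i^{m_i}$; hence adjoining the relations $w=u_w$ and then deleting the generators $w$ are legitimate Tietze transformations. After this substitution the generators are exactly $\gamma_1,\dots,\gamma_n$, and the relations (R2), (R3), (R4) become relations among the $\gamma_i$ and their $m_i$-th powers.

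Consider first $n=2$, with $\bar m=(p,q)$. Centrality of $K_P$ forces $\gamma_1^{p}$ and $\gamma_2^{q}$ to be central; as the group is generated by $\gamma_1,\gamma_2$, this is equivalent to the two relations $[\gamma_1,\gamma_2^{q}]=1$ and $[\gamma_2,\gamma_1^{p}]=1$, and once these hold every rewritten instance of (R2) becomes redundant. What survives is the description of the central abelian group $K_P=\langle\gamma_1^{p},\gamma_2^{q}\rangle$ coming from (R3) and (R4), which exhibits $K_P$ as a quotient $\ZZ^2/L$ of the free abelian group on the two commuting central symbols $\gamma_1^{p},\gamma_2^{q}$. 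Choosing a basis of the relation lattice $L\subseteq\ZZ^2$ in Hermite normal form replaces all these relations by the two triangular ones $\gamma_1^{pn_1}\gamma_2^{qn_2}=1$ and $\gamma_2^{qn_3}=1$, giving exactly the claimed presentation. I expect the bookkeeping of this last reduction to be the main obstacle: one must check that, after eliminating $\SF$, precisely these two power relations survive and that no hidden relation among $\gamma_1^{p},\gamma_2^{q}$ has been overlooked.

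Finally, for $n=1$ with $\bar m=(p)$, the group is generated by the single meridian $\gamma_1$ and is therefore cyclic. Since it surjects onto $\pi_1^{\orb}(S_{(1,(p))})\cong\ZZ_p$, its order is a multiple of $p$ when finite and it is infinite cyclic otherwise; in either case it is isomorphic to $\langle\gamma_1:\gamma_1^{pn_1}\rangle$ for a suitable $n_1\in\ZZ$, and in particular abelian.
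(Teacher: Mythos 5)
Your proof is correct, and it is essentially the intended argument: the paper itself gives no proof of this lemma (it imports it as \cite[Cor.~4.7]{ji-Eva-orbifold}), but the two results it states immediately beforehand, Lemmas~\ref{lem:presentationGeneral} and~\ref{lem:kernel1fiber}, are exactly the ingredients you combine, so your reconstruction follows the route the paper's setup is designed for. One imprecision should be fixed in the $\#M_F=2$ case: after eliminating $\SF$ and adjoining the two commutation relations, the rewritten (R2) relations $[\gamma_l,u_w]=u_{z_{l,w}}$ are \emph{not} redundant; modulo centrality they reduce to $u_{z_{l,w}}=1$, i.e.\ to genuinely new relations among the central symbols $\gamma_1^{p},\gamma_2^{q}$, so the relation lattice $L$ receives contributions from (R2) as well as from (R3)--(R4). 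This is precisely the ``hidden relations'' worry you flag at the end, and your own final step already absorbs it: in $G_0:=\langle\gamma_1,\gamma_2\mid[\gamma_1,\gamma_2^{q}],[\gamma_2,\gamma_1^{p}]\rangle$ the subgroup $\langle\gamma_1^{p},\gamma_2^{q}\rangle$ is central and free abelian of rank two (map $G_0\to\ZZ^2$ by $\gamma_1\mapsto(1,0)$, $\gamma_2\mapsto(0,1)$), a set of central elements has normal closure equal to the subgroup it generates, so the quotient depends only on the sublattice $L\subseteq\ZZ^2$ spanned by \emph{all} rewritten power relations, whatever their provenance, and any triangular generating set $\{(n_1,n_2),(0,n_3)\}$ of $L$ yields the stated presentation. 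With that rewording the $\#M_F=2$ case is complete, and your shortcut in the $\#M_F=1$ case (the group is cyclic, generated by $\gamma_1$, and surjects onto $\ZZ_p$, hence is $\ZZ$ or $\ZZ_{pn_1}$) is fine as written.
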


Let $U$ be a smooth quasi-projective variety, let $S$ be a smooth projective curve of genus $g\geq 0$ and let
$F:U\to S$ be an admissible map. Let $B\subset S$ be a finite set with $s\geq 1$ elements, and consider the
hypersurface of fiber-type $F^{-1}(B)$, and its complement $U_B:=U\setminus F^{-1}(B)$. The
last goal of this section is to describe the changes to the kernel of the map
$F_*:\pi_1(U_B)\to\pi_1^{\orb}(S_{(s,\bar m)})$ after adding points to $B$/removing points from $B$, where
$S_{(s,\bar m)}$ is the maximal orbifold structure on $S$ with respect to $F:U_B\to S\setminus B\subset S$.
If the starting kernel was trivial, this was done in~\cite[Thm. 1.4]{ji-Eva-orbifold}
(addition) and in ~\cite[Thm. 4.9]{ji-Eva-orbifold} (deletion). Theorem~\ref{thm:additiongeneral} below provides a
generalization of~\cite[Thm. 1.4]{ji-Eva-orbifold}, while Theorem~\ref{thm:deletion} below provides a generalization
of ~\cite[Thm. 4.9]{ji-Eva-orbifold}. Before we prove those results, let us introduce the following notation.

\begin{dfn}\label{def:purelycentral}
	We say that a normal subgroup $K$ of a group $G$ is \emph{purely central} if $K\subset Z(G)$ ($K$ is contained
	in the center of $G$) and $K\cap G'=\{1\}$ (the intersection of $K$ with the commutator subgroup
	of $G$ is trivial). Moreover, we call a central extension
	$1\to K \rightmap{i}\ G \rightmap{j}\ H\to 1$ of $K$ by $H$ \emph{purely central} if $i(K)$ is
	purely central in~$G$.
\end{dfn}
Note that a central extension $1\to K \rightmap{i}\ G \rightmap{j}\ H\to 1$ is purely central if and only if it induces a short
exact sequence $1\to K/K' \rightmap{i}\ G/G' \rightmap{j}\ H/H'\to 1$ on the respective abelianizations.
\begin{thm}[Addition theorem]\label{thm:additiongeneral}
	Let $U$ be a smooth quasi-projective variety, let $S$ be a smooth projective curve of genus $g\geq 0$ and let $F:U\to S$ be an admissible map. Let $A\subset S$ be a non-empty set, with $\# A=s\geq 1$, and let $P\in S\setminus A$. Let $U_A:=U\setminus F^{-1}(A)$ (resp. $U_{A\cup\{P\}}:=U\setminus F^{-1}(A\cup\{P\})$). Let $K_A$ be the kernel of
	$$
	F_*:\pi_1(U_A)\to\pi_1^{\orb}(S_{(s,\bar m)}),
	$$
	where $S_{(s,\bar m)}$ is the maximal orbifold structure with respect to $F:U_A\to S\setminus A$. Similarly, let $K_{A\cup\{P\}}$ be the kernel of
	$$
	F_*:\pi_1(U_{A\cup\{P\}})\to\pi_1^{\orb}(S_{(s+1,\bar m)}).
	$$
	Suppose moreover that $P\notin B_F$. Then,
	\begin{enumerate}
	\item\label{item:iso} The epimorphism $\pi_1(U_{A\cup\{P\}})\to \pi_1(U_A)$ induced by inclusion yields an isomorphism between  $K_{A\cup\{P\}}$ and $K_A$.
	\item\label{item:ext} If $$1\to K_A\to \pi_1(U_A)\xrightarrow{F_*}\pi_1^{\orb}(S_{(s,\bar m)})\to 1$$ is a central (resp. purely central) extension, so is $$1\to K_{A\cup\{P\}}\to \pi_1(U_{A\cup\{P\}})\xrightarrow{F_*}\pi_1^{\orb}(S_{(s+1,\bar m)})\to 1,$$ where $S_{(s+1,\bar m)}$ is the maximal orbifold structure on $S$ with respect to $F:U_{A\cup\{P\}}\to S\setminus(A\cup\{P\})$.
	\end{enumerate}
\end{thm}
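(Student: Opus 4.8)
The plan is to compare the two extensions defining $K_A$ and $K_{A\cup\{P\}}$ by means of the commutative ladder whose top row is the extension $1\to K_{A\cup\{P\}}\to\pi_1(U_{A\cup\{P\}})\xrightarrow{F_*}\pi_1^{\orb}(S_{(s+1,\bar m)})\to 1$, whose bottom row is $1\to K_A\to\pi_1(U_A)\xrightarrow{F_*}\pi_1^{\orb}(S_{(s,\bar m)})\to 1$, and whose vertical arrows are $\iota$ (the restriction to the kernels), the epimorphism $\rho\colon\pi_1(U_{A\cup\{P\}})\to\pi_1(U_A)$ induced by the inclusion $U_{A\cup\{P\}}\hookrightarrow U_A$, and the orbifold quotient $\tau\colon\pi_1^{\orb}(S_{(s+1,\bar m)})\to\pi_1^{\orb}(S_{(s,\bar m)})$ that fills in the puncture at $P$ (note $P\notin B_F\supseteq M_F$, so the multiple points and hence $\bar m$ are unchanged). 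Since $P\notin B_F$, the fibre $C_P=F^{-1}(P)$ is smooth and connected, hence irreducible, so by Lemma~\ref{lemma:meridians-generators} $\ker\rho=N$ is the normal closure of a single meridian $\mu$ about $C_P$; likewise $\ker\tau=M$ is the normal closure of $\mu_P:=F_*(\mu)$, which generates a free cyclic factor of $\pi_1^{\orb}(S_{(s+1,\bar m)})$. As $F_*$ is onto, $F_*(N)=M$. A short diagram chase then reduces part~(\ref{item:iso}) to a single statement: \emph{$\iota$ is an isomorphism if and only if $F_*|_N\colon N\to M$ is injective}. Indeed, lifting an element of $K_A$ along $\rho$ and correcting it by an element of $N$ (possible since $F_*(N)=M$) shows $\iota$ is onto, while $\ker\iota=N\cap K_{A\cup\{P\}}=\ker(F_*|_N)$.

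The heart of the proof is thus $N\cap K_{A\cup\{P\}}=\{1\}$, i.e.\ that $F_*|_N$ is injective. The key geometric input is that $P\notin B_F$, so the local monodromy of $F$ around $P$ is trivial: over a small disc $\Delta\ni P$ the map is a product, $F^{-1}(\Delta\setminus\{P\})\cong F^{-1}(Q)\times(\Delta\setminus\{P\})$ for a generic $Q$, and $\mu$ may be taken to be $\{x_0\}\times\partial\Delta$. I would first pass to the split fibration $1\to\pi_1(F^{-1}(Q))\to\pi_1(U_{(A\cup\{P\})\cup B_F})\xrightarrow{F_*}\pi_1(S\setminus((A\cup\{P\})\cup B_F))\to 1$ used just before Lemma~\ref{lem:presentationGeneral}, choosing the splitting so that $\mu$ is the image of the meridian $\mu_P$ about $P$ in the base. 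Triviality of the monodromy around $P$ means precisely that $\mu$ centralizes the generic-fibre subgroup $\pi_1(F^{-1}(Q))$; consequently every conjugate $g\mu g^{-1}$ lies in the image of the splitting, and the normal closure of $\mu$ maps isomorphically (with inverse the splitting) onto the normal closure of $\mu_P$ in the free base group. Descending through the quotients that produce $\pi_1(U_{A\cup\{P\}})$ and $\pi_1^{\orb}(S_{(s+1,\bar m)})$ from the fibration — relations supported on fibres over $B_F\setminus A$, hence disjoint from $P$ — then transfers this isomorphism to $F_*|_N\colon N\to M$. Equivalently, in the presentation of Lemma~\ref{lem:presentationGeneral} the relation~\ref{R1} attached to $\gamma_P=\mu$ reads $[\gamma_P,w]=1$ with trivial right-hand side exactly because $P\notin B_F$ (cf.\ Remark~\ref{rem:genericCommute}), so $\gamma_P$ imposes no relation among the kernel generators $\SF$, and imposing $\gamma_P=1$ to pass from $\pi_1(U_{A\cup\{P\}})$ to $\pi_1(U_A)$ leaves the subgroup $\langle\SF\rangle$ unchanged. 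I expect the bookkeeping in this descent — verifying that killing the extra meridian collapses $M$ but not $N$, for instance through a Reidemeister–Schreier analysis of $\langle\SF\rangle$ — to be the main technical obstacle.

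Part~(\ref{item:ext}) will then follow formally from part~(\ref{item:iso}). For centrality, take $k\in K_{A\cup\{P\}}$ and $g\in\pi_1(U_{A\cup\{P\}})$; then $\rho([k,g])=[\rho(k),\rho(g)]=1$ because $\rho(k)\in K_A$ is central by hypothesis, so $[k,g]\in N$, while $[k,g]\in K_{A\cup\{P\}}$ by normality of the kernel, whence $[k,g]\in N\cap K_{A\cup\{P\}}=\{1\}$ and $k$ is central. For the purely central case I would use the characterization recorded just after Definition~\ref{def:purelycentral}: abelianizing the ladder, the composite $K_{A\cup\{P\}}\to\pi_1(U_{A\cup\{P\}})^{\mathrm{ab}}\xrightarrow{\rho}\pi_1(U_A)^{\mathrm{ab}}$ equals the inclusion $K_A\hookrightarrow\pi_1(U_A)^{\mathrm{ab}}$ precomposed with the isomorphism $\iota$, and this inclusion is injective precisely by the purely central hypothesis on the bottom row. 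Hence $K_{A\cup\{P\}}\to\pi_1(U_{A\cup\{P\}})^{\mathrm{ab}}$ is injective, i.e.\ $K_{A\cup\{P\}}\cap\pi_1(U_{A\cup\{P\}})'=\{1\}$, so the top extension is purely central as well.
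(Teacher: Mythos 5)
Your reductions are correct and the architecture is sound: part~(\ref{item:iso}) does reduce, by the diagram chase you describe, to the single claim $N\cap K_{A\cup\{P\}}=\{1\}$, and both halves of part~(\ref{item:ext}) then follow formally exactly as you argue (your commutator trick $[k,g]\in N\cap K_{A\cup\{P\}}$ for centrality is in fact slicker than the paper's generator-by-generator check). Your monodromy computation over a punctured disc around $P$ is also correct as far as it goes; it is the geometric content behind Remark~\ref{rem:genericCommute}, i.e., behind the fact that the relations~\ref{R1} attached to a meridian over $P\notin B_F$ have trivial right-hand side.

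The genuine gap is that the claim on which everything hinges is never actually proved. Your splitting argument gives injectivity of $F_*$ on the normal closure of $\mu$ only upstairs, in $\pi_1(U_{(A\cup\{P\})\cup B_F})$; the descent to $\pi_1(U_{A\cup\{P\}})$ is not formal, since it amounts to showing that the section of the fibration carries $M'\cap R_H$ into $R_G$ (where $M'$ is the normal closure of $\mu_P$ upstairs and $R_G$, $R_H$ are the kernels of the two vertical quotient maps), and checking that requires precisely the local data around the fibres over $B_F\setminus A$ that you label ``bookkeeping''. Your alternative formulation via Lemma~\ref{lem:presentationGeneral} is the right one, but there too you defer the verification (to a Reidemeister--Schreier analysis). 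The paper closes exactly this point with one observation that makes any such analysis unnecessary: because $P\notin B_F$, the \emph{only} relations in the presentation of $\pi_1(U_{A\cup\{P\}})$ containing the letter $\gamma_P$ are $[\gamma_P,w]=1$, $w\in\SF$. Consequently the presentation of $\pi_1(U_A)$ obtained via Lemma~\ref{lemma:meridians-generators} (add the relation $\gamma_P=1$, delete the generator) has relation set literally \emph{contained} in that of $\pi_1(U_{A\cup\{P\}})$, so sending each generator of $\SF\cup\SB\setminus\{\gamma_P\}$ to itself defines a homomorphism $\sigma\colon\pi_1(U_A)\to\pi_1(U_{A\cup\{P\}})$ with $\rho\circ\sigma=\mathrm{id}$. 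This section is injective and carries $K_A=\langle\SF\rangle$ onto $K_{A\cup\{P\}}=\langle\SF\rangle$ (both kernels are generated by $\SF$ by Lemma~\ref{lem:exact}, Lemma~\ref{lem:presentationGeneral} and maximality of the two orbifold structures, which share $\bar m$ since $P\notin B_F$), whence $\rho_|\colon K_{A\cup\{P\}}\to K_A$ is an isomorphism --- exactly your missing claim. With that section inserted, your proof is complete and essentially coincides with the paper's.
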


\begin{proof}
	Let $\bar m=(m_1,\ldots,m_n)$, $n\geq 0$.  Let $A=\{P_2,\ldots, P_{s+1}\}$, and let $B=A\cup\{P\}=\{P_1:=P,P_2,\ldots,P_{s+1}\}$. Consider the presentation of $\pi_1(U_B)$ with generators $\SF\cup \SB$ given by Lemma~\ref{lem:presentationGeneral}, where $\gamma_1\in\SB$ is a meridian about the irreducible curve $F^{-1}(P)$. By Lemma~\ref{lemma:meridians-generators}, one obtains a presentation of $\pi_1(U_A)$ by adding $\gamma_1$ to the set of relations and using that to remove $\gamma_1$ from the set of generators. This gives a presentation of $\pi_1(U_A)$ with generators $\SF\cup\SB\setminus\{\gamma_1\}$. Moreover, since $P\notin B_F$, if $R$ is set of relations of the presentation of $\pi_1(U_B)=\pi_1(U_{A\cup\{P\}})$, then the set of relations of $\pi_1(A)$ is $R\setminus \{[\gamma_1,w]\mid w\in\SF\}$. Note that no word in $R\setminus \{[\gamma_1,w]\mid w\in\SF\}$ contains the letter $\gamma_1$.

Let $p:\pi_1(U_{A\cup\{P\}})\to\pi_1(U_A)$ be the projection induced by the inclusion $U_{A\cup\{P\}}\hookrightarrow U_A$, which, at the level of fundamental groups, corresponds to the quotient by the normal subgroup generated by $\gamma_1$. Using the presentations that we considered for both $\pi_1(U_{A\cup\{P\}})$ and $\pi_1(U_A)$, we obtain that $p$ splits via a monomorphism $\sigma: \pi_1(U_A)\hookrightarrow \pi_1(U_{A\cup\{P\}})$ which takes the generating set $\SF\cup\SB\setminus\{\gamma_1\}$ to itself via the identity.

By Lemma~\ref{lem:exact}, Lemma~\ref{lem:presentationGeneral}, and the fact that $S_{(s+1,\bar m)}$ is the maximal orbifold structure on $S$ with respect to $F:U_{A\cup\{P\}}\to S\setminus(A\cup\{P\})$, we get that $K_{A\cup\{P\}}$ (resp. $K_A$) is the subgroup of $\pi_1(U_{A\cup\{P\}})$ (resp. $\pi_1(U_A)$) generated by the subset of generators $\SF$ of $\pi_1(U_{A\cup\{P\}})$ (resp. $\pi_1(U_A)$). Hence, $\sigma$ induces an epimorphism $\sigma_|:K_A\to K_{A\cup\{P\}}$. Since $\sigma$ is injective, $\sigma_|$ is an isomorphism and $p_|:K_{A\cup\{P\}}\to K_A$ is its inverse. This concludes the proof of part \eqref{item:iso}.

%
	
	Let us prove part~\eqref{item:ext}. Suppose that $K_A$ is a central subgroup of $\pi_1(U_A)$, or in other words, every element of $\SF$ commutes with every element of $\SF\cup\SB\setminus\{\gamma_1\}$. Using $\sigma$ and the fact that $\gamma_1$ commutes with every element of $\SF$ in $\pi_1(U_{B\cup\{P\}})$, we obtain that $K_{A\cup\{P\}}$ is a central subgroup of $\pi_1(U_{A\cup\{P\}})$.
	
%
	
	Finally, suppose that $K_A$ is a purely central subgroup. Let us show that the intersection of $K_{A\cup\{P\}}$ with the derived subgroup of $\pi_1(U_{A\cup\{P\}})$ is trivial. Note that the isomorphism $p_|:K_{A\cup\{P\}}\to K_A$ takes the intersection of $K_{A\cup\{P\}}$ with the derived subgroup of $\pi_1(U_{A\cup\{P\}})$ to the intersection of $K_A$ with the derived subgroup of $\pi_1(U_{A})$, which is trivial by hypothesis. This concludes the proof of part~\eqref{item:ext}.
\end{proof}

We will not need the following theorem in the proof of the Main Theorem~\ref{thm:newgeneric}, but we include it as a natural counterpart to Theorem~\ref{thm:additiongeneral}.
\begin{thm}[Deletion]\label{thm:deletion}
	Let $U$ be a smooth quasi-projective variety, let $S$ be a smooth projective curve and let $F:U\to S$ be an admissible map. Let $B\subset S$ be a non-empty set with $\# B=s\geq 1$ and let $P\in B$. Denote by $U_B:=U\setminus F^{-1}(B)$ (resp. $U_{B\setminus\{P\}}:=U\setminus F^{-1}(B\setminus\{P\})$). Let $K_B$ be the kernel of
	$$
	F_*:\pi_1(U_B)\to\pi_1^{\orb}(S_{(s,\bar m)}),
	$$
	where $S_{(s,\bar m)}$ is the maximal orbifold structure with respect to $F:U_B\to S\setminus B$. Let $K_{B\setminus\{P\}}$ the kernel of
	$$
	F_*:\pi_1(U_{B\setminus\{P\}})\to\pi_1^{\orb}(S_{(s-1,\bar m')}),
	$$
	where $S_{(s-1,\bar m')}$ is the maximal orbifold structure with respect to $F:U_{B\setminus\{P\}}\to S\setminus (B\setminus\{P\})$.
	
	Then, $K_{B\setminus\{P\}}$ is a quotient of $K_B$ by the restriction of the epimorphism $p:\pi_1(U_B)\to\pi_1(U_{B\setminus\{P\}})$ induced by inclusion.
	
	In particular, if $K_B$ is finite/cyclic/central in $\pi_1(U_B)$/purely central in $\pi_1(U_{B})$, then $K_{B\setminus\{P\}}$ is finite/cyclic/central in $\pi_1(U_{B\setminus\{P\}})$/purely central in $\pi_1(U_{B\setminus\{P\}})$ respectively.
\end{thm}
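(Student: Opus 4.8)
The plan is to realize the inclusion-induced map as a morphism between the two orbifold short exact sequences of Lemma~\ref{lem:exact} and to read all assertions off the resulting commutative ladder. Write $B':=B\setminus\{P\}$, so that $U_B=U_{B'}\setminus F^{-1}(P)$ and the inclusion $U_B\hookrightarrow U_{B'}$ induces the epimorphism $p:\pi_1(U_B)\to\pi_1(U_{B'})$. By Lemma~\ref{lemma:meridians-generators}, $p$ is surjective with $\ker p$ equal to the normal closure $\langle\langle\gamma_{D_1},\dots,\gamma_{D_t}\rangle\rangle$ of meridians about the irreducible components $D_1,\dots,D_t$ of $F^{-1}(P)$. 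On the base, the two maximal orbifold structures differ only at $P$: passing from $S_{(s,\bar m)}$ to $S_{(s-1,\bar m')}$ fills in the puncture $P$, either as a smooth point (if $P\notin M_F$) or as a point of multiplicity $m_P$ (if $P\in M_F$), yielding a natural epimorphism $\bar p:\pi_1^{\orb}(S_{(s,\bar m)})\to\pi_1^{\orb}(S_{(s-1,\bar m')})$ with $\ker\bar p=\langle\langle\mu_P^{m_P}\rangle\rangle$, where $m_P$ is the multiplicity of $F^*(P)$ (so $m_P=1$ exactly when $P\notin M_F$). Naturality of the orbifold morphism gives $F_*\circ p=\bar p\circ F_*$, so $p$ and $\bar p$ assemble into a commutative ladder of the two exact sequences of Lemma~\ref{lem:exact}, with both vertical maps surjective.

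First I would prove the quotient statement. Commutativity gives $F_*(p(K_B))=\bar p(F_*(K_B))=1$, so $p$ restricts to $\alpha:=p|_{K_B}:K_B\to K_{B'}$. The single computation that matters is $F_*(\ker p)=\ker\bar p$: a meridian $\gamma_{D_j}$ satisfies $F_*(\gamma_{D_j})=\mu_P^{m_{D_j}}$, where $m_{D_j}$ is the multiplicity of $D_j$ in $F^*(P)$; since $\gcd_j(m_{D_j})=m_P$, the elements $\mu_P^{m_{D_j}}$ generate $\langle\mu_P^{m_P}\rangle$, and because $\ker p$ is normal and $F_*$ surjective, $F_*(\ker p)$ is a normal subgroup containing $\langle\mu_P^{m_P}\rangle$, hence containing $\langle\langle\mu_P^{m_P}\rangle\rangle=\ker\bar p$; the reverse inclusion is immediate from commutativity. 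Surjectivity of $\alpha$ is then a diagram chase: given $y\in K_{B'}$, lift it to $x\in\pi_1(U_B)$ with $p(x)=y$; then $F_*(x)\in\ker\bar p=F_*(\ker p)$, so there is $z\in\ker p$ with $F_*(z)=F_*(x)$, whence $xz^{-1}\in K_B$ and $\alpha(xz^{-1})=p(xz^{-1})=y$. Thus $\alpha$ is an epimorphism and $K_{B'}\cong K_B/\ker\alpha$, which is the main assertion. (A direct verification also gives the exact sequence $1\to\ker\alpha\to\ker p\xrightarrow{F_*}\ker\bar p\to 1$, which will be useful below.)

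The first three ``in particular'' statements follow formally from surjectivity of $\alpha$ and of $p$. If $K_B$ is finite (resp.\ cyclic), so is its quotient $K_{B'}=\alpha(K_B)$. If $K_B$ is central in $\pi_1(U_B)$, then, writing an arbitrary element of $\pi_1(U_{B'})$ as $p(g)$ and an arbitrary element of $K_{B'}$ as $p(k)$ with $k\in K_B$, we get $[p(g),p(k)]=p([g,k])=1$, so $K_{B'}$ is central in $\pi_1(U_{B'})$.

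The delicate case, and the step I expect to be the main obstacle, is preservation of pure centrality. Here I would use the characterization recorded after Definition~\ref{def:purelycentral}: a central extension is purely central if and only if it induces a short exact sequence on abelianizations. Applying the abelianization functor to the ladder produces a commutative diagram of right-exact sequences with all vertical maps surjective, in which the top row is short exact by hypothesis; a snake-lemma argument then reduces the injectivity of $K_{B'}\hookrightarrow\pi_1(U_{B'})^{\mathrm{ab}}$ to the identity $\iota_B(K_B)\cap\ker(p^{\mathrm{ab}})=\iota_B(K_B\cap\ker p)$ inside $\pi_1(U_B)^{\mathrm{ab}}$, where $\iota_B:K_B\hookrightarrow\pi_1(U_B)^{\mathrm{ab}}$ is injective by hypothesis. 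Concretely, since $\ker(p^{\mathrm{ab}})$ is generated by the meridian classes $[\gamma_{D_j}]$ and $\ker(F_*^{\mathrm{ab}})=\iota_B(K_B)$, this asks that the only $\ZZ$-combinations $\sum a_j[\gamma_{D_j}]$ with $\sum a_j m_{D_j}=0$ that lie in $\iota_B(K_B)$ are those already coming from $K_B\cap\ker p$. This is the one point where the combinatorics of the fiber $F^{-1}(P)$ intervene: the multiple-fiber relations of type \ref{R4} in Lemma~\ref{lem:presentationGeneral} are exactly what make $\ker\alpha$ (hence the admissible combinations) nontrivial, and I expect to control the remaining combinations using the explicit kernel $\ker(\bar p^{\mathrm{ab}})=\langle m_P[\mu_P]\rangle$ together with the connectedness of $F^{-1}(P)$, which is what prevents its components from contributing spurious independent classes to $\iota_B(K_B)$. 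Establishing this intersection identity, rather than the formal diagram chasing, is the crux of the proof.
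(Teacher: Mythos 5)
Your proof of the quotient statement and of the finite/cyclic/central assertions is correct, but note that it takes a genuinely different and heavier route than the paper. The paper's proof is three lines: by the exactness in Lemma~\ref{lem:exact}, both $K_B$ and $K_{B\setminus\{P\}}$ are the \emph{images} of $\pi_1(F^{-1}(Q))$ for a common generic fiber $Q\in S\setminus(B\cup B_F)$, so the factorization $F^{-1}(Q)\hookrightarrow U_B\hookrightarrow U_{B\setminus\{P\}}$ immediately gives $p(K_B)=K_{B\setminus\{P\}}$. You instead work on the base side: the ladder over $\bar p$, the identification $\ker\bar p=\langle\langle\mu_P^{m_P}\rangle\rangle$, the meridian computation $F_*(\gamma_{D_j})\sim\mu_P^{m_{D_j}}$ with $\gcd_j m_{D_j}=m_P$, and a diagram chase. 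All of this is valid (and yields the extra exact sequence $1\to\ker\alpha\to\ker p\to\ker\bar p\to 1$), but it substitutes a nontrivial computation for what the paper gets for free from exactness at the middle term of Lemma~\ref{lem:exact}.

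The genuine gap is the purely central case: you reduce it to an ``intersection identity'' in $H_1(U_B)$ and then only say you \emph{expect} to establish it using the connectedness of $F^{-1}(P)$; that is not a proof, and in fact no proof along those lines can exist, because when $B\setminus\{P\}=\emptyset$ the identity (and the purely central conclusion itself) can fail. Concretely, let $S=E$ be an elliptic curve, $L$ a line bundle of degree $1$ on $E$, $U$ the complement of the zero section in the total space of $L$, and $F:U\to E$ the projection (an admissible map with all fibers $\CC^*$ and $B_F=M_F=\emptyset$). Then $\pi_1(U)$ is the discrete Heisenberg group $\langle a,b,z: [a,b]=z,\ z\ \text{central}\rangle$ and $K_\emptyset=\langle z\rangle$ is central but equal to the commutator subgroup, hence \emph{not} purely central; whereas for $B=\{P\}$ one has $U_B\simeq(E\setminus\{P\})\times\CC^*$, $\pi_1(U_B)\cong\FF_2\times\ZZ$, and $K_B\cong\ZZ$ \emph{is} purely central. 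So deleting the unique point of $B$ destroys pure centrality; your worry about this case was well founded (and the paper's own ``the result follows'' also silently requires $B\setminus\{P\}\neq\emptyset$ here). When $B\setminus\{P\}\neq\emptyset$, the clean way to close your gap bypasses the intersection identity entirely: in that case $\pi_1^{\orb}(S_{(s-1,\bar m')})$ is a free product of cyclic groups, whose Schur multiplier vanishes, and the five-term exact sequence $H_2(G)\to H_2(H)\to K\to G^{\mathrm{ab}}\to H^{\mathrm{ab}}\to 0$ associated to a central extension $1\to K\to G\to H\to 1$ shows that $H_2(H)=0$ forces $K\to G^{\mathrm{ab}}$ to be injective. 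Thus centrality of $K_{B\setminus\{P\}}$, which you already established formally, implies its pure centrality with no further geometric input.
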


\begin{proof}
	Let $Q\in S\setminus (B\cup B_F)$. The commutativity of the diagram
	$$
	\begin{tikzcd}
	F^{-1}(Q)\arrow[r,hook]\arrow[d,"\mathrm{Id}"] & U_{B}\arrow[d,hook]\\
	F^{-1}(Q)\arrow[r,hook] & U_{B\setminus\{P\}}
	\end{tikzcd}
	$$
	and the exactness of the sequences of Lemma~\ref{lem:exact} corresponding to $U_{B}$ and $U_{B\setminus\{P\}}$ imply that $p$ restricts to an epimorphism $p_|:K_B\to K_{B\setminus\{P\}}$, and the result follows.
\end{proof}

\section{Characterization of Oka's $G(p;q;r)$ groups}\label{s:Okagroups}

Oka shows that the groups from Definition~\ref{def:Gpqr} have the following first three properties, while the last one is a
consequence of the presentation in equation \eqref{eq:presGpqr}.
\begin{prop}[Theorem 2.12 in \cite{Oka-genericRjoin}]\label{prop:center}
	Let $s=\gcd(p,q)$ and let $a=\gcd\left(\frac{q}{s}, r \right)$. Then,
	\begin{enumerate}
		\item\label{part:center} The center $Z(G(p;q;r))$ of $G(p;q;r)$ contains the finite cyclic group  $\ZZ_{\frac{r}{a}}$.
		\item\label{part:purely}  $\ZZ_{\frac{r}{a}}$ is purely central in $G(p;q;r)$.
		\item\label{part:proj} The quotient group $G(p;q;r)/\ZZ_{\frac{r}{a}}$ is isomorphic to $\FF_{s-1}*\ZZ_{\frac{p}{s}}*\ZZ_a$, where $\frac{p}{s}$ and $a$ are coprime.
		\item The quotient group $G(p;q;r)/G(p;q;r)'$ is isomorphic to $\ZZ_{r\cdot\frac{p}{s}}\times \ZZ^{s-1}$, where $G(p;q;r)'$ is the commutator subgroup of $G(p;q;r)$.
	\end{enumerate}
\end{prop}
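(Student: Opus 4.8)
The plan is to take the first three parts as the properties established by Oka in~\cite{Oka-genericRjoin}, and to prove the remaining assertion---the computation of the abelianization---directly from the presentation~\eqref{eq:presGpqr}.

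First I would abelianize, writing the group operation additively. In $G(p;q;r)/G(p;q;r)'$ every pair of generators commutes, so the defining relation $\omega=a_{p-1}\cdots a_1 a_0$ becomes $\omega=\sum_{i=0}^{p-1}a_i$, the relation $\omega^r=e$ becomes $r\omega=0$, the periodicity relations $R_1$ become $a_i=a_{i+q}$, and the conjugacy relations $R_2$ collapse to $a_i=a_{i+p}$, since conjugation is trivial after abelianizing.

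Next, the combination of $R_1$ and $R_2$ forces $a_i$ to depend only on the class of $i$ modulo $s=\gcd(p,q)$: the subgroup of $\ZZ$ generated by $p$ and $q$ is $s\ZZ$, so $a_i=a_j$ whenever $i\equiv j\pmod{s}$. Hence the abelianization is generated by $a_0,\dots,a_{s-1}$ together with $\omega$. Since each residue class modulo $s$ occurs exactly $p/s$ times among $\{0,1,\dots,p-1\}$, the relation coming from the definition of $\omega$ reads
$$
\omega=\frac{p}{s}\sum_{i=0}^{s-1}a_i.
$$
Using this identity to eliminate $\omega$, the single surviving relation is $r\omega=\frac{rp}{s}\sum_{i=0}^{s-1}a_i=0$, whence
$$
G(p;q;r)/G(p;q;r)'\cong \ZZ^{s}\Big/\Big\langle \tfrac{rp}{s}(1,1,\dots,1)\Big\rangle.
$$

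Finally, to identify this quotient I would observe that the lattice of relations is generated by an integer multiple of the primitive vector $(1,1,\dots,1)\in\ZZ^{s}$, primitive because the greatest common divisor of its coordinates is $1$. Completing $(1,1,\dots,1)$ to a $\ZZ$-basis of $\ZZ^{s}$---equivalently, reducing the $1\times s$ relation matrix to Smith normal form---produces one generator of order $\frac{rp}{s}$ and $s-1$ free generators, giving $\ZZ_{r\cdot p/s}\times\ZZ^{s-1}$ as claimed. The only step demanding genuine care is this last one: one must check that the relation is supported on a primitive vector, so that no spurious torsion is introduced in the remaining $s-1$ coordinates. That primitivity of $(1,\dots,1)$ is the single point where the computation could go astray if overlooked.
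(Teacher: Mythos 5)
Your proposal takes essentially the same route as the paper: the paper likewise quotes Oka's Theorem 2.12 for parts (1)--(3) and simply remarks that part (4) ``is a consequence of the presentation in equation~\eqref{eq:presGpqr}'', which is precisely the abelianization computation you carry out in detail (collapsing $a_i$ to its class modulo $s=\gcd(p,q)$, eliminating $\omega$ via $\omega=\frac{p}{s}\sum_{i=0}^{s-1}a_i$, and reducing the single relation $\frac{rp}{s}(1,\dots,1)$ by primitivity of $(1,\dots,1)$). Your computation is correct, so the only difference is that you supply the details the paper leaves implicit.
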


We now show that the properties in Proposition~\ref{prop:center} completely determine the isomorphism class of the group $G(p;q;r)$. This is the main result of this section, and it will be a key element in proving the Main Theorem~\ref{thm:newgeneric}.

\begin{thm}\label{thm:OkaUnique}
	Let $n$, $m_1$ and $m_2$ be positive integers, where $\gcd(m_1,m_2)=1$. Let $r$ be a non-negative integer. Let
	$$
	H:=\langle x_1,\ldots,x_r,a,b: a^{m_1}=b^{m_2}, a^{m_1n}=1, [a^{m_1},x_i]=1\,\forall\,i=1,\ldots,r\rangle.
	$$
	Up to isomorphism, $H$ is the only group  such that the following hold:
	\begin{enumerate}
		\item\label{item:central} There exists a central extension of the form
		$$
		1\to \ZZ_n \to H\to \FF_{r}*\ZZ_{m_1}*\ZZ_{m_2}\to 1.
		$$
		\item\label{item:quotient} The quotient $H/H'$ is isomorphic to $\ZZ_{n m_1 m_2}\times\ZZ^r$,  where $H'$ is the commutator subgroup of $H$.
	\end{enumerate}
	In particular, $H\cong G((r+1)m_1;(r+1)m_2;nm_2)$, and, if $n$ is coprime with $m_1m_2$, then $H\cong \ZZ_n\times \left(\FF_{r}*\ZZ_{m_1}*\ZZ_{m_2}\right)$.
\end{thm}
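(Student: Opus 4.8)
The plan is to establish the two halves of the statement separately: first that $H$ itself satisfies properties~\ref{item:central} and~\ref{item:quotient}, and then that these two properties determine $H$ up to isomorphism; the two ``in particular'' identifications then follow quickly. For the existence half, I set $z:=a^{m_1}$. The first relation gives $z=b^{m_2}$, so $z$ is simultaneously a power of $a$ and of $b$; together with $a^{m_1n}=1$ and $[a^{m_1},x_i]=1$ this shows $z$ is central and $z^n=1$. Killing $z$ collapses the presentation to $\langle x_1,\dots,x_r,a,b : a^{m_1}=1,\ b^{m_2}=1\rangle=\FF_r*\ZZ_{m_1}*\ZZ_{m_2}$, so $H/\langle z\rangle\cong\FF_r*\ZZ_{m_1}*\ZZ_{m_2}$. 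To pin down the order of $z$ and verify~\ref{item:quotient}, I abelianize: $H/H'$ splits as the free part $\ZZ^r$ generated by the $x_i$ together with the group presented in $\bar a,\bar b$ by the integer matrix with rows $(m_1,-m_2)$ and $(m_1n,0)$, whose Smith normal form has invariant factors $1$ and $m_1m_2n$ (here $\gcd(m_1,m_2)=1$ is used). Thus $H/H'\cong\ZZ^r\times\ZZ_{nm_1m_2}$, which is property~\ref{item:quotient}, and a direct check that the image of $z=a^{m_1}$ in the cyclic factor has order exactly $n$ shows $\langle z\rangle\cong\ZZ_n$, giving property~\ref{item:central}.

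For uniqueness, let $G$ fit into a central extension as in~\ref{item:central}, with central kernel generated by $t$, so $t^n=1$. I choose lifts $x_1,\dots,x_r,a,b\in G$ of the standard generators of $\FF_r*\ZZ_{m_1}*\ZZ_{m_2}$. Since $a^{m_1}$ and $b^{m_2}$ die in the quotient, they lie in $\langle t\rangle$, say $a^{m_1}=t^{\alpha}$ and $b^{m_2}=t^{\beta}$ for integers $\alpha,\beta$. I would first show these are the only relations needed, i.e. that $G\cong\tilde G:=\langle x_i,a,b,t: t^n,\ [t,-],\ a^{m_1}t^{-\alpha},\ b^{m_2}t^{-\beta}\rangle$: the evident surjection $\tilde G\twoheadrightarrow G$ is a morphism of central extensions of $\FF_r*\ZZ_{m_1}*\ZZ_{m_2}$ inducing the identity on quotients, and its restriction to the central kernels is a surjection $\ZZ_n\to\ZZ_n$ (since $t^n=1$ in $\tilde G$ while $t$ has order exactly $n$ in $G$), hence an isomorphism; the five lemma then gives $\tilde G\cong G$.

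With this presentation, imposing property~\ref{item:quotient} becomes a Smith-normal-form computation for the $3\times3$ matrix with rows $(0,0,n)$, $(m_1,0,-\alpha)$, $(0,m_2,-\beta)$: its finite part has order $nm_1m_2$ automatically, and it is cyclic precisely when the gcd of the $2\times2$ minors $\{nm_1,nm_2,m_1m_2,m_1\beta,m_2\alpha\}$ equals $1$. A prime-by-prime analysis (again using $\gcd(m_1,m_2)=1$) shows this is equivalent to $\alpha$ being a unit modulo $\gcd(n,m_1)$ and $\beta$ a unit modulo $\gcd(n,m_2)$. Finally I normalize: replacing $a$ by $a^{u}t^{j}$ with $\gcd(u,m_1)=1$ replaces $\alpha$ by $u\alpha+jm_1\pmod n$, and the unit condition together with $\gcd(m_1,n)\mid 1-u\alpha$ lets me pick $u,j$ with $a^{m_1}=t$; symmetrically I arrange $b^{m_2}=t$. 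Eliminating $t=a^{m_1}$ turns the presentation of $G$ into exactly that of $H$, so $G\cong H$.

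For the ``in particular'' part, I would check that $G((r+1)m_1;(r+1)m_2;nm_2)$ satisfies~\ref{item:central} and~\ref{item:quotient} by specializing Proposition~\ref{prop:center}: here $\gcd\big((r+1)m_1,(r+1)m_2\big)=r+1$, $\gcd(m_2,nm_2)=m_2$, and $nm_2/m_2=n$, so parts~\ref{part:purely}--\ref{part:proj} give the central extension by $\ZZ_n$ with quotient $\FF_{r}*\ZZ_{m_1}*\ZZ_{m_2}$, while part~(4) gives abelianization $\ZZ_{nm_1m_2}\times\ZZ^{r}$; uniqueness then yields the isomorphism. When $\gcd(n,m_1m_2)=1$ the two unit conditions are vacuous and $H^2(\FF_{r}*\ZZ_{m_1}*\ZZ_{m_2};\ZZ_n)=0$, so the central extension splits, and a central split kernel makes the extension a direct product $H\cong\ZZ_n\times(\FF_{r}*\ZZ_{m_1}*\ZZ_{m_2})$. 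I expect the main obstacle to be the uniqueness argument: showing that property~\ref{item:quotient} is exactly the cyclicity condition on $(\alpha,\beta)$, and then that every admissible pair can be normalized to $(1,1)$, i.e. that the a priori several extension classes compatible with~\ref{item:quotient} all give the same abstract group.
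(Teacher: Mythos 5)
Your proposal is correct, and its existence half coincides with the paper's argument: both come down to the same abelianization computation (the paper phrases it as computing the order of $a$ in $H/H'$ rather than via Smith normal form). The uniqueness half, however, takes a genuinely different route. The paper never writes a presentation of $G$: it observes that conditions (1) and (2) together force the extension to be purely central, deduces that $\psi^{-1}(\ZZ_{m_1})$ and $\psi^{-1}(\ZZ_{m_2})$ embed into the abelianization $\ZZ_{nm_1m_2}\times\ZZ^r$ and are therefore cyclic of orders $nm_1$ and $nm_2$, picks a generator $\hat a$ of the first (so that $\hat a^{m_1}$ automatically generates the central kernel), uses a Chinese Remainder argument to choose a generator $\hat b$ of the second with $\hat b^{m_2}=\hat a^{m_1}$, and only at the very end applies the five lemma to the resulting map $H\to G$. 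You instead apply the five lemma at the outset to identify $G$ with the explicit two-parameter group $\tilde G(\alpha,\beta)$, translate condition (2) via Smith normal form into the statement that $\alpha$ and $\beta$ are units modulo $\gcd(n,m_1)$ and $\gcd(n,m_2)$ (your minor computation is correct), and then normalize the lifts to reach $(\alpha,\beta)=(1,1)$. Your route buys an explicit classification of exactly which central extensions satisfy (2), which the paper's structural argument keeps invisible; the paper's route buys brevity, since cyclicity of the preimages hands it the good generators with no normalization step. Two points you should still write out: the existence of $u$ with $\gcd(u,m_1)=1$ and $u\alpha\equiv 1\pmod{\gcd(n,m_1)}$ is asserted in a way that reads circularly, and needs a small CRT argument (take $u\equiv\alpha^{-1}\pmod{\gcd(n,m_1)}$ and $u\equiv 1$ modulo each prime of $m_1$ not dividing $\gcd(n,m_1)$) --- the same kind of lemma the paper invokes when adjusting $\hat b$; and when replacing $a$ by $a^{u}t^{j}$ you should note that $\bar a\mapsto\bar a^{u}$ induces an automorphism of $\FF_r*\ZZ_{m_1}*\ZZ_{m_2}$, so your five-lemma identification still applies to the new lifts. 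Your cohomological argument for the final claim (vanishing of $H^2$ when $\gcd(n,m_1m_2)=1$, hence splitting) is a valid alternative to the paper's implicit one, which is simply to check that $\ZZ_n\times\left(\FF_r*\ZZ_{m_1}*\ZZ_{m_2}\right)$ satisfies (1) and (2) and invoke uniqueness.
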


\begin{proof}
	Let us show that the sequence
	\begin{equation}\label{eq:sesH}
		\begin{array}{cccccclcl}
			1&\to & \ZZ_n & \to & H & \to &\FF_r*\ZZ_{m_1}*\ZZ_{m_2}&\to& 1\\
			& & 1&\mapsto &a^{m_1} \\
			& & & & a &\mapsto & t\\
			& & & & b &\mapsto & u\\
			& & & & x_i &\mapsto & y_i,\quad  i=1,\ldots, r\\
		\end{array}
	\end{equation}
	is exact, where $y_1,\ldots,y_r$ are free generators of the $\FF_r$ free factor, $t$ is a generator of the $\ZZ_{m_1}$ free factor, and $u$ is a generator of the $\ZZ_{m_2}$ free factor. By the presentation of $H$, the order of $a$ divides $m_1n$, and it is exactly $m_1n$ because that is the order of $a$ in the abelianization of $H$, which is isomorphic to $\ZZ_{nm_1m_2}\times\ZZ^r$. Hence, the map $\ZZ_n\to H$ is injective, and its image is the subgroup generated by $a^{m_1}$, which is central. The induced map $H/\langle a^{m_1}\rangle\to \FF_r*\ZZ_{m_1}*\ZZ_{m_2}$ is an isomorphism, so the sequence \eqref{eq:sesH} is exact. Hence, $H$ is a group satisfying conditions~\eqref{item:central} and~\eqref{item:quotient} in the statement of this proposition.
	
	Let $G$ be a group satisfying those same two conditions. In particular, there exists a central extension of the form
	\begin{equation}\label{eq:sesG}
		1\to \ZZ_n\xrightarrow{\iota} G\xrightarrow{\psi}\FF_r*\ZZ_{m_1}*\ZZ_{m_2}\to 1.
	\end{equation}
	
	Consider the subgroup $\psi^{-1}(\ZZ_{m_1})$. The short exact sequence \eqref{eq:sesG} induces the short exact sequence in the top row of the following commutative diagram. The vertical arrows are inclusions followed by the abelianization morphism, and the bottom row is exact by condition~\eqref{item:quotient} (and the fact that condition~\eqref{item:central} and~\eqref{item:quotient} together imply that the extension in equation~\eqref{eq:sesG} is purely central).
	$$
	\begin{tikzcd}
		1\arrow[r]& \ZZ_n\arrow[r,"\iota"]\arrow[d,"\mathrm{Id}_{\ZZ_n}"]& \psi^{-1}(\ZZ_{m_1})\arrow[r, "\psi"]\arrow[d,hook]& \ZZ_{m_1}\arrow[r]\arrow[d,hook] & 1\\
		1\arrow[r]& \ZZ_n\arrow[r,"\iota"]\arrow[d,"\mathrm{Id}_{\ZZ_n}"]& G\arrow[r, "\psi"]\arrow[d]& \FF_r*\ZZ_{m_1}*\ZZ_{m_2}\arrow[r]\arrow[d] & 1\\
		1\arrow[r]& \ZZ_n\arrow[r]& \ZZ_{nm_1m_2}\times\ZZ^r\arrow[r]& \ZZ_{m_1m_2}\times\ZZ^r\arrow[r] & 1\\
	\end{tikzcd}
	$$
	Note that the composition of the vertical arrows on the right is injective, as is the composition of the vertical arrows on the left. Hence, $\psi^{-1}(\ZZ_{m_1})$ is a finite group of order $nm_1$ which embeds into $\ZZ_{nm_1m_2}\times\ZZ^r$, and therefore it is cyclic. Let $\hat a$ be a generator of $\psi^{-1}(\ZZ_{m_1})$. Similarly, $\psi^{-1}(\ZZ_{m_2})$ is cyclic of order $nm_2$. Note that $\langle \hat a\rangle \cap \psi^{-1}(\ZZ_{m_2}) =\langle \hat a^{m_1}\rangle=\iota(\ZZ_n)$, so in particular $\hat a^{m_1}$ is a central element. By a similar argument, it is possible to pick a generator $\hat b$ of $\psi^{-1}(\ZZ_{m_2})$ such that $\langle \hat b^{m_2}\rangle=\iota(\ZZ_n)$. Consider the epimorphism $\psi^{-1}(\ZZ_{m_2})\twoheadrightarrow \iota(\ZZ_n)$ sending $\hat b$ to $\hat b^{m_2}$. Note that, after suitable group isomorphisms in both the domain and the target, this epimorphism is just the natural ring projection $\ZZ_{m_2n}\twoheadrightarrow\ZZ_n$, which induces an epimorphism between the groups of units (in this case, the set of cyclic generators of the additive group) by the Chinese Remainder Theorem. Hence, it is possible to pick $\hat b$ in a way such that $\hat b^{m_2}=\hat a^{m_1}$, and we assume this from now on.
	
	Let $\hat x_i$ be an element of $G$ such that $\psi(\hat x_i)=y_i$ for all $i=1,\ldots, r$. There is a unique group homomorphism $\phi:H\to G$ sending $a$ to $\hat a$, $b$ to $\hat b$ and $x_i$ to $\hat x_i$ for all $i=1,\ldots,r$. Moreover, $\phi$ is an isomorphism by the $5$-lemma applied to the following commutative diagram, where the top and bottom rows are the short exact sequences in equations \eqref{eq:sesH} and \eqref{eq:sesG} respectively:
	$$
	\begin{tikzcd}
		1\arrow[r]& \ZZ_n\arrow[r]\arrow[d,"\mathrm{Id}_{\ZZ_n}"]& H\arrow[r]\arrow[d, "\phi"]& \FF_r*\ZZ_{m_1}*\ZZ_{m_2}\arrow[r]\arrow[d, "\mathrm{Id}_{\FF_r*\ZZ_{m_1}*\ZZ_{m_2}}"] & 1\\
		1\arrow[r]& \ZZ_n\arrow[r,"\iota"]& G\arrow[r, "\psi"]& \FF_r*\ZZ_{m_1}*\ZZ_{m_2}\arrow[r] & 1
	\end{tikzcd}
	$$
	
\end{proof}

\begin{exam}\label{ex:cyclic}
	Theorem~\ref{thm:OkaUnique} implies that, for every pair of positive integers $m,k$, then $G(m,1,k)\cong G(1,1,mk)\cong G(mk,1,1)\cong\ZZ_{mk}$.
\end{exam}

The following is a consequence of Proposition~\ref{prop:center} and  Theorem~\ref{thm:OkaUnique}.
\begin{cor}[The isomorphism problem for $G(p;q;r)$]\label{cor:isoGpqr}
	Let $p,q,r$ be positive integers, let $s=\gcd(p,q)$ and let $a=\gcd\left(\frac{q}{s},r\right)$. Then,
	\begin{equation}\label{eq:isoGpqr}
		G(p;q;r)\cong G(p;as;r)\cong G\left(as;p;\frac{r}{a}\cdot\frac{p}{s}\right).
	\end{equation}
	In particular, there exists a representative $G(p';q';r')$ of the isomorphism class of
	$G(p;q;r)$ with $p'\geq q'$ and  $\frac{q'}{\gcd(p',q')}\mid r'$, namely one of the last two
	groups in equation~\eqref{eq:isoGpqr}. Moreover, if $G(p;q;r)$ is not finite cyclic, such representative is unique.
\end{cor}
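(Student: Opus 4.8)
The plan is to obtain the two isomorphisms in~\eqref{eq:isoGpqr} as instances of the characterization in Theorem~\ref{thm:OkaUnique}, and then to leverage that same characterization to pin down the normalized representative. First I would set $s=\gcd(p,q)$ and $a=\gcd(q/s,r)$. By Proposition~\ref{prop:center}, $G(p;q;r)$ fits into a central extension $1\to\ZZ_{r/a}\to G(p;q;r)\to \FF_{s-1}*\ZZ_{p/s}*\ZZ_a\to 1$ with $\gcd(p/s,a)=1$, and $G(p;q;r)^{ab}\cong\ZZ_{r\cdot p/s}\times\ZZ^{s-1}$. These are exactly conditions~\eqref{item:central} and~\eqref{item:quotient} of Theorem~\ref{thm:OkaUnique} for the data $(n,m_1,m_2,r_0)=(r/a,\,p/s,\,a,\,s-1)$, since $n m_1 m_2=(r/a)(p/s)a=r\cdot p/s$ and the free rank is $s-1$; moreover the presentation of $H$, hence the conclusion of the theorem, is symmetric in $m_1$ and $m_2$. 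Applying the \emph{in particular} clause $H\cong G((r_0+1)m_1;(r_0+1)m_2;n m_2)$ with $r_0+1=s$ and with the two orderings $(m_1,m_2)=(p/s,a)$ and $(m_1,m_2)=(a,p/s)$ then yields $G(p;q;r)\cong G(p;as;r)$ and $G(p;q;r)\cong G(as;p;\tfrac{r}{a}\cdot\tfrac{p}{s})$ respectively.

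For existence of the normalized representative I would first record $\gcd(p,as)=s\gcd(p/s,a)=s$. Hence for $G(p;as;r)$ one gets $\tfrac{as}{\gcd(p,as)}=a\mid r$, and for $G(as;p;\tfrac{r}{a}\tfrac{p}{s})$ one gets $\tfrac{p}{\gcd(as,p)}=p/s$, which divides $\tfrac{r}{a}\tfrac{p}{s}$. Thus both of the latter two groups satisfy the divisibility condition $\tfrac{q'}{\gcd(p',q')}\mid r'$, and at least one of $p\geq as$, $as\geq p$ holds (both hold only if $p=as$, in which case $a=1$, $p=s$, and the two triples coincide). Selecting the one with $p'\geq q'$ gives a representative as claimed.

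For uniqueness, the strategy is to show that, when $G:=G(p;q;r)$ is not finite cyclic, the data $(s-1,\{m_1,m_2\},n)=(s-1,\{p/s,a\},r/a)$ is an isomorphism invariant of $G$. This suffices because a normalized triple is recovered from the data by $s'=\gcd(p',q')$, $\{p'/s',q'/s'\}=\{m_1,m_2\}$ (ordered so that $p'\geq q'$) and $r'=n\cdot\min(m_1,m_2)$, using that the normalization $\tfrac{q'}{s'}\mid r'$ forces $a'=q'/s'$. Now $s-1$ is recovered as $\rk(G^{ab})$ and $N:=n m_1 m_2$ as the order of the torsion of $G^{ab}$, so it remains to separate $n$ from $m_1m_2$. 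If $G$ is non-abelian, then the base $Q:=\FF_{s-1}*\ZZ_{m_1}*\ZZ_{m_2}$ is non-abelian: the extension is purely central (Theorem~\ref{thm:OkaUnique}), so $Q$ abelian would give $G'\subseteq\iota(\ZZ_n)$ together with $G'\cap\iota(\ZZ_n)=1$, forcing $G$ abelian. A non-abelian free product of cyclics has at least two nontrivial free factors, hence trivial center, so $Z(Q)=1$ and therefore $Z(G)=\iota(\ZZ_n)$; this recovers $n=\#Z(G)$ and $Q\cong G/Z(G)$, from which $s-1$ and the multiset $\{m_1,m_2\}$ follow by uniqueness of the decomposition into freely indecomposable factors (Kurosh). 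If instead $G$ is abelian but not finite cyclic, then $Q$ is abelian, which forces $\{m_1,m_2\}=\{1,1\}$ and $s-1=1$, so $G\cong\ZZ\times\ZZ_n$ and $n=N$ is read directly off $G^{ab}$, the triple being $(2,2,n)$. In all cases the data, and hence the normalized representative, is unique.

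I expect the main obstacle to be the uniqueness statement, and within it the intrinsic recovery of the central order $n$. The crucial input is that for a non-abelian Oka group the base quotient $Q$ has trivial center, so that $n=\#Z(G)$; this is exactly what fails for finite cyclic groups, where $Q$ is cyclic with nontrivial center and the splitting of $N$ into $n$ and $m_1m_2$ is genuinely ambiguous (compare Example~\ref{ex:cyclic}), and it is the reason the finite cyclic case must be excluded. The only remaining delicate point is the boundary case $G\cong\ZZ\times\ZZ_n$, where $Q=\ZZ$ still has nontrivial center but the invariants are nonetheless forced by $G^{ab}$ alone.
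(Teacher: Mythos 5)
Your proposal is correct and follows essentially the same route as the paper: the two isomorphisms come from matching Proposition~\ref{prop:center} against the characterization in Theorem~\ref{thm:OkaUnique}, and uniqueness comes from showing that $\left(s,\frac{r}{a},\left\{\frac{p}{s},a\right\}\right)$ is an isomorphism invariant by identifying $\ZZ_{r/a}$ as a characteristic subgroup (the center in the non-abelian case, read off the abelianization in the abelian non-finite-cyclic case) and invoking uniqueness of free product decompositions. The only differences are cosmetic: you split cases by whether $G$ is abelian rather than whether the quotient $\FF_{s-1}*\ZZ_{p/s}*\ZZ_a$ is cyclic, and you spell out the recovery of the normalized triple from the invariant data, which the paper leaves implicit.
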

\begin{proof}
	The part of the statement before the word ``moreover'' follows directly from Proposition~\ref{prop:center} and Theorem~\ref{thm:OkaUnique}.
	
	If $G(p;q;r)$ satisfies that $\FF_{s-1}*\ZZ_{\frac{p}{s}}*\ZZ_a$ as in Proposition~\ref{prop:center}\eqref{part:proj} is not cyclic, then $G(p;q;r)$ is not abelian and $\ZZ_{\frac{r}{a}}$ as in Proposition~\ref{prop:center}\eqref{part:center} is the center of $G(p;q;r)$. If $G(p;q;r)$ satisfies that $\FF_{s-1}*\ZZ_{\frac{p}{s}}*\ZZ_a$ as in Proposition~\ref{prop:center}\eqref{part:proj} is infinite cyclic, then $\ZZ_{\frac{r}{a}}$ as in Proposition~\ref{prop:center}\eqref{part:center} is the torsion subgroup of $G(p;q;r)\cong \ZZ_{\frac{r}{a}}\times\ZZ$ by Theorem~\ref{thm:OkaUnique} (note that $G(p;q;r)$ is abelian in that case). Lastly, if $G(p;q;r)$ satisfies that $\FF_{s-1}*\ZZ_{\frac{p}{s}}*\ZZ_a$ as in Proposition~\ref{prop:center}\eqref{part:proj} is finite cyclic, then $G(p;q;r)$ is also finite cyclic by Theorem~\ref{thm:OkaUnique}. Hence, if $G(p;q;r)$ is not finite cyclic, then $\ZZ_{\frac{r}{a}}$ is a characteristic subgroup of $G(p;q;r)$.
	
	For $i=1,2$, let $p_i,q_i,r_i$ be positive integers, let $s_i=\gcd(p_i,q_i)$, and let $a_i=\gcd\left(\frac{q_i}{s_i}, r_i\right)$. Suppose that $
	G(p_1;q_1;r_1)\cong G(p_2;q_2;r_2)
	$ and that those groups are not finite cyclic. By Proposition~\ref{prop:center},  Theorem~\ref{thm:OkaUnique}, and the discussion in the previous paragraph,
	$$
	\left(s_1,\frac{r_1}{a_1},S_1\right)=\left(s_2,\frac{r_2}{a_2},S_2\right),
	$$
	where $S_i=\left\{a_i,\frac{p_i}{s_i}\right\}$, $i=1,2$. The uniqueness result follows from this. 
\end{proof}

In Example~\ref{ex:Oka} we saw that the group $G(p;q;r)$ can be realized as the fundamental group of a plane fiber-type curve complement for all $p,q,r\geq 1$. In fact, the only plane curves that may have $G(p;q;r)$ as the fundamental group of their complements are fiber-type curves, as seen in the following result. For simplicity with the notation, we write $\pi_1^{\orb}\left(\PP^1_{\left(s,\left(\frac{p}{s},a\right)\right)}\right)$, even though the coordinates of $\left(\frac{p}{s},a\right)$ may equal to $1$, to mean that the multiplicities $\geq 2$ in the orbifold structure are the coordinates of $\left(\frac{p}{s},a\right)$ which are $\geq 2$.

\begin{prop}\label{prop:GpqrImpliesFiber}
Let $C\subset \PP^2$ be a curve such that $\pi_1(\PP^2\setminus C)$ is isomorphic to $G(p;q;r)$. Then, $C$ is a fiber-type curve
associated to a pencil $F$ satisfying Condition~\ref{cond} such that the kernel of
$$
F_*:\pi_1(\PP^2\setminus C)\to \pi_1^{\orb}\left(\PP^1_{\left(s,\left(\frac{p}{s},a\right)\right)}\right)\cong \FF_{s-1}*\ZZ_{\frac ps}*\ZZ_a
$$
is a purely central subgroup of $\pi_1(\PP^2\setminus C)$ which is isomorphic to $\ZZ_{\frac ra}$, where $s=\gcd(p,q)$,
$a=\gcd\left(\frac qs, r\right)$ and $\PP^1_{\left(s,\left(\frac{p}{s},a\right)\right)}$ is the maximal orbifold structure
on $\PP^1$ with respect to $F:\PP^2\setminus C\to\PP^1$. Moreover, the irreducible components of $C$ are distinct fibers
of the pencil~$F$.
\end{prop}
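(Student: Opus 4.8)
The plan is to reverse-engineer the fiber structure on $\PP^2\setminus C$ directly from the algebraic structure of $G(p;q;r)$, which by Proposition~\ref{prop:center} is a purely central extension of $\FF_{s-1}*\ZZ_{\frac ps}*\ZZ_a$ by $\ZZ_{\frac ra}$. The key external input is the work of Arapura and its refinements relating orbifold quotients of quasi-projective fundamental groups to admissible pencils. First I would recall that $X:=\PP^2\setminus C$ is smooth quasi-projective with $\pi_1(X)\cong G(p;q;r)$, and form the distinguished quotient
$$
\pi_1(X)\surj \FF_{s-1}*\ZZ_{\frac ps}*\ZZ_a\cong \pi_1^{\orb}\left(\PP^1_{\left(s,\left(\frac ps,a\right)\right)}\right)
$$
coming from Proposition~\ref{prop:center}\eqref{part:proj}, whose kernel is the purely central cyclic group $\ZZ_{\frac ra}$ by parts~\eqref{part:center} and~\eqref{part:purely}. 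The heart of the argument is to show this algebraic orbifold quotient is \emph{geometric}, i.e.\ realized by an actual admissible pencil $F:\PP^2\dashrightarrow\PP^1$; for this I would invoke the standard correspondence (Arapura~\cite{Arapura-geometry}, as used throughout the authors' earlier work) between surjections of $\pi_1(X)$ onto orbifold fundamental groups of curves with nonabelian (or suitably large) image and morphisms $X\to S$ onto orbifold curves. The free product structure guarantees the target is a genuinely hyperbolic-type or at least non-abelian orbifold when $\FF_{s-1}*\ZZ_{\frac ps}*\ZZ_a$ is non-cyclic, supplying the pencil $F$.

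Once the pencil $F:\PP^2\dashrightarrow\PP^1$ is produced, I would verify it satisfies Condition~\ref{cond}. After Stein factorization (Remark~\ref{rem:Stein}) I may assume $F$ has connected generic fibers, and by Remark~\ref{rem:2mult} the multiple fibers sit over at most two points of $\PP^1$, which after a coordinate change I place over $\{[0:1],[1:0]\}$; this matches the multiplicities $\frac ps$ and $a$ appearing in the orbifold base, recovering the data $p,q,k$ of Condition~\ref{cond}. The identification $\ke(F_*)\cong\ZZ_{\frac ra}$ as a purely central subgroup is then exactly the content of Lemma~\ref{lem:exact} combined with Proposition~\ref{prop:center}: the exact sequence
$$
\pi_1(F^{-1}(P))\to\pi_1(\PP^2\setminus C)\xrightarrow{F_*}\pi_1^{\orb}\left(\PP^1_{\left(s,\left(\frac ps,a\right)\right)}\right)\to 1
$$
identifies the target with the free product quotient, and its kernel must coincide with the algebraically distinguished $\ZZ_{\frac ra}$, since by Corollary~\ref{cor:isoGpqr} this central cyclic factor is characteristic once $G(p;q;r)$ is not finite cyclic.

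Finally, to conclude that $C$ is fiber-type with its irreducible components being distinct fibers of $F$, I would analyze the meridians of $C$. By Lemma~\ref{lemma:meridians-generators} the meridians about the components of $C$ normally generate $\pi_1(\PP^2\setminus C)$, and the number of free $\ZZ$-factors together with $H_1$ (via Lemma~\ref{lem:coprime} and Proposition~\ref{prop:center}\eqref{part:proj}, giving $H_1\cong\ZZ_{r\frac ps}\times\ZZ^{s-1}$) forces the rank count to match $s$ components lying over distinct base points; each component maps to a single meridian-generator of a $\ZZ$ or torsion factor of the orbifold base, which pins it to a fiber of $F$. \textbf{The main obstacle} I anticipate is the first geometric step: ensuring the purely central orbifold quotient is actually induced by a morphism rather than merely an abstract surjection, and in particular handling the degenerate cases where $\FF_{s-1}*\ZZ_{\frac ps}*\ZZ_a$ is cyclic (so $G(p;q;r)$ is abelian), where the Arapura-type realization theorem does not directly apply and one must instead argue that $C$ is a single irreducible fiber-type curve by hand, perhaps appealing to Lemma~\ref{lem:coprime} and the classification of abelian such groups in Example~\ref{ex:cyclic}.
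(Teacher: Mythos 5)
Your plan for the main case is essentially the paper's own proof. When $\FF_{s-1}*\ZZ_{\frac{p}{s}}*\ZZ_a$ is non-cyclic (more precisely, when it is the orbifold fundamental group of a curve of negative orbifold Euler characteristic, i.e.\ $s=1$ with $\min\{\frac{p}{s},a\}>1$, or $s=2$ with $\max\{\frac{p}{s},a\}>1$, or $s>2$), the paper applies the realization theorem \cite[Theorem 3.5]{ji-Eva-GOMP} to the epimorphism $\psi$ of Proposition~\ref{prop:center}\eqref{part:proj}, obtaining an admissible map $F:\PP^2\setminus C\to\PP^1\setminus\{s \text{ points}\}$ whose induced map on (orbifold) fundamental groups coincides with $\psi$ up to isomorphism in the target; the fibers over the $s$ marked points are then contained in $C$ and, since $C$ has exactly $s$ irreducible components (Lemma~\ref{lem:coprime} plus Proposition~\ref{prop:center}), they exhaust them. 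This also makes your meridian-counting argument and the appeal to characteristicness via Corollary~\ref{cor:isoGpqr} unnecessary: the realization theorem already identifies $F_*$ with $\psi$, hence identifies the kernel.

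The genuine gap is in the degenerate cases, which you correctly flag as the obstacle but then misdescribe. First, ``$\FF_{s-1}*\ZZ_{\frac{p}{s}}*\ZZ_a$ cyclic'' does \emph{not} imply that $C$ is a single irreducible curve: when $s=2$ and $\frac{p}{s}=a=1$ the quotient is $\FF_1\cong\ZZ$, the group is the abelian group $\ZZ_{\frac{r}{a}}\times\ZZ$ by Theorem~\ref{thm:OkaUnique}, and $C$ has \emph{two} irreducible components. No Arapura-type theorem applies here, and the paper instead takes the pencil to be the generalized Albanese morphism $\alpha:\PP^2\setminus C\to\CC^*$ (proof of \cite[Lem. 2.26]{ji-Eva-orbifold}); your sketch has no mechanism for this case. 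Second, in the irreducible abelian cases ($s=1$, $\min\{p,a\}=1$), ``arguing by hand'' must actually \emph{produce} a pencil satisfying Condition~\ref{cond} with $C$ as a fiber --- that is the entire content of the proposition there, and Example~\ref{ex:cyclic} only classifies the abstract groups, it yields no pencil. The paper's constructions are concrete: with $C=V(f)$ and $m=\max\{a,p\}>1$, take $g$ irreducible of degree $\frac{r}{a}$ and set $F=[f:g^m]$, using \cite[Lem. 2.6]{ji-Eva-Zariski} for connectedness of the generic fibers and the observation that a second multiple fiber would force the abelian group $\pi_1(\PP^2\setminus C)$ to surject onto a free product of two nontrivial cyclic groups; and when $m=1$, take $g$ of degree $r$ transversal to $C$, so that $F=[f:g]$ has connected generic fibers and no multiple fibers. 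Without these constructions (or substitutes for them), the statement remains unproved precisely in the cases where $\pi_1(\PP^2\setminus C)$ is abelian.
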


\begin{proof}
By Lemma~\ref{lem:coprime} and Proposition~\ref{prop:center}, $C$ has $s$ irreducible components and the greatest
common divisor of the degrees of its components is $r\cdot\frac{p}{s}$.

If $C$ is irreducible ($s=1$) and $m:=\max\left\{a,p\right\}>\min\left\{a,p\right\}=1$,
then Theorem~\ref{thm:OkaUnique} implies that $\pi_1(\PP^2\setminus C)\cong \ZZ_{m\cdot\frac{r}{a}}$. In that case,
we may construct the pencil $F$ as follows: Let $f$ be an irreducible polynomial defining $C$, which has degree
$m\cdot\frac{r}{a}$, and let $g$ be any irreducible polynomial of degree $\frac{r}{a}$. The pencil
$F=[f:g^{m}]$ has connected generic fibers by \cite[Lem. 2.6]{ji-Eva-Zariski}. Note that $F$ has no more multiple fibers
other than $V(g^m)$ or else the abelian group $\pi_1(\PP^2\setminus C)$ would surject onto a free product of two
non-trivial cyclic groups. Hence, $F$ satisfies Condition~\ref{cond} and the statement follows.

If $s=m=a=p=1$, then $\pi_1(\PP^2\setminus C)\cong \ZZ_{r}$ and $g$ can be any irreducible polynomial of degree
$r$ such that $V(g)$ is transversal to $C$, that is, $\# V(g)\cap C=r^2$. The transversality condition implies that
the pencil $F=[f:g]$ has connected generic fibers and no multiple fibers.

If $C$ has two irreducible components and $\max\left\{\frac p2, a\right\}=1$,
then $\pi_1(\PP^2\setminus C)\cong\ZZ_{\frac{r}{a}}\times \ZZ$ by Theorem~\ref{thm:OkaUnique}. The generalized Albanese
variety of $\PP^2\setminus C$ is $\CC^*$ and the Albanese morphism $\alpha:\PP^2\setminus C\to \CC^*$ serves as our desired
pencil (see the proof of \cite[Lem. 2.26]{ji-Eva-orbifold}).

Now suppose that $C$ does not fall under any of the previous cases, that is, either $s=1$ and $\min\{\frac{p}{s},a\}>1$,
or $s=2$ and $\max\{\frac{p}{s},a\}>1$, or $s>2$. In all of these cases $\FF_{s-1}*\ZZ_{\frac ps}*\ZZ_a$ can be interpreted
as the orbifold fundamental group of a smooth quasi-projective curve of negative orbifold Euler characteristic.
Under such conditions one can apply \cite[Theorem 3.5]{ji-Eva-GOMP} to the epimorphism
$$\psi:\pi_1(\PP^2\setminus C)\cong G(p;q;r)\to \FF_{s-1}*\ZZ_{\frac ps}*\ZZ_a$$
from Proposition~\ref{prop:center}, to guarantee the existence of a surjective algebraic morphism with connected generic
fibers $F:\PP^2\setminus C\to\PP^1\setminus\{s \text{ points}\}$ such that the induced morphism at the level of
(orbifold) fundamental groups
$$
F_*:\pi_1(\PP^2\setminus C)\to \pi_1^{\orb}\left(\PP^1_{\left(s,\left(\frac{p}{s},a\right)\right)}\right)\cong \FF_{s-1}*\ZZ_{\frac ps}*\ZZ_a
$$
coincides with $\psi$ up to isomorphism in the target, where $\PP^1$ is endowed with the maximal orbifold structure
with respect to $F:\PP^2\setminus C\to \PP^1$. The map $F$ determines a pencil $F:\PP^2\dashrightarrow \PP^1$, and
the fibers of $F$ over the $s$ marked points in $\PP^1$ must be contained in $C$. Since $C$ only has $s$ irreducible
components, $C$ is the fiber-type curve corresponding to $F$ and the $s$ marked points in~$\PP^1$.
\end{proof}

We end this section by recalling the following result, which shows that the study of Oka's groups includes all of the
free products of cyclic groups which can be realized as fundamental groups of plane curve complements.

\begin{prop}[{\cite[Cor. 3.14]{ji-Eva-orbifold}}]
Let $C\subset \PP^2$ be a curve. If $\pi_1(\PP^2\setminus C)$ is isomorphic to a free product of cyclic groups, then
$\pi_1(\PP^2\setminus C)\cong G(sp;sq;q)\cong\FF_{s-1}*\ZZ_p*\ZZ_q,$
for some $p,q\in \ZZ_{>0}$ such that $\gcd(p,q)=1$, where $s$ is the number of irreducible components of $C$.
\end{prop}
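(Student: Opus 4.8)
The plan is to combine the abelianization of $\pi_1(\PP^2\setminus C)$ with the geometric fact that a pencil on $\PP^2$ has at most two multiple fibers. Write $G:=\pi_1(\PP^2\setminus C)$ and fix a decomposition $G\cong\FF_t*\ZZ_{m_1}*\cdots*\ZZ_{m_n}$ into a free product of cyclic groups, collecting the infinite cyclic factors into $\FF_t$ and taking $m_i\geq 2$. Abelianizing gives $G/G'\cong\ZZ^t\times\ZZ_{m_1}\times\cdots\times\ZZ_{m_n}$. On the other hand, Lemma~\ref{lem:coprime} yields $H_1(\PP^2\setminus C;\ZZ)\cong\ZZ^{s-1}\times\ZZ_d$, where $s$ is the number of irreducible components of $C$ and $d$ is the greatest common divisor of the degrees of the defining polynomials. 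Comparing free ranks forces $t=s-1$ (so in particular $s\geq 1$), and comparing torsion forces $\ZZ_{m_1}\times\cdots\times\ZZ_{m_n}$ to be cyclic; hence the $m_i$ are pairwise coprime and $d=m_1\cdots m_n$.

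Granting this, the entire statement reduces to the single claim $n\leq 2$. Indeed, once $n\leq 2$ I may pad the decomposition with trivial factors $\ZZ_1$ and write $G\cong\FF_{s-1}*\ZZ_p*\ZZ_q$ with $\gcd(p,q)=1$, taking $(p,q)=(1,1)$, $(m_1,1)$, or $(m_1,m_2)$ according to whether $n=0,1,2$ (the coprimality being exactly what the torsion computation provided). The isomorphism $G(sp;sq;q)\cong\FF_{s-1}*\ZZ_p*\ZZ_q$ recorded in the Introduction then concludes the argument.

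It remains to rule out $n\geq 3$, which I expect to be the main obstacle, as this is where the geometry of $\PP^2$ enters. Assume $n\geq 3$. Since $s\geq 1$, the group $G$ is the orbifold fundamental group $\pi_1^{\orb}(\PP^1_{(s,(m_1,\ldots,m_n))})$ of $\PP^1$ with $s$ punctures and $n$ orbifold points, whose orbifold Euler characteristic is $\chi^{\orb}=2-s-\sum_{i=1}^n(1-1/m_i)$. As each $m_i\geq 2$, this is at most $1-n/2$, which is negative because $n\geq 3$; thus the orbifold is hyperbolic. I can then apply \cite[Theorem 3.5]{ji-Eva-GOMP} to the identity epimorphism $G\to G\cong\pi_1^{\orb}(\PP^1_{(s,(m_1,\ldots,m_n))})$, exactly as in the proof of Proposition~\ref{prop:GpqrImpliesFiber}: it produces a pencil $F:\PP^2\dashrightarrow\PP^1$ with connected generic fibers (cf.\ Remark~\ref{rem:Stein}) whose induced map on orbifold fundamental groups, for the maximal orbifold structure, agrees with the identity up to isomorphism in the target. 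The orbifold points of the maximal structure are precisely the multiple fibers of $F$, so by Remark~\ref{rem:2mult} there are at most two of them. By the essential uniqueness of the decomposition of a group into a free product of freely indecomposable factors, the number of finite cyclic free factors is an invariant of $G$, so this count equals $n$. Hence $n\leq 2$, contradicting $n\geq 3$ and establishing the claim.
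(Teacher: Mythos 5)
Your argument closely parallels the proof this paper gives for Proposition~\ref{prop:GpqrImpliesFiber} (note the paper does not prove the statement in question at all; it cites it from \cite{ji-Eva-orbifold}, so that proof is the natural point of comparison). The abelianization step, the reduction to showing $n\le 2$, the hyperbolicity computation, and the appeal to \cite[Theorem 3.5]{ji-Eva-GOMP} applied to the identity epimorphism are all sound and match the paper's technique. However, there is a genuine gap in your final inference: Remark~\ref{rem:2mult} bounds the number of multiple fibers of the \emph{pencil}, i.e.\ of the extension $\widetilde F:\PP^2\setminus\mathcal B\to\PP^1$ defined off the base locus, whereas the orbifold points of the maximal orbifold structure with respect to $F:\PP^2\setminus C\to\PP^1\setminus\{s\text{ points}\}$ are the multiple fibers of $F$ computed as divisors on $\PP^2\setminus C$. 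These two notions can differ: if $C$ contains an irreducible component of a fiber of $\widetilde F$ lying over a point \emph{outside} the $s$ marked points, deleting that component can raise the multiplicity of what remains. For instance, for the pencil $[x^2y:z^3]$ the fiber $V(x^2y)$ is not a multiple fiber, but if $V(y)\subset C$ then $F^*([0:1])=2\,V(x)$ on $\PP^2\setminus C$, a $2$-multiple. So, a priori, the maximal structure could have more than two orbifold points even though the pencil has at most two multiple fibers, and your citation of Remark~\ref{rem:2mult} does not by itself close the argument.

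The gap is fixable, and the missing step is precisely the one that closes the paper's proof of Proposition~\ref{prop:GpqrImpliesFiber}. Since $F$ maps $\PP^2\setminus C$ into $\PP^1$ minus the $s$ marked points, the closures of the fibers of $\widetilde F$ over those points are pairwise disjoint curves contained in $C$, each containing at least one irreducible component of $C$; since your abelianization step shows that $C$ has exactly $s$ irreducible components, $C$ must be exactly the union of those $s$ fibers. Hence no component of $C$ lies in a fiber over a non-marked point, so for every $Q$ outside the marked set the multiplicity of $F^*(Q)$ on $\PP^2\setminus C$ coincides with that of the pencil fiber, and only then does Remark~\ref{rem:2mult} yield $n\le 2$. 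With this insertion your proof is complete and is essentially the same argument the authors use for Proposition~\ref{prop:GpqrImpliesFiber}.
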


\section{Main Theorem}\label{s:main}

Let us start by proving the Main Theorem~\ref{thm:newgeneric} in the case when $C_B$ is irreducible (i.e. $s=1$).

\begin{thm}\label{thm:1fiber}
Let $F=[f_{kp}^q:f_{kq}^p]:\PP^2\dashrightarrow \PP^1$ be a pencil satisfying Condition~\ref{cond}. Let $P\in\PP^1\setminus B_F$, and let $C_P=\overline{F^{-1}(P)}$. Then
	$$\pi_1(\PP^2\setminus C_P)\cong G(p;q;kq),$$
	and $F$ induces the following purely central extension
	$$
	1\to \ZZ_{k}\to \pi_1(\PP^2\setminus C_P)\xrightarrow{F_*} \pi_1^{\orb}(\PP^1_{(1,\bar m)})\cong \ZZ_p*\ZZ_q\to 1,
	$$
	where $\PP^1_{(1,\bar m)}$ is the maximal orbifold structure on $\PP^1$ with respect to $F:\PP^2\setminus C_P\to\PP^1\setminus\{P\}\subset\PP^1$.
\end{thm}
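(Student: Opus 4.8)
The plan is to realize $\pi_1(\PP^2\setminus C_P)$ as a purely central extension of $\ZZ_p*\ZZ_q$ by $\ZZ_k$ whose abelianization is $\ZZ_{kpq}$, and then to invoke the characterization of Oka's groups in Theorem~\ref{thm:OkaUnique}, applied with $r=0$, $m_1=p$, $m_2=q$ and $n=k$ (note $\gcd(p,q)=1$), to conclude $\pi_1(\PP^2\setminus C_P)\cong G((0+1)p;(0+1)q;kq)=G(p;q;kq)$. To set this up, write $U=\PP^2\setminus\mathcal B$ for the complement of the base locus $\mathcal B$ of $F$, so that $U$ is simply connected and $U_{\{P\}}=\PP^2\setminus C_P$ (Remark~\ref{rem:sc}). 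A direct inspection of the pulled-back divisors shows that the only candidate multiple fibers of $F=[f_{kp}^q:f_{kq}^p]$ are $V(f_{kp})$ over $[0:1]$ and $V(f_{kq})$ over $[1:0]$, and the hypotheses of Condition~\ref{cond} that neither $f_{kp}$ nor $f_{kq}$ is a proper power force their multiplicities to be exactly $q$ and $p$ respectively (the $\gcd$ of the multiplicities of the components of each equals $1$). Thus $\#M_F=2$ when $q\geq 2$, $\#M_F=1$ when $q=1<p$, and $\#M_F=0$ precisely when $p=q=1$. In the first two cases $M_F\neq\emptyset$, and the maximal orbifold structure on $\PP^1$ with respect to $F:\PP^2\setminus C_P\to\PP^1\setminus\{P\}$ has genus $0$, a single puncture and cone orders among $\{p,q\}$, so $\pi_1^{\orb}(\PP^1_{(1,\bar m)})\cong\FF_0*\ZZ_p*\ZZ_q=\ZZ_p*\ZZ_q$.

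Assume first $M_F\neq\emptyset$. Applying Lemma~\ref{lem:exact} to $F:\PP^2\setminus C_P\to\PP^1\setminus\{P\}$ shows that $F_*$ is onto with kernel $K_P$, and Lemma~\ref{lem:kernel1fiber} (whose hypotheses hold by Remark~\ref{rem:sc} together with $M_F\neq\emptyset$) gives that $K_P$ is abelian and central. This already produces a central extension
$$
1\to K_P\to\pi_1(\PP^2\setminus C_P)\xrightarrow{F_*}\ZZ_p*\ZZ_q\to 1.
$$
Independently, since $P\notin B_F$ the fiber $C_P$ is reduced and irreducible of degree $kpq$ (Bertini together with connectedness of the generic fiber), so Lemma~\ref{lem:coprime} yields $H_1(\PP^2\setminus C_P;\ZZ)\cong\ZZ_{kpq}$.

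It remains to identify $K_P$ with $\ZZ_k$ and to verify that the extension is purely central, which is the crux. Abelianizing the central extension gives a right-exact sequence $K_P\to\ZZ_{kpq}\to(\ZZ_p*\ZZ_q)^{\mathrm{ab}}\cong\ZZ_{pq}\to 0$, so the image of $K_P$ in $H_1$ is the subgroup of order $k$ and $\#K_P\geq k$. For the reverse inequality I would invoke the explicit presentation of Lemma~\ref{lem:presentation1fiber}: its abelianization realizes $H_1$ as $\ZZ^2$ modulo the rows $(pn_1,qn_2)$ and $(0,qn_3)$, so comparison with $H_1\cong\ZZ_{kpq}$ forces $\lvert n_1n_3\rvert=k$; on the other hand, setting $A=\gamma_1^p$ and $B=\gamma_2^q$, the central subgroup $K_P=\langle A,B\rangle$ is a quotient of the abelian group $\langle A,B\mid A^{n_1}B^{n_2},\,B^{n_3}\rangle$ of order $\lvert n_1n_3\rvert=k$, whence $\#K_P\leq k$. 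The two bounds give $K_P\cong\ZZ_k$, and the induced map $K_P\to H_1$ is then an isomorphism onto the order-$k$ subgroup; by the characterization of pure centrality in terms of abelianizations recalled before Theorem~\ref{thm:additiongeneral}, the extension is purely central. Theorem~\ref{thm:OkaUnique} now gives $\pi_1(\PP^2\setminus C_P)\cong G(p;q;kq)$. I expect the one genuine obstacle to be this upper bound $\#K_P\leq k$: the abelianization alone only pins down the \emph{image} of $K_P$ in $H_1$, and excluding a strictly larger central kernel requires the relational information encoded in Lemma~\ref{lem:presentation1fiber}.

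Finally, in the degenerate case $p=q=1$ there are no multiple fibers, the target orbifold group $\pi_1^{\orb}(\PP^1_{(1,\bar m)})$ is trivial, and $G(p;q;kq)=G(1;1;k)\cong\ZZ_k$. Here $K_P=\pi_1(\PP^2\setminus C_P)$, so by Remark~\ref{rem:genericCommute} a meridian $\gamma$ of the generic fiber $C_P$ is central; since $\gamma$ normally generates $\pi_1(\PP^2\setminus C_P)$ by Lemma~\ref{lemma:meridians-generators} (as $\PP^2$ is simply connected and $C_P$ is irreducible), the group is cyclic and hence isomorphic to $H_1\cong\ZZ_k$, completing this case.
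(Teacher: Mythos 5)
Your proof is correct and takes essentially the same route as the paper's, which establishes this statement as Lemmas~\ref{lem:purelycentral} and~\ref{lem:genericFiberNoMultiple}: the same case split on $M_F$, the same use of Lemmas~\ref{lem:exact}, \ref{lem:kernel1fiber}, \ref{lem:presentation1fiber} and~\ref{lem:coprime} to sandwich $K_P$ between an upper bound (quotient of an order-$k$ abelian group read off the presentation) and a lower bound (its image in $H_1$ has order $k$), deducing pure centrality from the injectivity of $K_P\to H_1$, and the same final appeal to Theorem~\ref{thm:OkaUnique}. One cosmetic caveat: your crux paragraph is phrased with the two-generator presentation, which Lemma~\ref{lem:presentation1fiber} provides only when $\#M_F=2$; in the subcase $q=1<p$ the presentation is $\langle\gamma_1:\gamma_1^{pn_1}\rangle$ and your argument degenerates to the paper's one-line treatment there ($n_1=k$, $K_P=\langle\gamma_1^{p}\rangle\cong\ZZ_k$, pure centrality being automatic since the group is abelian).
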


In Theorem~\ref{thm:1fiber}, note that $\bar m$ is empty if $p=q=1$, equal to $(p)$ if $p>q=1$, and equal to $(p,q)$ if $q>1$. The proof of Theorem~\ref{thm:1fiber} is done in two separate lemmas. We do it in two separate cases: Lemma~\ref{lem:purelycentral} and Lemma~\ref{lem:genericFiberNoMultiple} prove Theorem~\ref{thm:1fiber} in the cases where $M_F\neq \emptyset$ and $M_F= \emptyset$ respectively. Lemma~\ref{lem:purelycentral} is an extension of \cite[Theorem 4.13]{ji-Eva-orbifold} (which only addresses the case $k=1$).

\begin{lemma}\label{lem:purelycentral}
Theorem~\ref{thm:1fiber} holds if $M_F\neq\emptyset$ (i.e. $p>1$).
%
\end{lemma}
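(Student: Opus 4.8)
We need to prove Theorem~\ref{thm:1fiber} in the case $M_F\neq\emptyset$, i.e. $p>1$. Concretely: for a pencil $F=[f_{kp}^q:f_{kq}^p]$ satisfying Condition~\ref{cond} with $p>1$, and $C_P$ the closure of a generic fiber, we must show $\pi_1(\PP^2\setminus C_P)\cong G(p;q;kq)$, realized via a purely central extension with kernel $\ZZ_k$ and quotient $\pi_1^{\orb}(\PP^1_{(1,\bar m)})\cong\ZZ_p*\ZZ_q$.

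**The strategy.** The machinery is already in place. The plan is to invoke the characterization theorem (Theorem~\ref{thm:OkaUnique}) to identify the group. That theorem says: if a group sits in a central extension $1\to\ZZ_n\to H\to\FF_r*\ZZ_{m_1}*\ZZ_{m_2}\to 1$ and has abelianization $\ZZ_{nm_1m_2}\times\ZZ^r$, then $H\cong G((r+1)m_1;(r+1)m_2;nm_2)$. So the whole proof reduces to producing the right central extension and computing the abelianization. Let me sketch what I would check.

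Let me reason through the parameters. First I would set $U=\PP^2\setminus\mathcal B$ and apply Lemma~\ref{lem:kernel1fiber} and Lemma~\ref{lem:presentation1fiber} to $U_{\{P\}}=\PP^2\setminus C_P$. Since $U$ is simply connected (Remark~\ref{rem:sc}), the kernel $K_P$ of $F_*:\pi_1(\PP^2\setminus C_P)\to\pi_1^{\orb}(\PP^1_{(1,\bar m)})$ is a central subgroup. I would then identify the orbifold structure: the multiple fibers of $F$ lie over $\{[0:1],[1:0]\}$ with multiplicities determined by $f_{kp}^q$ and $f_{kq}^p$. Since $f_{kp}$ is not a proper power (Condition~\ref{cond}) and $\gcd(p,q)=1$, the multiplicity of the fiber over $[1:0]$ is $p$ and over $[0:1]$ is $q$ (recall $p>q$, and when $q=1$ there is only one multiple fiber). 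Thus $\bar m=(p,q)$ when $q>1$ and $\bar m=(p)$ when $q=1$, giving $\pi_1^{\orb}(\PP^1_{(1,\bar m)})\cong\ZZ_p*\ZZ_q$ (with $\ZZ_1$ trivial when $q=1$). I expect this translates to applying Theorem~\ref{thm:OkaUnique} with $r=0$, $m_1=p$, $m_2=q$, so that the target $G$ has the form $G(p;q;nq)$ with $n$ to be pinned down as $k$.

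**The two computations.** The substantive work is twofold. First, I must show the extension is \emph{purely central} with kernel exactly $\ZZ_k$. By Lemma~\ref{lem:kernel1fiber}, $K_P$ is generated by the $\gamma_{2g+i}^{m_i}$, and by Lemma~\ref{lem:presentation1fiber} (with $g=0$) the presentation is $\langle\gamma_1,\gamma_2:[\gamma_1,\gamma_2^q],[\gamma_2,\gamma_1^p],\gamma_1^{pn_1}\gamma_2^{qn_2},\gamma_2^{qn_3}\rangle$ (case $\#M_F=2$). Here the integers $n_1,n_2,n_3$ encode how the meridian relations close up, and the key geometric input is computing them in terms of $k$, $p$, $q$. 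I would extract these from the degree data: the pullback of a generic point, i.e. the divisor class relation $q\cdot\deg f_{kp}=p\cdot\deg f_{kq}$ combined with the total degree $kpq$ of the generic fiber, forces the exponents so that $K_P\cong\ZZ_k$. This is the step I expect to be the main obstacle — correctly reading off $n_1,n_2,n_3$ from the geometry of the pencil and confirming $K_P$ has order exactly $k$ and meets the commutator subgroup trivially. Second, with the central extension $1\to\ZZ_k\to\pi_1(\PP^2\setminus C_P)\to\ZZ_p*\ZZ_q\to 1$ established, I would compute the abelianization directly from the presentation (or via Lemma~\ref{lem:coprime}, noting $C_P$ is irreducible of degree $kpq$, so $H_1\cong\ZZ_{kpq}$), confirm it equals $\ZZ_{k\cdot p\cdot q}$ (matching $\ZZ_{nm_1m_2}$ with $n=k$, $r=0$), deduce pure centrality from the remark after Definition~\ref{def:purelycentral}, and finally apply Theorem~\ref{thm:OkaUnique} to conclude $\pi_1(\PP^2\setminus C_P)\cong G(p;q;kq)$. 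The identification $nm_2=kq$ and $(r+1)m_1=p$, $(r+1)m_2=q$ then matches the stated $G(p;q;kq)$ exactly.
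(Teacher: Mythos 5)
Your strategy is the paper's strategy: Lemmas~\ref{lem:kernel1fiber} and~\ref{lem:presentation1fiber} plus Remark~\ref{rem:sc} to get a central extension, then Theorem~\ref{thm:OkaUnique} with parameters $(n,m_1,m_2,r)=(k,p,q,0)$ to identify the group as $G(p;q;kq)$. But the one substantive step --- proving that $K_P$ has order \emph{exactly} $k$ and meets the commutator subgroup trivially --- is exactly the step you flag as ``the main obstacle'' and do not carry out, and the plan you sketch for it would stall. Abelianizing the presentation and using Lemma~\ref{lem:coprime} (i.e.\ $C_P$ irreducible of degree $kpq$, so $H_1\cong\ZZ_{kpq}$) does \emph{not} let you read off $n_1,n_2,n_3$ individually from the geometry; it only yields the constraints $n_1n_3=k$ and $\gcd(n_1p,n_2q,n_3q)=1$. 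Worse, even granting these constraints, the relations visible in the presentation only produce a \emph{surjection}
$$
\ZZ_k\cong\langle a,b \mid [a,b],\, a^{n_1}b^{n_2},\, b^{n_3}\rangle \twoheadrightarrow K_P=\langle\gamma_1^p,\gamma_2^q\rangle,
$$
since restricting the relations of a group to a subgroup's generators never gives more than an upper bound: a priori $K_P$ could be a proper quotient of $\ZZ_k$, and then Theorem~\ref{thm:OkaUnique} would identify the wrong group.

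The missing idea is the lower bound, which is the paper's key trick: the image of $K_P$ under the abelianization map $\pi_1(\PP^2\setminus C_P)\to H_1\cong\ZZ_{kpq}$ is precisely the kernel of the induced surjection $(F_*)^{\mathrm{ab}}:\ZZ_{kpq}\twoheadrightarrow(\ZZ_p*\ZZ_q)^{\mathrm{ab}}\cong\ZZ_{pq}$, hence is cyclic of order exactly $k$. So $K_P$ is sandwiched: it is a quotient of $\ZZ_k$ and it surjects onto a group of order $k$. Finiteness then forces both conclusions at once: $K_P\cong\ZZ_k$, \emph{and} the map $K_P\to H_1$ is injective, i.e.\ $K_P\cap\pi_1(\PP^2\setminus C_P)'=\{1\}$, which is pure centrality. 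Note in particular that your plan to ``deduce pure centrality from the remark after Definition~\ref{def:purelycentral}'' after the extension $1\to\ZZ_k\to\pi_1(\PP^2\setminus C_P)\to\ZZ_p*\ZZ_q\to 1$ is established cannot be run in that order: verifying the induced short exact sequence on abelianizations requires exactly the injectivity $K_P\cap\pi_1'=\{1\}$, which comes out of the same counting argument that pins down $|K_P|=k$; the two facts are not separable into your ``first'' and ``second'' computations. (In the case $\#M_F=1$, i.e.\ $q=1$, none of this is needed: Lemma~\ref{lem:presentation1fiber} already gives that the group is cyclic, and the paper treats it separately in one line.)
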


\begin{proof}
Let $K_P$ be the kernel of $F_*:\pi_1(\PP^2\setminus C_P)\to \pi_1^{\orb}\left(\PP^1_{(1,\bar m)}\right)$, where $\PP^1_{(1,\bar m)}$ is maximal with respect to $F_|:\PP^2\setminus C_P\to \PP^1\setminus \{P\}$.

If $\# M_F=1$, then $\bar m=(p)$ and $q=1$, so $\pi_1^{\orb}\left(\PP^1_{(1,\bar m)}\right)\cong\ZZ_p$. Lemma~\ref{lem:presentation1fiber} yields that $\pi_1(\PP^2\setminus C_P)\cong\ZZ_{pn_1}$ for some $n_1\in \ZZ$, and Lemma~\ref{lem:kernel1fiber} yields that $K_P\cong p\cdot \ZZ_{pn_1}\cong \ZZ_{n_1}$. The result follows from this observation.

If $\# M_F=2$, then $\bar m=(p,q)$ and $p,q>1$, so $\pi_1^{\orb}\left(\PP^1_{(1,\bar m)}\right)\cong\ZZ_p*\ZZ_q$. By Lemma~\ref{lem:presentation1fiber},
$$
\pi_1(\PP^2\setminus C_P)\cong \langle \gamma_1,\gamma_2: [\gamma_1,\gamma_2^{q}], [\gamma_2,\gamma_1^{p}], \gamma_1^{pn_1}\gamma_2^{qn_2},\gamma_2^{qn_3}\rangle
		$$
for some $n_1,n_2,n_3\in\ZZ$, and $K_P=\langle \gamma_1^p,\gamma_2^q\rangle$ by Lemma~\ref{lem:kernel1fiber}. Hence, $\pi_1(\PP^2\setminus C_P)$ is a central extension of
	$\ZZ_{p}*\ZZ_{q}$ by $K_P$. Let us show
	that~$K_P\cong\ZZ_k$.
	
	Note that the generic fiber of $F$ in $\PP^2\setminus \mathcal B$ is an irreducible curve of
	degree $kpq$. Abelianizing the presentation of $\pi_1(\PP^2\setminus C_P)$ and using Lemma~\ref{lem:coprime}, we obtain that $k=n_1n_3$ and that $\gcd(n_1p,n_2q,n_3q)=1$. Note that $K_P$ is a quotient of
	$\langle \gamma_1^{p},\gamma_2^{q}\mid [\gamma_1^p,\gamma_2^q], (\gamma_1^{p})^{n_1}(\gamma_2^{q})^{n_2},(\gamma_2^{q})^{n_3}\rangle\cong \ZZ_k$.
	Also, $K_P$ has a quotient $G$ (the image of composition of the inclusion into
	$\pi_1(\PP^2\setminus C_P)$ with the abelianization of $\pi_1(\PP^2\setminus C_P)$) which is
	isomorphic to  $\ZZ_k$: this is true because of the short exact sequence
	$$
	1\to G\to \ZZ_{kpq}\xrightarrow{(F_*)^{\mathrm{ab}}} \ZZ_{pq}\to 1.
	$$
	Since $\ZZ_k$ is finite, this implies both $K_P\cong\ZZ_k$ and that $K_P$ is a purely central subgroup. The result now follows from Theorem~\ref{thm:OkaUnique}.
\end{proof}

Now we address the case where there are no multiple fibers.

\begin{lemma}\label{lem:genericFiberNoMultiple}
	Theorem~\ref{thm:1fiber} holds if $M_F=\emptyset$ (i.e. $p=q=1$).
%
%
\end{lemma}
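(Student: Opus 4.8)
The plan is to prove Theorem~\ref{thm:1fiber} in the remaining case $p=q=1$, where $F=[f_k:f_k']$ is a pencil of degree-$k$ numerator and denominator (so the generic fiber has degree $k\cdot 1\cdot 1=k$ inside $\PP^2\setminus\mathcal B$), $M_F=\emptyset$, and the target orbifold $\PP^1_{(1,\bar m)}$ has empty $\bar m$. In this situation $\pi_1^{\orb}(\PP^1_{(1,\bar m)})$ is the orbifold fundamental group of $\PP^1$ with a single puncture and no orbifold points, hence trivial. The claim to be proved therefore reduces to showing $\pi_1(\PP^2\setminus C_P)\cong G(1;1;k)$, together with the fact that the induced central extension
$$
1\to \ZZ_k \to \pi_1(\PP^2\setminus C_P)\xrightarrow{F_*}\{1\}\to 1
$$
is purely central. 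Since $G(1;1;k)\cong\ZZ_k$ by Example~\ref{ex:cyclic}, the entire content is to establish that $\pi_1(\PP^2\setminus C_P)$ is the cyclic group $\ZZ_k$.

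First I would invoke the exact sequence of Lemma~\ref{lem:exact} (equivalently the presentation of Lemma~\ref{lem:presentationGeneral} specialized to $s=1$, $g=0$, $n=0$, $B=\{P\}$). Because $\pi_1^{\orb}(\PP^1_{(1,\bar m)})$ is trivial here, the sequence shows that $\pi_1(\PP^2\setminus C_P)$ coincides with its own kernel $K_P$; that is, $K_P=\pi_1(\PP^2\setminus C_P)$. The generating set $\SB$ contributes no generators of the quotient (the quotient is trivial), so $\pi_1(\PP^2\setminus C_P)$ is generated entirely by the loops in $\SF$, i.e.\ by the image of $\pi_1(F^{-1}(P))$. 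I would then argue, as in Remark~\ref{rem:genericCommute} (with $P\notin B_F$), that the meridian about the fiber $F^{-1}(P)$ commutes with every element of $K_P=\pi_1(\PP^2\setminus C_P)$, and moreover since the whole group is $K_P$, one gets that $\pi_1(\PP^2\setminus C_P)$ is abelian.

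Once abelianity is in hand the computation is finished by homology: by Lemma~\ref{lem:coprime}, since $C_P$ is irreducible (one component, $s=1$) defined by an irreducible polynomial of degree $k$ (the generic fiber has degree $k$ when $p=q=1$), we have $H_1(\PP^2\setminus C_P;\ZZ)\cong\ZZ_k$. Therefore $\pi_1(\PP^2\setminus C_P)\cong\ZZ_k\cong G(1;1;k)$. For the purely central statement, observe that with trivial quotient the extension $1\to \ZZ_k\to \pi_1(\PP^2\setminus C_P)\to 1\to 1$ is automatically central, and it is purely central because the derived subgroup of the abelian group $\pi_1(\PP^2\setminus C_P)$ is trivial, so $K_P\cap \pi_1(\PP^2\setminus C_P)'=\{1\}$ vacuously.

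The only genuine subtlety is verifying that the generic fiber $C_P$ is irreducible of degree exactly $k$ and that Lemma~\ref{lem:coprime} applies with $d=k$: this uses that $F$ has connected generic fibers (Condition~\ref{cond}), so $C_P$ is irreducible, and that its defining polynomial $\lambda f_k - \mu f_k'$ (for $P=[\mu:\lambda]$) is irreducible of degree $k$ for generic $P$. I expect the main obstacle to be making the reduction to abelianity fully rigorous, i.e.\ confirming that when the orbifold quotient is trivial the group is generated by commuting meridians of the single fiber; this is exactly where Remark~\ref{rem:genericCommute} and Lemma~\ref{lem:presentationGeneral} must be combined carefully, since here all generators come from $\SF$ and the relations \ref{R3} must be shown to collapse the group onto its abelianization.
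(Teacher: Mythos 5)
Your overall strategy coincides with the paper's: use Lemma~\ref{lem:exact} to see that the whole group equals the kernel $K_P$ (equivalently, that the fundamental group of a generic fiber surjects onto $\pi_1(\PP^2\setminus C_P)$), use Remark~\ref{rem:genericCommute} to make meridians about $C_P$ central, compute $H_1$ with Lemma~\ref{lem:coprime}, and identify the answer with $G(1;1;k)\cong\ZZ_k$ via Theorem~\ref{thm:OkaUnique}. However, there is a genuine gap exactly at the point you flag as "the main obstacle'': centrality of the meridians about $C_P$ does \emph{not} by itself imply that the group is abelian, and the repair you suggest --- combining the presentation of Lemma~\ref{lem:presentationGeneral} with the relations \ref{R3} --- would not succeed, since nothing in that presentation forces the generators in $\SF$ (loops coming from a generic fiber) to commute with one another; that is precisely what needs to be proved. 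The ingredient you are missing is Lemma~\ref{lemma:meridians-generators}: because $\PP^2$ is simply connected and $C_P$ is irreducible, $\pi_1(\PP^2\setminus C_P)$ is the normal closure of a single meridian $\gamma$ about $C_P$. Since $\gamma$ is central (Remark~\ref{rem:genericCommute} together with the surjectivity coming from Lemma~\ref{lem:exact}), its normal closure is just the cyclic subgroup $\langle\gamma\rangle$, so the group is cyclic, in particular abelian, and Lemma~\ref{lem:coprime} then pins it down as $\ZZ_k$. This is exactly how the paper closes the argument, and it is the step absent from your write-up.

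A secondary slip: the fiber whose fundamental group surjects onto $\pi_1(\PP^2\setminus C_P)$ must be a \emph{generic} one, $F^{-1}(Q)$ with $Q\in\PP^1\setminus(B_F\cup\{P\})$, not $F^{-1}(P)$; the latter is $C_P$ itself and has been removed from the space. Your remaining reductions are correct: $\pi_1^{\orb}\bigl(\PP^1_{(1,\bar m)}\bigr)$ is trivial when $p=q=1$, the generic fiber has degree $k$ so Lemma~\ref{lem:coprime} applies with $d=k$, the purely central claim is automatic once the group is abelian and the quotient trivial, and $G(1;1;k)\cong\ZZ_k$ by Example~\ref{ex:cyclic}.
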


\begin{proof}
	Let $Q\in\PP^1\setminus (B_F\cup\{P\})$. By Lemma~\ref{lem:exact} applied to $F:\PP^2\setminus C_P\to\PP^1\setminus\{P\}$, the inclusion $\iota: F^{-1}(Q)\hookrightarrow \PP^2\setminus C_P$ induces a surjection $\iota_*:\pi_1(F^{-1}(Q))\twoheadrightarrow \pi_1(\PP^2\setminus C_P)$. By Remark~\ref{rem:genericCommute}, every meridian about $C_P$ commutes with the elements of $\mathrm{Im}(\iota_*)=\pi_1(\PP^2\setminus C_P)$, so the meridians about $C_P$ lie in the center of $\pi_1(\PP^2\setminus C_P)$. Since $\pi_1(\PP^2\setminus C_P)$ is generated by the meridians about $C_P$ by Lemma~\ref{lemma:meridians-generators}, $\pi_1(\PP^2\setminus C_P)$ is abelian, and by Lemma~\ref{lem:coprime}, $\pi_1(\PP^2\setminus C_P)\cong\ZZ_k$. The result follows from Lemma~\ref{lem:exact} and Theorem~\ref{thm:OkaUnique}.
\end{proof}

\begin{rem}
The proof of Lemma~\ref{lem:genericFiberNoMultiple} can be applied to study the fundamental group of the complement of irreducible generic fiber-type curves in smooth simply connected projective surfaces. Indeed, suppose that $X$ is a smooth simply connected projective surface, and let $F:X\dashrightarrow S$ be a dominant rational map to a smooth projective curve $S$. Suppose that $F$ has connected generic fibers, that is, there exists a minimal finite set of points $\mathcal B\subset X$ such that $F:X\setminus \mathcal B\to S$ is an admissible map. Since $X$ is simply connected, $S$ must be $\PP^1$. Let $P\in\PP^1\setminus B_F$, and let $C_P:=\overline F^{-1}(P)$. The process of resolution of indeterminacies yields that $\mathcal B\subset C_P$. By Lemma~\ref{lemma:meridians-generators}, $\pi_1(X\setminus C_P)$ is normally generated by a meridian around $C_P$.

If $M_F=\emptyset$, the proof of Lemma~\ref{lem:genericFiberNoMultiple} yields that $\pi_1(X\setminus C_P)$ is abelian, and, since it is normally generated by one element, it must be cyclic.
\end{rem}

The following result (Corollary~\ref{cor:1or0multipleAbelianintro} in the Introduction) is an immediate consequence of Lemmas~\ref{lem:coprime}, \ref{lem:purelycentral} and~\ref{lem:genericFiberNoMultiple}.

\begin{cor}\label{cor:1or0multipleAbelian}
	If an irreducible curve $C$ is the closure of a generic fiber of a pencil $F:\PP^2\dashrightarrow \PP^1$ of degree $d$ with at most one multiple fiber, then $\pi_1(\PP^2\setminus C)\cong\ZZ_d$.
\end{cor}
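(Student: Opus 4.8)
The plan is to deduce this from Theorem~\ref{thm:1fiber} after putting the pencil into the normal form of Condition~\ref{cond}. First I would apply the reductions of Remarks~\ref{rem:Stein} and~\ref{rem:2mult}: performing a Stein factorization I may assume $F$ has connected generic fibers, and after a coordinate change on $\PP^1$ I may assume all multiple fibers lie over $\{[1:0],[0:1]\}$ with the multiplicity over $[1:0]$ at least that over $[0:1]$. Thus $F$ satisfies Condition~\ref{cond}, so it can be written $F=[f_{kp}^q:f_{kq}^p]$ with $p\geq q$ and $\gcd(p,q)=1$, and the given irreducible curve $C$ is the closure $C_P$ of a generic fiber.

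The key elementary observation is that in this normal form the fiber over $[1:0]$ is the $p$-multiple $p\cdot V(f_{kq})$ and the fiber over $[0:1]$ is the $q$-multiple $q\cdot V(f_{kp})$, and by the last bullet of Condition~\ref{cond} there are no further multiple fibers. Since $p\geq q\geq 1$, the hypothesis that $F$ has \emph{at most one} multiple fiber (i.e. $\#M_F\leq 1$) is therefore \emph{equivalent} to $q=1$: if $q>1$ then $p\geq q>1$ and both special fibers would be multiple. Once $q=1$ is established, Theorem~\ref{thm:1fiber} (whose two cases are handled by Lemma~\ref{lem:purelycentral} for $p>1$ and by Lemma~\ref{lem:genericFiberNoMultiple} for $p=q=1$) gives $\pi_1(\PP^2\setminus C)\cong G(p;q;kq)=G(p;1;k)$, and Example~\ref{ex:cyclic} identifies this as the finite cyclic group $\ZZ_{pk}$.

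It remains only to identify the order of this cyclic group with the degree $d=\deg C$. Since $\pi_1(\PP^2\setminus C)$ is now known to be abelian, it coincides with its abelianization $H_1(\PP^2\setminus C;\ZZ)$, which by Lemma~\ref{lem:coprime}, applied to the irreducible curve $C$ (so $r=1$ and the relevant gcd is simply $\deg C$), is isomorphic to $\ZZ_d$. Combining the two computations yields $\pi_1(\PP^2\setminus C)\cong\ZZ_d$, with the incidental identity $d=pk$. The genuine mathematical content all sits in the cited lemmas; the only step requiring care is the translation ``at most one multiple fiber $\Leftrightarrow q=1$'', which relies on correctly tracking the multiplicities of the two distinguished fibers of the normalized pencil and on checking that the Stein factorization and coordinate change preserve both the irreducibility of $C$ and its degree. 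That bookkeeping is where I would be most careful, but it presents no real obstacle.
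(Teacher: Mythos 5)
Your proposal is correct and follows essentially the same route as the paper: the paper derives the corollary as an immediate consequence of Lemma~\ref{lem:coprime} together with Lemmas~\ref{lem:purelycentral} and~\ref{lem:genericFiberNoMultiple} (which jointly constitute Theorem~\ref{thm:1fiber}), exactly the ingredients you invoke. The only difference is that you spell out the normalization to Condition~\ref{cond} (where irreducibility of $C$ in fact forces the Stein factorization to be trivial) and the translation ``at most one multiple fiber $\Leftrightarrow q=1$,'' which the paper leaves implicit.
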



\begin{exam}
\label{exam:a17}
As a non-trivial application of Corollary~\ref{cor:1or0multipleAbelian} we will revisit the
Zariski pair of sextics with an $\A_{17}$-singularity found by Artal in 1994 in his seminal paper
on Zariski pairs (cf.~\cite{Artal-couples}). He proved that such equisingular family consisted of
four connected components $\cF_1,...,\cF_4$, two of which $\cF_3$, $\cF_4$ correspond to irreducible
sextics, whereas the other two correspond to two cubics with maximal order of contact. From the former
families, only one of them, say $\cF_3$ consisted of sextics of torus type $(2,3)$. It is well known that
the fundamental groups of the complement of any two curves in a connected component of an equisingular
family are isomorphic. Denote by $G_i$ the group corresponding to the family of sextics in $\cF_i$.

From our discussion above, the group $G_3$ admits a surjection onto $\ZZ_2*\ZZ_3$, hence it is not abelian.
On the other hand, the family $\cF_4$ consists of irreducible sextics of non-torus type. Artal proved that
the fundamental groups $G_3$ and $G_4$ are not isomorphic by showing that their Alexander polynomials
(an invariant of the fundamental group) do not coincide. In 2005, Eyral-Oka
(\cite{Eyral-Oka-sextics}) showed that $G_4$ is abelian (and hence $G_4\cong\ZZ_6$). They did this
using the Zariski-Van Kampen method, which heavily relies on the explicit calculation of the braid
monodromy of the curve to retrieve a presentation of the fundamental group.

We present an alternative more conceptual (and significantly shorter) proof of $G_4\cong\ZZ_6$
using Corollary~\ref{cor:1or0multipleAbelian}.
Using the equation of a curve in $\cF_4$,
$$
f=(xz^2-y^2z-x^2y)^2-4x(xz-y^2)(y(xz-y^2)-x^3)-(y(xz-y^2)-x^3)^2,
$$
(see~\cite[p. 240]{Artal-couples}) one can check that the tangent line $\ell=V(x)$ is such that
$f+\lambda x^6$ is generically an equisingular deformation of $f$. One can do this by calculating
a Milnor non-degenerated Newton polygon at the singular point $P$. This can be achieved by the
transformation $x\to w:=x+y^2+y^5+4y^8$. The local equation $x^6$ becomes $w^6$ whose monomials
belong to the convex hull of the Newton polygon. This implies that the Milnor number of the singularity
of $f+\lambda x^6$ at $P$ does not change for any $\lambda\in\CC$ (\cite{Kouchnirenko-polyedres}) and
hence this is an equisingular family (\cite{Le-Ramanujam-invariance}).
Since $P$ is the only singularity of $C$, this implies that the curve $C$ is a
generic member of the pencil $[f:x^6]$. Moreover, note that $x^6$ is the only multiple fiber or else
the pencil would not have any irreducible fibers, which contradicts $C$ being irreducible.
Corollary~\ref{cor:1or0multipleAbelian} implies the result.
\end{exam}

\begin{exam}
\label{exam:muconstant}
	Consider $f=0$ a germ at $(\CC^2,0)$ and $\mathfrak{m}$ the maximal ideal at the local ring.
	Recall that the \emph{$\mu$-constant determinacy of $f$ at $0$},
	denoted by $\mu$-$\det(f,0)$, is the smallest positive integer $s$ such that $f+\lambda h$ is $\mu$-constant
	in $\lambda\in [0,\varepsilon)$ for any germ $h\in\mathfrak{m}^s$ and some $\varepsilon>0$ depending on $h$.
	Let's assume $C:=V(f)$ is an irreducible curve with at most one singular point $P$ such that
	$\mu$-$\det(f,P)\leq d$, then $\pi_1(\PP^2\setminus C)=\ZZ_d$.
	The condition $\mu$-$\det(f,P)\leq d$ implies that, given a generic line $\ell=V(g)$ passing through $P$,
	the curve $C_\lambda=V(f+\lambda g^d)$ has generically the same local type of singularity at $P$ as $C$.
	Then $C$ is a generic member of the pencil $[f:g^d]$ which contains exactly one multiple fiber.
	Corollary~\ref{cor:1or0multipleAbelian} implies the result. Note that this relaxes the strict inequality
	condition required by Dimca in~\cite[\S4 Prop. 3.11]{Dimca-singularities}.
\end{exam}

%

Now, we are ready to prove the Main Theorem (Theorem~\ref{thm:newgeneric}).

\begin{proof}[Proof of Theorem~\ref{thm:newgeneric}]
The statement about $\pi_1(\PP^2\setminus C_B)$ follows from induction on the size of $B$: the base case is done in Theorem~\ref{thm:1fiber}, and the induction step follows from  Lemma~\ref{lem:coprime} and Theorems~\ref{thm:additiongeneral} and~\ref{thm:OkaUnique}.

Now, suppose that $V(f_{kp})$ is irreducible, and assume that $\pi_1(\PP^2\setminus V(f_{kp}))\cong \ZZ_{kp}$. Then, $F$ induces an epimorphism
$$
F_*:\pi_1(\PP^2\setminus V(f_{kp}))\to \ZZ_p,
$$
where the target is the orbifold fundamental group of $\PP^1$ with respect to the maximal orbifold structure with respect to $F:\PP^2\setminus V(f_{kp})\to\PP^1\setminus\{[0:1]\}$. The kernel of $F_*$ is thus a purely central subgroup which is isomorphic to $\ZZ_k$. The result about $\pi_1\left(\PP^2\setminus (C_B\cup V(f_{kp}))\right)$ follows by induction on the size of $B$ by applying Lemma~\ref{lem:coprime} and Theorems~\ref{thm:additiongeneral} and ~\ref{thm:OkaUnique}. The result about $\pi_1\left(\PP^2\setminus (C_B\cup V(f_{kq}))\right)$ follows the same steps.
\end{proof}

Lastly, we are going to refine the Addition Theorem~\ref{thm:additiongeneral} for the case of plane curve complements whose
fundamental groups are isomorphic to one of Oka's groups from Definition~\ref{def:Gpqr}. Recall that, by
Corollary~\ref{cor:isoGpqr}, every group satisfying Definition~\ref{def:Gpqr} is isomorphic to $G(s'p';s'q';k'q')$ for some
$p',q',s',k'$ such that $\gcd(p',q')=1$ and $p'\geq q'$. Recall also that, by Proposition~\ref{prop:GpqrImpliesFiber},
every plane curve which satisfies that the fundamental group of its complement is isomorphic to one of Oka's groups is a
fiber-type curve.

\begin{thm}[Addition for plane fiber-type curves]\label{thm:additionPlane}
Let $F=[f_{kp}^q:f_{kq}^p]:\PP^2\dashrightarrow\PP^1$ be a pencil satisfying Condition~\ref{cond}. Let $B\subset\PP^1$
be a finite non-empty set such that $C_Q$ is irreducible for all $Q\in B$. Suppose that
$\pi_1(\PP^2\setminus C_B)\cong G(s'p';s'q';k'q')$, where $\gcd(p',q')=1$ and $p'\geq q'$. Assume moreover that one
of the following hold:
\begin{itemize}
\item $\# B=1$ and the kernel of
$F_*:\pi_1(\PP^2\setminus C_B)\twoheadrightarrow \pi_1^{\orb}(\PP^1_{(1,\bar m)})\cong\ZZ_{p'}*\ZZ_{q'}$ is a purely
central subgroup which is finite cyclic, where $\PP^1_{(1,\bar m)}$ is the maximal orbifold structure on $\PP^1$ with
respect to $F:\PP^2\setminus C_B\to \PP^1\setminus B\subset \PP^1$.
\item $\# B\geq 2$.
\end{itemize}
Then, $s'=\# B$, and for all $P\in \PP^1\setminus (B\cup B_F)$,
$$
\pi_1(\PP^2\setminus C_{B\cup\{P\}})\cong G((s'+1)p';(s'+1)q';k'q').
$$
\end{thm}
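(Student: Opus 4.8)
The plan is to deduce everything from the Addition Theorem~\ref{thm:additiongeneral} together with the characterization of Oka's groups in Theorem~\ref{thm:OkaUnique}. Write $G:=\pi_1(\PP^2\setminus C_B)\cong G(s'p';s'q';k'q')$, let $\bar H:=\pi_1^{\orb}(\PP^1_{(s,\bar m)})$ be the orbifold base of $F\colon \PP^2\setminus C_B\to\PP^1\setminus B$, and let $K_B=\ker F_*$. The first step is to pin down $s'$. Since each $C_Q$ with $Q\in B$ is irreducible, $C_B$ has exactly $\#B$ irreducible components, so Lemma~\ref{lem:coprime} gives that the free part of $H_1(\PP^2\setminus C_B)\cong G^{\mathrm{ab}}$ has rank $\#B-1$. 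On the other hand, the last item of Proposition~\ref{prop:center} gives $G^{\mathrm{ab}}\cong \ZZ_{k'p'q'}\times\ZZ^{s'-1}$. Comparing free ranks forces $s'=\#B$.

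The core of the argument is to show that the extension induced by $F_*$ is purely central, with $K_B\cong\ZZ_{k'}$. For $\#B=1$ this is exactly the standing hypothesis, and the order is forced: pure centrality gives an exact sequence $1\to K_B\to G^{\mathrm{ab}}\to\bar H^{\mathrm{ab}}\to1$ (the remark after Definition~\ref{def:purelycentral}), and since $G^{\mathrm{ab}}\cong\ZZ_{k'p'q'}$ while $\bar H^{\mathrm{ab}}\cong(\ZZ_{p'}*\ZZ_{q'})^{\mathrm{ab}}\cong\ZZ_{p'q'}$, we get $\lvert K_B\rvert=k'$. For $\#B\geq2$ I would derive pure centrality. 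By Corollary~\ref{cor:isoGpqr}, as $G$ is not finite cyclic the subgroup $\ZZ_{k'}$ of Proposition~\ref{prop:center}\eqref{part:center} is characteristic (the center when the canonical quotient $\FF_{s'-1}*\ZZ_{p'}*\ZZ_{q'}$ is non-cyclic, its torsion subgroup otherwise). Except in the degenerate case $\bar H\cong\ZZ$ (handled separately via the Albanese description as in Proposition~\ref{prop:GpqrImpliesFiber}), $\bar H$ is centerless, so $F_*(\ZZ_{k'})\subseteq Z(\bar H)=\{1\}$ and hence $\ZZ_{k'}\subseteq K_B$. Then $\bar H$ is a quotient of $G/\ZZ_{k'}\cong\FF_{s'-1}*\ZZ_{p'}*\ZZ_{q'}$ by a free product of cyclic groups of the same free rank $s'-1$; since by Lemma~\ref{lem:presentationGeneral} the map $F_*$ carries a meridian of each multiple fiber onto a generator of the corresponding torsion factor, the maximality of the orbifold structure forces this quotient to be an isomorphism, so $K_B=\ZZ_{k'}$ is purely central.

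Once Step~2 is in place, the conclusion is immediate. Fix $P\in\PP^1\setminus(B\cup B_F)$. Since $P\notin B_F$, Theorem~\ref{thm:additiongeneral} applies with $A=B$: part~\eqref{item:iso} yields $K_{B\cup\{P\}}\cong K_B\cong\ZZ_{k'}$, and part~\eqref{item:ext} (using Step~2) yields that
\[
1\to \ZZ_{k'}\to \pi_1(\PP^2\setminus C_{B\cup\{P\}})\xrightarrow{F_*}\pi_1^{\orb}(\PP^1_{(s'+1,\bar m)})\to 1
\]
is purely central, where $\pi_1^{\orb}(\PP^1_{(s'+1,\bar m)})\cong \FF_{s'}*\ZZ_{p'}*\ZZ_{q'}$ because adding the generic point $P$ only increases the number of punctures by one and leaves the multiplicities unchanged. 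Finally Lemma~\ref{lem:coprime} shows that the torsion of $H_1(\PP^2\setminus C_{B\cup\{P\}})$ is cyclic, so pure centrality forces it to be $\ZZ_{k'p'q'}$; thus both hypotheses of Theorem~\ref{thm:OkaUnique} hold with $(n,m_1,m_2,r)=(k',p',q',s')$, giving $\pi_1(\PP^2\setminus C_{B\cup\{P\}})\cong G((s'+1)p';(s'+1)q';k'q')$.

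The hard part will be Step~2 in the case $\#B\geq2$: establishing that the kernel of the \emph{given} $F_*$, and not merely of some abstract quotient of $G$, is purely central. The subtlety is that $B$ may meet the multiple-fiber locus $M_F$, in which case the geometric base $\bar H$ appears to lose a torsion factor; what rescues the matching is that the Oka parameters $p',q'$ automatically record only the multiplicities of the multiple fibers lying \emph{outside} $B$ (those inside contributing a trivial $\ZZ_1$), so that $\bar H$ still coincides with the canonical quotient $\FF_{s'-1}*\ZZ_{p'}*\ZZ_{q'}$. Turning this coincidence into a proof — equivalently, showing that the given pencil realizes the intrinsic orbifold structure of $G(s'p';s'q';k'q')$ — is precisely where one must combine the characteristic nature of the center (Corollary~\ref{cor:isoGpqr}), the meridian bookkeeping of Lemma~\ref{lem:presentationGeneral}, and the uniqueness of orbifold fibrations of negative Euler characteristic underlying Proposition~\ref{prop:GpqrImpliesFiber}.
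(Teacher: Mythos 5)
Your skeleton agrees with the paper's proof at the beginning and at the end: $s'=\#B$ via Lemma~\ref{lem:coprime} and Proposition~\ref{prop:center}; the case $\#B=1$ handled by comparing $H_1(\PP^2\setminus C_B)\cong\ZZ_{k'p'q'}$ with $(\ZZ_{p'}*\ZZ_{q'})^{\mathrm{ab}}\cong\ZZ_{p'q'}$ to force $\lvert\ker F_*\rvert=k'$; and the conclusion via Theorem~\ref{thm:additiongeneral}, Lemma~\ref{lem:coprime} and Theorem~\ref{thm:OkaUnique}. All of that is correct. The genuine gap is your Step~2 for $\#B\geq 2$, which is the core of the theorem, and which your own closing paragraph concedes is not actually carried out. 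Deducing $\ZZ_{k'}\subseteq K_B$ from centerlessness of $\bar H$ is fine (outside the degenerate case $\bar H\cong\ZZ$, which you only gesture at), but the next step fails as written: a surjection between free products of cyclic groups of the same free rank need not be injective (e.g.\ $\FF_1*\ZZ_2*\ZZ_3\twoheadrightarrow\FF_1*\ZZ_2$ killing the $\ZZ_3$ factor), so you must prove \emph{both} that the multiplicities of the multiple fibers of $F$ lying outside $B$ are exactly $\{p',q'\}$ \emph{and} that $K_B$ is no larger than $\ZZ_{k'}$. Invoking ``maximality of the orbifold structure'' and the meridian bookkeeping of Lemma~\ref{lem:presentationGeneral} does not do this: maximality only describes which orbifold structure the given pencil induces; it says nothing about how those geometric multiplicities relate to the abstract Oka parameters $p',q'$, and establishing precisely that relation is the content of the theorem. (Your route could in principle be completed, but with input you never invoke: $K_B$ is finitely generated, being the image of the fundamental group of a generic fiber by Lemma~\ref{lem:exact}, and a nontrivial finitely generated normal subgroup of a nontrivial free product has finite index, so $K_B/\ZZ_{k'}$, having infinite quotient $\bar H$, must be trivial. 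Nothing of this Karrass--Solitar type appears in your write-up, and the degenerate case $\bar H\cong\ZZ$ would still need a separate argument.)

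The paper closes this gap with a geometric rigidity idea that is entirely absent from your proposal. Since $\pi_1(\PP^2\setminus C_B)\cong G(s'p';s'q';k'q')$, Proposition~\ref{prop:GpqrImpliesFiber} produces \emph{some} pencil $F'$ satisfying Condition~\ref{cond} whose induced map $F'_*$ has purely central kernel isomorphic to $\ZZ_{k'}$ with quotient $\FF_{s'-1}*\ZZ_{p'}*\ZZ_{q'}$, and such that every irreducible component of $C_B$ is a fiber of $F'$. When $\#B\geq 2$, the pencils $F$ and $F'$ share two distinct members, hence coincide up to a change of coordinates in $\PP^1$; therefore $F_*$ and $F'_*$ agree up to isomorphism in the target and have the same kernel, which transfers all the required properties to the given $F_*$. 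This also explains the asymmetry in the hypotheses that your proposal does not account for: when $\#B=1$ a single shared fiber does not determine the pencil, so the purely central hypothesis must be assumed rather than derived. In short, Steps~1 and~3 and the case $\#B=1$ are right and match the paper, but the case $\#B\geq2$ is missing its key argument.
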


\begin{proof}
The equality $s'=\# B$ follows from Lemma~\ref{lem:coprime} and Proposition~\ref{prop:center}. If $\# B\geq 2$, Proposition~\ref{prop:GpqrImpliesFiber} implies that $C_B$ is a fiber-type curve corresponding to a pencil $F'$ satisfying Condition~\ref{cond} such that the kernel of
$$
F'_*:\pi_1(\PP^2\setminus C_B)\to \pi_1^{\orb}(\PP^1_{(s',(p',q'))})\cong\FF_{s-1}*\ZZ_{p'}*\ZZ_{q'},
$$
is a purely central subgroup $\pi_1(\PP^2\setminus C_B)$ which is isomorphic to $\ZZ_{k'}$, where $\PP^1$ is endowed with the maximal orbifold structure with respect to $F':\PP^2\setminus C_B\to\PP^1$. Moreover, every irreducible component of $C_B$ is a fiber of $F'$. Hence, the pencils $F'$ and $F$ have two common fibers, and thus they must coincide up to change of coordinates in $\PP^1$. In particular, $F_*$ and $F'_*$ coincide up to isomorphism in the target, and thus have the same kernel.

Suppose that $\# B= 1$. Let $l$ be the order of the kernel of
$F_*:\pi_1(\PP^2\setminus C_B)\twoheadrightarrow \pi_1^{\orb}(\PP^1_{(1,\bar m)})\cong\ZZ_{p'}*\ZZ_{q'}$. Since the kernel of
$F_*$ is purely central, $H_1(\PP^2\setminus C_B)$ is an abelian group of order $lp'q'$. However, since
$\pi_1(\PP^2\setminus C_B)\cong G(p';q';k'q')$, Proposition~\ref{prop:center} implies that $k'p'q'=lp'q'$, so $l=k'$.

We have shown that the kernel of $F_*:\pi_1(\PP^2\setminus C_B)\twoheadrightarrow \pi_1^{\orb}\left(\PP^1_{(s',(p',q'))}\right)$
is a purely central subgroup isomorphic to $\ZZ_{k'}$, where $s'=\# B$ and $\PP^1_{(s',(p',q'))}$ is the maximal orbifold
structure on $\PP^1$ with respect to $F:\PP^2\setminus C_{B}\to\PP^1\setminus B$. The result now follows from Lemma~\ref{lem:coprime}, the Addition Theorem~\ref{thm:additiongeneral} and Theorem~\ref{thm:OkaUnique}.
\end{proof}

\begin{rem}\label{rem:EO}
	Example~\ref{ex:Oka} can be obtained in particular from Theorem~\ref{thm:newgeneric}. However, Eyral and Oka provide similar examples in \cite{Eyral-Oka-RjoinType,Eyral-Oka-RjoinTypeII} for which $C$ is not of generic fiber type. Theorem~\ref{thm:additionPlane}/Theorem~\ref{thm:deletion} can be used to create new curves whose fundamental group of the complement is one of Oka's $G(p;q;r)$ groups, namely adding to $C$ a finite union of generic fibers of the corresponding pencil, or removing some fibers of the pencil from the curve.
\end{rem}

\bibliographystyle{amsplain}
\begin{bibdiv}
\begin{biblist}

\bib{Abhyankar-Zariski-algebraic}{misc}{
      author={Abhyankar, S.},
        note={Appendix to Ch. VIII in~\cite{Zariski-algebraic}},
}

\bib{Arapura-fundamentalgroups}{incollection}{
      author={Arapura, D.},
       title={Fundamental groups of smooth projective varieties},
        date={1995},
   booktitle={Current topics in complex algebraic geometry ({B}erkeley, {CA},
  1992/93)},
      series={Math. Sci. Res. Inst. Publ.},
      volume={28},
   publisher={Cambridge Univ. Press, Cambridge},
       pages={1\ndash 16},
         url={https://doi.org/10.2977/prims/1195168852},
      review={\MR{1397055}},
}

\bib{Arapura-geometry}{article}{
      author={Arapura, D.},
       title={Geometry of cohomology support loci for local systems. {I}},
        date={1997},
        ISSN={1056-3911},
     journal={J. Algebraic Geom.},
      volume={6},
      number={3},
       pages={563\ndash 597},
      review={\MR{1487227}},
}

\bib{Artal-couples}{article}{
      author={Artal, E.},
       title={Sur les couples de {Z}ariski},
        date={1994},
        ISSN={1056-3911},
     journal={J. Algebraic Geom.},
      volume={3},
      number={2},
       pages={223\ndash 247},
      review={\MR{1257321}},
}

\bib{ACM-multiple-fibers}{article}{
      author={Artal, E.},
      author={Cogolludo, J.I.},
       title={On the connection between fundamental groups and pencils with
  multiple fibers},
        date={2010},
     journal={J. Singul.},
      volume={2},
       pages={1\ndash 18},
         url={https://doi.org/10.5427/jsing.2010.2a},
      review={\MR{2763015}},
}

\bib{ACM-characteristic}{article}{
      author={Bartolo, E.~Artal},
      author={Cogolludo-Agust{\'\i}n, J.I.},
      author={Matei, D.},
       title={Characteristic varieties of quasi-projective manifolds and
  orbifolds},
        date={2013},
        ISSN={1465-3060},
     journal={Geom. Topol.},
      volume={17},
      number={1},
       pages={273\ndash 309},
         url={http://dx.doi.org/10.2140/gt.2013.17.273},
      review={\MR{3035328}},
}

\bib{Bauer-irrational}{article}{
      author={Bauer, I.},
       title={Irrational pencils on non-compact algebraic manifolds},
        date={1997},
        ISSN={0129-167X},
     journal={Internat. J. Math.},
      volume={8},
      number={4},
       pages={441\ndash 450},
         url={https://doi.org/10.1142/S0129167X97000226},
      review={\MR{1460895}},
}

\bib{Catanese-Fibred}{incollection}{
      author={Catanese, F.},
       title={Fibred {K}\"{a}hler and quasi-projective groups},
        date={2003},
       pages={S13\ndash S27},
         url={https://doi.org/10.1515/advg.2003.2003.s1.13},
        note={Special issue dedicated to Adriano Barlotti},
      review={\MR{2028385}},
}

\bib{Cheniot}{article}{
      author={Cheniot, D.},
       title={Une d\'emonstration du th\'eroeme de {Z}ariski sur les sections
  hyperplanes d'une hypersurface projective et du th\'eoreme de {Van Kampen}
  sur le groupe foundamental du complementaire d'une courbe projective plane},
        date={1973},
     journal={Compositio Mathematica},
      volume={27},
       pages={141\ndash 158},
}

\bib{ji-pau}{article}{
      author={Cogolludo-Agust{\'{\i}}n, J.I.},
       title={Braid monodromy of algebraic curves},
        date={2011},
     journal={Ann. Math. Blaise Pascal},
      volume={18},
      number={1},
       pages={141\ndash 209},
      review={\MR{2830090}},
}

\bib{ji-Eva-Zariski}{unpublished}{
      author={Cogolludo-Agust\'in, J.I.},
      author={Elduque, E.},
       title={On the topology of fiber-type curves: a {Z}ariski pair of affine
  nodal curves},
        date={2023},
        note={available at arXiv: 2306.07359},
}

\bib{ji-Eva-GOMP}{unpublished}{
      author={Cogolludo-Agust\'in, J.I.},
      author={Elduque, E.},
       title={Geometric realizability of epimorphisms to curve orbifold
  groups},
        date={2025},
        note={Available at arXiv: 2507.10508},
}

\bib{ji-Eva-orbifold}{article}{
      author={Cogolludo-Agust\'in, J.I.},
      author={Elduque, E.},
       title={Quasi-projective varieties whose fundamental group is a free
  product of cyclic groups},
        date={2025},
     journal={Rev. Mat. Iberoam.},
      volume={(Online first)},
         url={https://doi.org/10.4171/RMI/1550},
}

\bib{ji-Libgober-mw}{article}{
      author={Cogolludo-Agust{\'{\i}}n, J.I.},
      author={Libgober, A.},
       title={Mordell-{W}eil groups of elliptic threefolds and the {A}lexander
  module of plane curves},
        date={2014},
        ISSN={0075-4102},
     journal={J. Reine Angew. Math.},
      volume={697},
       pages={15\ndash 55},
         url={http://dx.doi.org/10.1515/crelle-2012-0096},
      review={\MR{3281651}},
}

\bib{Deligne-groupe}{article}{
      author={Deligne, P.},
       title={Le groupe fundamental du compl\'ement d'une courbe plane n'ayant
  que des points doubles ordinaires est ab\'elien},
        date={1979Novembre},
     journal={S\'eminaire Bourbaki 32e ann\'ee},
      volume={543},
       pages={1\ndash 10},
}

\bib{Dimca-singularities}{book}{
      author={Dimca, A.},
       title={Singularities and topology of hypersurfaces},
   publisher={Springer-Verlag},
     address={New York},
        date={1992},
      review={\MR{94b:32058}},
}

\bib{Eyral-Oka-sextics}{article}{
      author={Eyral, C.},
      author={Oka, M.},
       title={On the fundamental groups of the complements of plane singular
  sextics},
        date={2005},
        ISSN={0025-5645,1881-1167},
     journal={J. Math. Soc. Japan},
      volume={57},
      number={1},
       pages={37\ndash 54},
         url={http://projecteuclid.org/euclid.jmsj/1160745812},
      review={\MR{2114719}},
}

\bib{Eyral-Oka-RjoinType}{incollection}{
      author={Eyral, C.},
      author={Oka, M.},
       title={On the fundamental groups of non-generic {$\mathbb R$}-join-type
  curves},
        date={2014},
   booktitle={Bridging algebra, geometry, and topology},
      series={Springer Proc. Math. Stat.},
      volume={96},
   publisher={Springer, Cham},
       pages={137\ndash 157},
         url={https://doi.org/10.1007/978-3-319-09186-0_9},
      review={\MR{3297113}},
}

\bib{Eyral-Oka-RjoinTypeII}{article}{
      author={Eyral, C.},
      author={Oka, M.},
       title={On the fundamental groups of non-generic {$\mathbb{R}$}-join-type
  curves, {II}},
        date={2017},
        ISSN={0025-5645,1881-1167},
     journal={J. Math. Soc. Japan},
      volume={69},
      number={1},
       pages={241\ndash 262},
         url={https://doi.org/10.2969/jmsj/06910241},
      review={\MR{3597554}},
}

\bib{Green-Lazarsfled-higher}{article}{
      author={Green, M.},
      author={Lazarsfeld, R.},
       title={Higher obstructions to deforming cohomology groups of line
  bundles},
        date={1991},
        ISSN={0894-0347,1088-6834},
     journal={J. Amer. Math. Soc.},
      volume={4},
      number={1},
       pages={87\ndash 103},
         url={https://doi.org/10.2307/2939255},
      review={\MR{1076513}},
}

\bib{Gromov-fundamental}{article}{
      author={Gromov, M.},
       title={Sur le groupe fondamental d'une vari\'{e}t\'{e}
  k\"{a}hl\'{e}rienne},
        date={1989},
        ISSN={0249-6291},
     journal={C. R. Acad. Sci. Paris S\'{e}r. I Math.},
      volume={308},
      number={3},
       pages={67\ndash 70},
      review={\MR{983460}},
}

\bib{Hillman-Complex}{article}{
      author={Hillman, J.A.},
       title={Complex surfaces which are fibre bundles},
        date={2000},
        ISSN={0166-8641,1879-3207},
     journal={Topology Appl.},
      volume={100},
      number={2-3},
       pages={187\ndash 191},
         url={https://doi.org/10.1016/S0166-8641(98)00085-6},
      review={\MR{1733043}},
}

\bib{Jost-Yau-Harmonic}{article}{
      author={Jost, J.},
      author={Yau, S.T.},
       title={Harmonic mappings and {K}\"ahler manifolds},
        date={1983},
        ISSN={0025-5831,1432-1807},
     journal={Math. Ann.},
      volume={262},
      number={2},
       pages={145\ndash 166},
         url={https://doi.org/10.1007/BF01455308},
      review={\MR{690192}},
}

\bib{Kouchnirenko-polyedres}{article}{
      author={Kouchnirenko, A.~G.},
       title={Poly\`edres de {N}ewton et nombres de {M}ilnor},
        date={1976},
        ISSN={0020-9910,1432-1297},
     journal={Invent. Math.},
      volume={32},
      number={1},
       pages={1\ndash 31},
         url={https://doi.org/10.1007/BF01389769},
      review={\MR{419433}},
}

\bib{Libgober-complements}{incollection}{
      author={Libgober, A.},
       title={Complements to ample divisors and singularities},
        date={2021},
   booktitle={Handbook of geometry and topology of singularities {II}},
   publisher={Springer, Cham},
       pages={501\ndash 567},
         url={https://doi.org/10.1007/978-3-030-78024-1_10},
      review={\MR{4367444}},
}

\bib{Nemethi-fundamental}{article}{
      author={N\'{e}methi, A.},
       title={On the fundamental group of the complement of certain singular
  plane curves},
        date={1987},
        ISSN={0305-0041},
     journal={Math. Proc. Cambridge Philos. Soc.},
      volume={102},
      number={3},
       pages={453\ndash 457},
         url={https://doi.org/10.1017/S0305004100067505},
      review={\MR{906619}},
}

\bib{Nori-Zariski}{article}{
      author={Nori, M.V.},
       title={Zariski's conjecture and related problems},
        date={1983},
        ISSN={0012-9593},
     journal={Ann. Sci. \'{E}cole Norm. Sup. (4)},
      volume={16},
      number={2},
       pages={305\ndash 344},
         url={http://www.numdam.org/item?id=ASENS_1983_4_16_2_305_0},
      review={\MR{732347}},
}

\bib{Oka-some-plane-curves}{article}{
      author={Oka, M.},
       title={Some plane curves whose complements have non-abelian fundamental
  groups},
        date={1975},
        ISSN={0025-5831},
     journal={Math. Ann.},
      volume={218},
      number={1},
       pages={55\ndash 65},
         url={https://doi.org/10.1007/BF01350067},
      review={\MR{396556}},
}

\bib{Oka-genericRjoin}{article}{
      author={Oka, M.},
       title={On the fundamental group of the complement of certain plane
  curves},
        date={1978},
        ISSN={0025-5645,1881-1167},
     journal={J. Math. Soc. Japan},
      volume={30},
      number={4},
       pages={579\ndash 597},
         url={https://doi.org/10.2969/jmsj/03040579},
      review={\MR{513071}},
}

\bib{Oka-topology}{incollection}{
      author={Oka, M.},
       title={Topology of abelian pencils of curves},
        date={2009},
   booktitle={Singularities---{N}iigata--{T}oyama 2007},
      series={Adv. Stud. Pure Math.},
      volume={56},
   publisher={Math. Soc. Japan, Tokyo},
       pages={225\ndash 248},
         url={https://doi.org/10.2969/aspm/05610225},
      review={\MR{2604085}},
}

\bib{Orevkov-fundamental}{article}{
      author={Orevkov, S.Y.},
       title={The fundamental group of the complement of a plane algebraic
  curve},
        date={1988},
        ISSN={0368-8666},
     journal={Mat. Sb. (N.S.)},
      volume={137(179)},
      number={2},
       pages={260\ndash 270, 272},
         url={https://doi.org/10.1070/SM1990v065n01ABEH001310},
      review={\MR{971697}},
}

\bib{Orevkov-commutant}{article}{
      author={Orevkov, S.Y.},
       title={The commutant of the fundamental group of the complement of a
  plane algebraic curve},
        date={1990},
        ISSN={0042-1316},
     journal={Uspekhi Mat. Nauk},
      volume={45},
      number={1(271)},
       pages={183\ndash 184},
         url={https://doi.org/10.1070/RM1990v045n01ABEH002331},
      review={\MR{1050939}},
}

\bib{Shimada-fundamental}{article}{
      author={Shimada, I.},
       title={Fundamental groups of algebraic fiber spaces},
        date={2003},
        ISSN={0010-2571,1420-8946},
     journal={Comment. Math. Helv.},
      volume={78},
      number={2},
       pages={335\ndash 362},
         url={https://doi.org/10.1007/s000140300014},
      review={\MR{1988200}},
}

\bib{Shimada-Zariski}{article}{
      author={Shimada, I.},
       title={On the {Z}ariski-van {K}ampen theorem},
        date={2003},
        ISSN={0008-414X},
     journal={Canad. J. Math.},
      volume={55},
      number={1},
       pages={133\ndash 156},
         url={https://doi.org/10.4153/CJM-2003-006-2},
      review={\MR{1952329}},
}

\bib{Thom-ensembles}{article}{
      author={Thom, R.},
       title={Ensembles et morphismes stratifi\'{e}s},
        date={1969},
        ISSN={0002-9904},
     journal={Bull. Amer. Math. Soc.},
      volume={75},
       pages={240\ndash 284},
         url={https://doi.org/10.1090/S0002-9904-1969-12138-5},
      review={\MR{239613}},
}

\bib{Le-Ramanujam-invariance}{article}{
      author={Tr\'ang, Lê~Dung},
      author={Ramanujam, C.~P.},
       title={The invariance of {M}ilnor's number implies the invariance of the
  topological type},
        date={1976},
        ISSN={0002-9327,1080-6377},
     journal={Amer. J. Math.},
      volume={98},
      number={1},
       pages={67\ndash 78},
         url={https://doi.org/10.2307/2373614},
      review={\MR{399088}},
}

\bib{Zariski-problem}{article}{
      author={Zariski, O.},
       title={On the problem of existence of algebraic functions of two
  variables possessing a given branch curve},
        date={1929},
     journal={Amer. J. Math.},
      volume={51},
      number={2},
       pages={305\ndash 328},
      review={\MR{1506719}},
}

\bib{Zariski-algebraic}{book}{
      author={Zariski, O.},
       title={Algebraic surfaces},
   publisher={Springer},
     address={Heidelberg},
        date={1971},
        note={2nd. ed.},
}

\end{biblist}
\end{bibdiv}

\end{document}